\date{October 27, 2014}
\newtheorem{theorem}{Theorem}[section]
\newtheorem{lemma}[theorem]{Lemma}
\newtheorem{proposition}[theorem]{Proposition}
\newtheorem{corollary}[theorem]{Corollary}
\newtheorem{definition}[theorem]{Definition}
\newtheorem{example}[theorem]{Example}
\theoremstyle{remark}
\newtheorem{remark}[theorem]{Remark}
\newcommand{\la}{\langle}
\newcommand{\ra}{\rangle}
\newcommand{\Arg}{\text{Arg}}
\newcommand{\unit}{\textbf{1}}
\newcommand{\Div}{\operatorname{div}}
\newcommand{\cD}{{\mathcal D}}
\newcommand{\cL}{{\mathcal L}}
\newcommand{\cP}{{\mathcal P}}
\newcommand{\cS}{{\mathcal S}}
\newcommand{\CC}{{\mathbb C}}
\newcommand{\NN}{{\mathbb N}}
\newcommand{\QQ}{{\mathbb Q}}
\newcommand{\RR}{{\mathbb R}}
\newcommand{\ZZ}{{\mathbb Z}}
\newcommand{\bk}{{\mathbf{k}}}
\title[Explicit and trace formulas via Poisson-Newton 
formula]{Unified treatment of explicit and trace formulas via Poisson-Newton formula}
\subjclass[2010]{Primary: 11M06. Secondary: 30B50; 11M36}
\keywords{Dirichlet series, Poisson formula, Explicit formula, Trace Formula.}
\author[V. Mu\~{n}oz]{Vicente Mu\~{n}oz}
\address{Facultad de
Matem\'aticas, Universidad Complutense de Madrid, Plaza de Ciencias
3, 28040 Madrid, Spain}
\email{vicente.munoz@mat.ucm.es}
\author[R. P\'{e}rez Marco]{Ricardo P\'{e}rez Marco}
\address{CNRS, LAGA UMR 7539, Universit\'e Paris XIII,
99, Avenue J.-B. Cl\'ement, 93430-Villetaneuse, France}
\email{ricardo@math.univ-paris13.fr}
\begin{document}

\maketitle

\begin{abstract}
  We prove that a Poisson-Newton formula, in a broad sense, is associated to each Dirichlet
series with a meromorphic extension to the whole
complex plane of finite order. These formulas simultaneously generalize the classical Poisson formula and Newton
formulas for Newton sums. Classical Poisson formulas in Fourier analysis,
explicit formulas in number theory and Selberg trace formulas in Riemannian
geometry appear as special cases of our general Poisson-Newton formula. 
\end{abstract}

\noindent \emph{We dedicate this article to Daniel Barsky and Pierre Cartier
 for their interest and constant support.}

\section{Introduction} \label{sec:intro}

All classical Poisson formulas for functions in Fourier analysis result from the general distributional Poisson formula
\begin{equation}\label{eqn:a1}
\sum_{n\in \ZZ} e^{i\frac{2\pi}{\lambda} n t}  =\lambda \sum_{k\in \ZZ}  \delta_{\lambda k} \, ,
\end{equation}
which is an identity of distributions identifying an infinite sum of
exponentials, converging in the sense
of distributions, and a purely atomic distribution. This distributional formula
is related to the simplest finite
Dirichlet series
$$
f(s)=1-e^{-\lambda s} \ .
$$
It is interesting to observe that on the left hand side of (\ref{eqn:a1})
we have an exponential sum 
$$
W(f)=\sum_\rho e^{\rho t} \ ,
$$
where the sum runs over the zeros $\rho_n = \frac{2\pi i}{\lambda} n$, $n\in \ZZ$, of $f$,
and on the right hand side of (\ref{eqn:a1})
we have a sum of atomic masses at the multiples of the fundamental frequency $\lambda$.
One can say that the frequencies associated to the zeros are resonant at the fundamental frequencies. Taking
the Fourier transform we obtain the dual Poisson formula that is of the same form where we exchange zeros
and fundamental frequencies. Thus the fundamental frequencies are also resonant at the zeros.

The main purpose of this article is to show that this type of formulae are 
general and to each meromorphic Dirichlet series $f$
we can associate a
distributional Poisson formula
\begin{equation}\label{eqn:a2}
W(f)=\sum_\rho n_\rho e^{\rho t} =\sum_{\bk} \langle \boldsymbol{\lambda} , \bk\rangle  b_\bk \,\delta_{\langle \boldsymbol{\lambda} , \bk\rangle} \, ,
\end{equation}
where the first sum of exponentials runs over the divisor of $f$, i.e., zeros and poles $\rho$ with multiplicities $n_\rho$, and the second sum runs over non-zero sequences $\bk=(k_1, k_2,\ldots )\in
\NN^\infty$ of non-negative integers, all of them zero but finitely many,
and $\langle \boldsymbol {\lambda} , \bk\rangle = \sum \lambda_j k_j$. The coefficients $b_\bk$ are determined by 
the formula $- \log f(s)= \sum_{\bk} b_\bk e^{-\langle\boldsymbol {\lambda} , \bk\rangle s}$. The
equality holds in $\RR_+^*$. Conversely, we prove that
any such Poisson formula comes from a Dirichlet series.

The distribution
$$
W(f)=\sum_\rho n_\rho e^{\rho t}
$$
is well defined in $\RR_+^*$ and is called the \emph{Newton-Cramer distribution} of $f$.
We name it after Newton because it appears as a distributional
interpolation of the Newton sums to exponents $t\in \RR$, since in the complex
variable\footnote{The variable $z=e^s$ or better $z=e^{-s}$ is the proper variable when
dealing with Dirichlet series.}
$z=e^{s}$ the zeros are the $\alpha =e^{\rho}$ so, for simple zeros such that $\rho-\rho'\not= 2\pi i k$, $k\in \ZZ$,
$$
W(f)(t)=\sum_\alpha \alpha^{ t} \, ,
$$
and for integer values $t=m\in \ZZ$ we recognize (in case of convergence) the Newton sums
 $$
W(f)(m)=S_m=\sum_\alpha \alpha^{m} \, .
$$
There is a precise theorem behind this observation. We show that
our Poisson-Newton formula for a finite Dirichlet series  $f$ with a single fundamental 
frequency is strictly equivalent to
the classical Newton relations. This is the reason why we name also after Newton our general Poisson formulas.

Writing $\rho=i\gamma$ we see that the sum $W(f)$ of the left hand side of (\ref{eqn:a2})
is the Fourier transform of the atomic
Dirac distributions $\delta_\gamma$ and we can formally write
$$
\sum_\gamma n_{i\gamma} \hat \delta_\gamma =\sum_{\bk} \langle \boldsymbol {\lambda} ,
\bk\rangle  b_\bk \,\delta_{\langle \boldsymbol {\lambda} , \bk\rangle} \, .
$$
The form of this formula, relating zeros to fundamental frequencies,
strongly reminds other distributional formulas in other contexts.
In number theory, more precisely in the theory of zeta and $L$-functions,
the same type of identities do appear as ``explicit formulas''
associated to non-trivial zeros of the zeta and other $L$-functions. These \textit{explicit formulas}, when
written in distributional form, reduce to a single distributional relation that identifies a sum of
exponentials associated to the divisor of the zeta or $L$-function and an atomic distribution associated
to the location of prime numbers. Usually the sum runs over non-trivial zeros, and the sum over trivial zeros appears
hidden in other forms as a \textit{Weil functional}, which is classically interpreted as 
corresponding to the ``infinite prime''\footnote{It may be more appropriate to talk of the ``prime'' $p=1$.
Actually, Arakelov theory suggests that what is usually understood as ``prime'' at infinity is better 
understood as ``prime'' 1 (cf.\ the ``field'' with one element).}.
For that reason, Delsarte labeled this formula as ``Poisson formula
with remainder'' (see \cite{D}), the ``remainder'' refers to the sum over the trivial part of the divisor. More precisely, for
the Riemann zeta function, we have in $\RR_+^*$ (see \cite{Lang})
$$
\sum_\rho n_\rho e^{\rho t} +W_0(f) =-\sum_p \sum_{k\geq 1}  \log p \, \delta_{k\log p} \ ,
$$
where the sum on the left runs over the non-trivial (i.e., non-real) zeros $\rho$, and the sum over 
$p$ runs over prime numbers. Conjecturally, the non-trivial zeros are simple, i.e., $n_\rho=1$. The term $W_0(f)$ is the sum over the
trivial (real) divisor and is computable
$$
W_0(f)(t)=-e^t+\sum_{n\geq 1} e^{-2n t} =-e^t + \frac{1}{e^{2t} -1} \ ,
$$
and corresponds to Delsarte ``remainder'', or to the Weil functional of the infinite
prime. Also we have in this case
$$
\sum_\rho n_\rho e^{\rho t}=e^{t/2} V(t) + e^{t/2} V(-t) \ ,
$$
where
$$
V(t)=\sum_{\Re \gamma >0} e^{i\gamma t}  \ ,
$$
is the classical Cramer function, studied by H. Cramer \cite{C}, where $\rho=\frac12+i\gamma$. 
This motivates that
we name our distribution $W(f)$
also after Cramer.

\medskip

In Riemannian geometry, we have the same structure
for the Selberg trace formula for compact surfaces with constant negative curvature. With the relevant
difference that Selberg zeta function is of order $2$, which gives a ``remainder'' of order $2$ also.
Selberg formula relates the length
of primitive geodesics, which play the role of prime
numbers, and the eigenvalues of the Laplacian, which
give the zeros of the Selberg zeta function. 
For non-negative constant curvature, the formulas are of
a different nature and the distribution on the right side are no longer simple atomic Diracs, 
but also higher order derivatives appear \cite{DG}.
It is well known that one of Selberg's motivation was the analogy with the explicit formula in 
Number Theory, that our approach explains. According to B. Conrey \cite{conrey}

\textit{"The trace formula resembles the explicit formula in certain ways. Many researchers have attempted to interpret Weil's explicit formula in terms of Selberg's trace formula."}

In the context of dynamical systems and semiclassical quantization,
we have Gutzwiller trace formula (see \cite{GU}), which relates the periods of the periodic orbits
(frequencies of the zeta function)
of a classical mechanical system to the energy levels (zeros of the zeta function)
of the associated quantum system.

Our goal is to put in the proper context, generalize and
make precise the analogy of Poisson and trace formulas, and derive a general class
of Poisson formulas that contain all such instances. More precisely, to each meromorphic Dirichlet series of
finite order we associate a Poisson-Newton formula. All relevant known formulas can be generated in this way.
On the other hand the fact that \textit{explicit formulas} in number theory and Selberg trace formula can be
seen as a generalization of Newton formulas, seems to be a new interpretation.
It is important to remark that in our general setting the Poisson-Newton formulas are
independent from a possible functional equation for the
Dirichlet series $f$, contrary to what happens in classical formulas (see remark \ref{rem:functional-equation}
for the precise formulation).

\section{Newton-Cramer distribution}\label{sec:newton-cramer}

Let $f$ be a meromorphic function on the complex plane $s\in \CC$ of finite order 
(see \cite{A} for classical results
on meromorphic functions). 
We denote by $(\rho)$ the set of zeros and poles
of $f$, and the integer $n_\rho$ is the multiplicity of $\rho$ (positive for zeros and
negative for poles, with the convention $n_\rho =0$ if $\rho$ is neither a zero nor pole). 
The convergence exponent of $f$ is the minimum
integer $d\geq 0$ such that
 \begin{equation}\label{eqn:pesado}
\sum_{\rho \not= 0} |n_\rho| \, |\rho|^{-d} < +\infty\ .
  \end{equation}
We have $d=0$ if and only if $f$ is a rational function.
We shall assume henceforth that $d\geq 1$.
The order \, $o$ \,  of $f$ satisfies $ d \leq [o]+1$. 
We shall also assume that there is some $\sigma_1\in \RR$ such that $\Re \rho\leq \sigma_1$ for any
zero or pole $\rho$ of $f$.

Associated to the divisor $\Div(f)= \sum n_\rho\,\rho$, we define a
distribution 
 \begin{equation}\label{eqn:N-C0}
  W(f)= \sum n_\rho \, e^{\rho t}
  \end{equation}
on $\RR^*_+=]0,\infty[$, called the Newton-Cramer distribution. 
Moreover, we define a distribution on the whole of $\RR$, vanishing
on $\RR^*_-=]-\infty,0[$. This means that we have to make sense of the structure of $W(f)$ at $0\in \RR$.

We start by fixing the space of distributions we will be working on, which
are the distributions \emph{Laplace transformable} in the terminology of
\cite[Section 8]{Z}. We denote $\cD=C^\infty_0$ so $\cD'$ is the space of
all distributions. The space $\cS$ of $C^\infty$-functions of rapid decay on $\RR$ consists of those 
$\varphi$ such that for any $n,m>0$, $|t^n \frac{d^m}{dt^m} \varphi |\leq C_{nm}$.
The dual space $\cS'$ is the Schwartz space of tempered distributions.
Then we say that a distribution $W\in \cD'$ is (right) Laplace transformable if
its support is included in $]-M,\infty[$, for some $M\in\RR$, and there exists
some $c\in \RR$ such that $W\, e^{-ct}\in \cS'$. The topology is as follows: a
sequence $W_k$ of Laplace transformable distributions converge to some $W$ if
there exists some uniform $M,c$ such that $\text{Supp} (W_k) \subset ]-M,\infty[$
and $W_k \, e^{-ct} \to W\, e^{-ct}$ in $\cS'$. Note also that the space of 
Laplace transformable distributions is stable by differentiation, and differentiation 
is continuous in this space (and the same constant $c$ works for the sequence of derivatives).

To make sense  of (\ref{eqn:N-C0}), we are going to construct explicitly a $d$-th primitive of it
which is a continuous function. For this, we introduce a parameter 
$\sigma \in \CC$ which serves as origin for defining the primitive of $W(f)$. The relevant function is
$$
K_{d,\sigma}(t)=\left ( n_{\sigma} \frac{t^d}{d!}\right ) \,  \emph \unit_{\RR_+} +\sum_{\rho \not= \sigma} 
 \left( \frac{n_\rho}{(\rho-\sigma)^d} e^{(\rho-\sigma)t} \right) \emph \unit_{\RR_+} .
$$
This is a Laplace transformable distribution (see Lemma \ref{lem:1} below).
Set
\begin{equation}\label{eqn:N-C}
W_\sigma(f) = e^{\sigma t} \frac{D^d}{Dt^d} \left( (K_{d,\sigma}(t)-K_{d,\sigma}(0)) \emph \unit_{\RR_+} \right),
\end{equation}
where $\frac{D}{Dt}$ denotes the distributional derivative. The following justifies our definition of the 
Newton-Cramer distribution (\ref{eqn:N-C0}).

\begin{lemma} \label{lem:1}
 For finite sets $A$, consider the family of locally integrable functions
 $$
 \tilde{W}_A(f)=\left( \sum_{\rho \in A} n_\rho e^{\rho t}\right) {\emph\unit}_{\RR_+} \, .
 $$
 There is a family of Laplace transformable distributions $W_{A,\sigma}(f)$ which 
coincides with $\tilde{W}_A(f)$ in $\RR^*$, and which
 converges in $\RR$ (over the filter of finite sets $A$), to the Laplace transformable 
distribution $W_\sigma(f)$ (converging
as Laplace transformable distributions).
\end{lemma}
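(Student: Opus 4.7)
The strategy is to mirror definition (\ref{eqn:N-C}) at the level of partial sums and then pass to the limit using the continuity of differentiation in the space of Laplace transformable distributions. For each finite set $A$ of zeros and poles of $f$ I would set
\[
K_{d,\sigma,A}(t) = \left(n_\sigma \frac{t^d}{d!}\right)\unit_{\RR_+} + \sum_{\rho\in A,\ \rho\neq\sigma}\left(\frac{n_\rho}{(\rho-\sigma)^d}\,e^{(\rho-\sigma)t}\right)\unit_{\RR_+},
\]
a locally integrable function supported in $\RR_+$, and define $W_{A,\sigma}(f)=e^{\sigma t}\,\frac{D^d}{Dt^d}\bigl((K_{d,\sigma,A}-K_{d,\sigma,A}(0))\unit_{\RR_+}\bigr)$. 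Since $A$ is finite this is manifestly a Laplace transformable distribution with support in $\RR_+$.

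\emph{Coincidence on $\RR^*$.} On $\RR^*_-$ both $W_{A,\sigma}(f)$ and $\tilde W_A(f)$ vanish by the factor $\unit_{\RR_+}$. On $\RR^*_+$ the function $K_{d,\sigma,A}-K_{d,\sigma,A}(0)$ is $C^\infty$, so its distributional $d$-th derivative coincides with the classical one; term-by-term differentiation yields $\sum_{\rho\in A} n_\rho\,e^{(\rho-\sigma)t}$, and multiplication by $e^{\sigma t}$ recovers $\tilde W_A(f)$. The subtraction of the constant $K_{d,\sigma,A}(0)$ serves precisely to make the primitive continuous at $0$, so that any extra distributional contributions from differentiating through a jump are concentrated at $\{0\}$ and invisible on $\RR^*$.

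\emph{Limit as $A$ exhausts the divisor.} The key analytic inputs are (a) the summability $\sum_{\rho\neq\sigma}|n_\rho||\rho-\sigma|^{-d}<\infty$, which follows from (\ref{eqn:pesado}) since $|\rho-\sigma|^{-d}\sim|\rho|^{-d}$ at infinity, and (b) the uniform half-plane bound $\Re\rho\le\sigma_1$. Choosing any $c>\sigma_1-\Re\sigma$ gives $|e^{(\rho-\sigma)t}|e^{-ct}\le 1$ for all $t\ge 0$ and all $\rho$, producing the $A$-uniform envelope
\[
|e^{-ct}K_{d,\sigma,A}(t)| \le |n_\sigma|\frac{t^d}{d!}e^{-ct}+\sum_{\rho\neq\sigma}\frac{|n_\rho|}{|\rho-\sigma|^d}
\]
on $[0,\infty)$, so dominated convergence yields $e^{-ct}K_{d,\sigma,A}\to e^{-ct}K_{d,\sigma}$ in $\cS'$, with common support in $\RR_+$ and common weight $c$. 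The constants $K_{d,\sigma,A}(0)$ converge absolutely to $K_{d,\sigma}(0)$, hence $(K_{d,\sigma,A}-K_{d,\sigma,A}(0))\unit_{\RR_+}\to (K_{d,\sigma}-K_{d,\sigma}(0))\unit_{\RR_+}$ as Laplace transformable distributions. Since $d$-fold distributional differentiation and multiplication by the fixed exponential $e^{\sigma t}$ are continuous in this space, $W_{A,\sigma}(f)\to W_\sigma(f)$; en route this also proves that (\ref{eqn:N-C}) actually defines a Laplace transformable distribution.

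The main obstacle is the previous step: one must verify that the support bound $M$ and the exponential weight $c$ appearing in the notion of Laplace-transformable convergence can be chosen uniformly in $A$. Both reduce to inputs (a)--(b) above, namely the finite convergence exponent $d$ and the right half-plane bound $\sigma_1$ on the divisor of $f$, which are exactly the hypotheses imposed at the start of Section~\ref{sec:newton-cramer}.
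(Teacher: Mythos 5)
Your proof is correct and follows essentially the same route as the paper's: you define the truncated primitives $K_{d,\sigma,A}$, obtain $W_{A,\sigma}(f)$ by $d$-fold distributional differentiation after subtracting the value at $0$, observe that the discrepancy with $\tilde W_A(f)$ is supported at $\{0\}$, and pass to the limit using the uniform exponential envelope $c>\sigma_1-\Re\sigma$ together with the summability (\ref{eqn:pesado}) and continuity of differentiation. The only cosmetic difference is that you fold the $n_\sigma t^d/d!$ term into $K_{d,\sigma,A}$ from the start (with $n_\sigma=0$ when $\sigma$ is off the divisor), whereas the paper first treats $\sigma\notin\mathrm{Div}(f)$ and then adds that term as a separate case; your unification is slightly cleaner but not a different argument.
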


\begin{proof}
We prove first the lemma when $\sigma$ is not a zero nor pole of $f$. Let $\alpha=
\sigma_1-\Re \sigma \in \RR$.
We define
 \begin{equation} \label{eqn:Kl}
K_\ell(t)= 
\sum_{\rho } \left( \frac{n_\rho}{(\rho-\sigma)^\ell} e^{(\rho-\sigma)t} \right) \unit_{\RR_+} .
   \end{equation}
(We drop the subscript $\sigma$ from the notation during this proof.)
Then for $\ell \geq d$, $K_\ell$ is absolutely convergent for $t\in \RR_+$, since
$$
\left |  e^{(\rho-\sigma)t} \right | =e^{\Re (\rho-\sigma) t} \leq e^{ \alpha t} \ ,
$$
and
$$
|e^{- \alpha\,  t} K_\ell| \leq \sum_\rho \frac{|n_\rho|}{|\rho-\sigma|^\ell} <\infty \ . 
$$
So  $K_\ell$ is a uniformly convergent on compact subsets of $\RR_+$, and hence it is continuous in $\RR^*$.
The function
 $$
  F_{\ell}(t)=(K_{\ell}(t)-K_{\ell}(0) ) \unit_{\RR_+}
 $$
is a  continuous function on $\RR$, for $\ell \geq d$.

For a finite set $A$, denote by
 $$
 K_{\ell,A}(t)= \sum_{\rho\in A} \left(
 \frac{n_\rho}{(\rho-\sigma)^\ell} e^{(\rho-\sigma)t}\right) \unit_{\RR_+}
 $$
the corresponding sum over $\rho\in A$, and $F_{\ell ,A}(t)=(K_{\ell,A}(t)-K_{\ell,A}(0)) \unit_{\RR_+}$.
Then $F_{\ell,A}\to F_\ell$ uniformly on compact subsets of $\RR$. More precisely,
$|F_{\ell,A}-F_\ell| \leq c_A e^{\alpha t}$ with $c_A \to 0$. In particular,
$e^{-(\alpha+\epsilon) t}F_{\ell,A} \to e^{-(\alpha+\epsilon) t}F_\ell$ in $\cS'$, for some $\epsilon>0$.
On $\RR^*$,
 $$
 \tilde{W}_A(f)=
  \left(\sum_{\rho\in A} n_\rho \, e^{\rho t} \right)
  \unit_{\RR_+} =e^{\sigma t}\frac{d^{d}}{dt^d} F_{d,A}(t) .
$$
We consider
 $$
 {W}_{A,\sigma} (f)= e^{\sigma t}\frac{D^{d}}{Dt^d} F_{d,A} ,
$$
taking the distributional derivative. 

Let $K$ be a smooth function on $\RR$. It is easy to check that
$\frac{D}{Dt} (K \unit_{\RR_+} ) =K' \unit_{\RR_+}  + K(0) \delta_0$,
as distributions in $\cD'$. This formula holds also for Laplace transformable distributions. 
We apply this to $F_{\ell,A}$, using that $K'_{\ell , A}=K_{\ell -1, A}$. We get
\begin{align*}
 &\frac{D^{d}}{Dt^d} F_{d,A} =K_{0,A} (t)  + K_{1,A} (0) \delta_0 + K_{2,A} (0) \delta'_0 +\ldots +K_{d-1,A} (0) \delta_0^{(d-2)} \\ 
 &=K_{0,A} (t)  + \left (\sum_{\rho \in A} \frac{n_\rho}{\rho-\sigma}\right ) \delta_0 + 
\left (\sum_{\rho \in A} \frac{n_\rho}{(\rho-\sigma)^2}\right ) \delta'_0 + \ldots 
+ \left (\sum_{\rho \in A} \frac{n_\rho}{(\rho-\sigma)^{d-1}}\right ) \delta_0^{(d-2)} \ .
\end{align*}
Thus the difference between $\tilde{W}_A(f)$ and $W_{A,\sigma} (f)$ is a distribution supported
at $\{0\}$.

We have the convergence $F_{d,A}\to F_{d}$ as Laplace transformable distributions
(with $c=\alpha+\epsilon$ fixed uniformly).
Then differentiating, we have $W_{A,\sigma}(f)\to W_\sigma(f)$ as Laplace transformable
distributions, where
 $$
W_\sigma(f)= e^{\sigma t}\frac{D^{d}}{Dt^d} F_d\ ,
 $$
which is a Laplace transformable distribution with support on $\RR_+$, as stated.

When $\sigma$ is part of the divisor, then we do the same proof with
\begin{equation*} 
K_\ell(t)= \left ( n_{\sigma} \frac{t^\ell}{\ell !}\right ) \,   \unit_{\RR_+} +
\sum_{\rho \not= \sigma} \left( \frac{n_\rho}{(\rho-\sigma)^\ell} e^{(\rho-\sigma)t} \right) \unit_{\RR_+} ,
\end{equation*}
which adds to $W_\sigma (f)$ a term $n_{\sigma} e^{\sigma t}$.
\end{proof}

\begin{definition} 
 We call $W_\sigma(f)$ defined in (\ref{eqn:N-C}) the 
Newton-Cramer distribution associated to $f$ (with parameter $\sigma\in \CC$).
\end{definition}

We can write
$$
 W(f)=W_\sigma(f)|_{\RR_+^*} 
 = \lim_A \tilde{W}_A(f)|_{\RR_+^*} = \sum_{\rho} n_\rho\,e^{\rho t} \, ,
 $$
as a distribution on $\RR_+^*$. 
Note that $W(f)$ is independent of $\sigma$, since the only
dependence on $\sigma$ of $W_\sigma(f)$ is located at $0$.

\begin{proposition} \label{prop:1}
The distribution $W_{\sigma}(f)$ has support contained in $\RR_+$, and it
is Laplace transformable (with $c>\sigma_1$). 
%
\end{proposition}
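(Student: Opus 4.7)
The plan is to read off everything from the construction of $W_\sigma(f)$ already carried out in Lemma \ref{lem:1}, and only verify the two quantitative points (support in $\RR_+$, Laplace transformable with some $c>\sigma_1$). The support statement is essentially formal: by definition $W_\sigma(f)=e^{\sigma t}\frac{D^d}{Dt^d} F_d$, where $F_d=(K_{d,\sigma}-K_{d,\sigma}(0))\,\unit_{\RR_+}$ has support in $\RR_+$. Since distributional differentiation does not enlarge the support and multiplication by the nowhere vanishing entire function $e^{\sigma t}$ preserves it, $\supp W_\sigma(f)\subset\RR_+$.

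For the Laplace transformability, I would proceed in three short steps, exploiting the estimates already obtained inside the proof of Lemma \ref{lem:1}. First, $F_d$ is a continuous function on $\RR$ vanishing on $\RR_-$, and the uniform bound $|K_{d,\sigma}(t)|\leq C\,e^{\alpha t}$ on $\RR_+$ with $\alpha=\sigma_1-\Re\sigma$ gives $|F_d(t)|\leq C'\,e^{\alpha t}\,\unit_{\RR_+}$. Hence for any $\epsilon>0$ the function $e^{-(\alpha+\epsilon)t}F_d(t)$ is bounded, so it belongs to $\cS'$; in other words $F_d$ is Laplace transformable with constant $\alpha+\epsilon$.

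Second, the paper records that the space of Laplace transformable distributions is stable under differentiation with the \emph{same} constant $c$, so $\frac{D^d}{Dt^d}F_d$ is Laplace transformable with constant $\alpha+\epsilon$. Third, multiplication by $e^{\sigma t}=e^{i\,\Im\sigma\,t}\,e^{\Re\sigma\,t}$ shifts this constant by $\Re\sigma$: the factor $e^{i\,\Im\sigma\,t}$ is a bounded smooth multiplier on $\cS$ and hence preserves $\cS'$, while $e^{\Re\sigma\,t}$ changes the required exponential weight from $c=\alpha+\epsilon$ to $c=\Re\sigma+\alpha+\epsilon=\sigma_1+\epsilon$. Since $\epsilon>0$ is arbitrary, $W_\sigma(f)$ is Laplace transformable with any $c>\sigma_1$, as claimed.

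The only mildly delicate point, and the one I would be most careful to spell out, is the interaction of the exponential weight with the distributional derivative; concretely, the identity $\frac{D}{Dt}(F_d\,e^{-ct})=(\frac{D}{Dt}F_d)\,e^{-ct}-c\,F_d\,e^{-ct}$ in $\cS'$ is what makes the Laplace exponent $c$ survive differentiation. Everything else is a bookkeeping of the constants already appearing in Lemma \ref{lem:1}.
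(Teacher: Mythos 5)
Your argument is correct and follows the same route as the paper's proof: read off the support from the definition $W_\sigma(f)=e^{\sigma t}\frac{D^d}{Dt^d}F_d$ with $F_d$ supported in $\RR_+$, and carry the Laplace exponent $c=\alpha+\epsilon$ (with $\alpha=\sigma_1-\Re\sigma$) through the derivative and the multiplication by $e^{\sigma t}$ to land at $\sigma_1+\epsilon$. Your explicit remark on the Leibniz identity $\frac{D}{Dt}(F_d e^{-ct})=(\frac{D}{Dt}F_d)e^{-ct}-cF_d e^{-ct}$ is precisely the mechanism the paper invokes implicitly by stating that the space of Laplace transformable distributions is stable under differentiation with the same constant $c$.
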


\begin{proof}
By definition, $W_\sigma(f)$ is a Laplace transformable distribution. 
By the proof of Lemma \ref{lem:1}, we have that
$e^{-ct}\, F_d \in \cS'$ with $c=\alpha+\epsilon$, 
$\alpha=\sigma_1- \Re \sigma $, $\sigma_1=\sup \{\Re \rho\}$. 
As $W_\sigma(f)= e^{\sigma t} \frac{D^d}{Dt^d} F_d$,
we have that $e^{-(\Re \sigma)t} e^{-ct} \, W_\sigma(f) \in \cS'$.
So this means that 
 $$
 e^{-(\sigma_1+\epsilon)t} W_\sigma(f) \in \cS'
 $$
This means that 
we can pair $W_\sigma(f)$ with $e^{-st}$ for $\Re s>\sigma_1$,

$$
\langle W_\sigma(f) , e^{-st} \rangle := \langle  e^{-(\sigma_1+\epsilon)t} W_\sigma(f) ,
\lambda (t) e^{-(s-(\sigma_1+\epsilon))t}\rangle \ ,
$$
where $\lambda (t)$ is any infinitely smooth function with support bounded on the left which equals 
$1$ over a neighborhood of the support of $W_\sigma(f)$ (see \cite[p.\ 223]{Z}). 
%
\end{proof}

Let $f(s)$ be a meromorphic function with exponent of convergence $d$, 
and fix $\sigma$ as before. We have
defined a distribution $W_\sigma(f)(t)$ 
supported on $\RR_+$. If we make the change
of variables $t\mapsto -t$, we have the distribution $W_\sigma(f)(-t)$ 
defined as
 $$
 W_\sigma(f)(-t)=  
 (-1)^d e^{-\sigma t} \frac{D^d}{Dt^d} \left( (K_{d,\sigma}(-t)
 -K_{d,\sigma}(0)) \mathbf{1}_{\RR_-} \right).
 $$
 This is independent of $\sigma$ on $\RR^*_-$ and has a contribution at zero dependent on the parameter.
 
\begin{definition} 
We define the symmetric Newton-Cramer distribution
as  $\widehat{W}_\sigma(f)=W_\sigma(f)(t)+  W_\sigma(f)(-t)$.
\end{definition}

The symmetric Newton-Cramer distribution is a distribution supported on the
whole of $\RR$ and symmetric. It satisfies that there is some 
$c>0$ such that $h(t) \widehat{W}_\sigma(f) \in\cS'$, where
$h$ is smooth with $h=e^{-(c+\epsilon)|t|}$ for $|t|\geq 1$.
Note also that the only dependence on $\sigma$ is at zero.

Formally, $W_\sigma(f)(t)=\left( \sum n_\rho e^{\rho t}\right) \unit_{\RR_+}$  
and  $W_\sigma(f)(-t)=\left(\sum n_\rho e^{-\rho t}\right) \unit_{\RR_-}$, so we can write
 $$
 \widehat{W}_\sigma(f)=\sum_\rho n_\rho e^{\rho |t|}.
 $$

\section{Poisson-Newton formula}\label{sec:poisson-newton}

Let $f$ be a meromorphic function on $\CC$ of finite order. Let 
$(\rho)$ be the divisor defined by $f$, and assume that the convergence 
exponent is $d$, that is, (\ref{eqn:pesado}) is satisfied. 
We have the Hadamard factorization of $f$ (see \cite[p.\ 208]{A})
$$
f(s)=s^{n_0} e^{Q_f(s)} \prod_{\rho \not=0 } E_m (s/\rho )^{n_\rho} \ ,
$$
where $m=d-1\geq 0$ is minimal for the convergence of the product with
$$
E_m(z)=(1-z) e^{z+\frac12 z^2 +\ldots +\frac1m z^m} \ ,
$$
and $Q_f$ is a polynomial uniquely defined up to the addition of an integer multiple of $2\pi i$. The genus 
of $f$ is defined as the integer
$$
g=\max (\deg Q_f , m) \ ,
$$
and in general we have $d\leq g+1$ and 
$g\leq o \leq g+1$ (see \cite[p.\ 209]{A}). 

The origin plays no particular role, thus we may prefer 
to use the Hadamard product with origin at some $\sigma \in \CC$,
\begin{equation}\label{eqn:hadamar}
f(s)=(s-\sigma)^{n_\sigma} e^{Q_{f, \sigma} (s)} \prod_{\rho \not=\sigma } E_m 
\left (\frac{s-\sigma}{\rho -\sigma} \right )^{n_\rho} \ .
 \end{equation}
This can be obtained as follows: translate the divisor $(\rho)$ to $(\rho- \sigma)$,
consider the usual Hadamard factorization, and then do the change of 
variables $s\mapsto s-\sigma$. 

Taking the logarithmic derivative of (\ref{eqn:hadamar}), we obtain
 \begin{equation} \label{eqn:psi-psi2}
 \frac{f'(s)}{f(s)}= Q_{f, \sigma}' (s) + G(s),
 \end{equation}
where 
\begin{align*}
 G(s)&=\frac{n_{\sigma}}{s-\sigma}- \sum_{\rho \not=\sigma} n_\rho \left (
  \frac{1}{\rho-s} - \sum_{l=0}^{d-2} \frac{(s-\sigma)^l}{(\rho-\sigma)^{l+1}} \right ) \\
&=\frac{n_{\sigma}}{s-\sigma}+\sum_{\rho  \not=\sigma} n_\rho
  \frac{(s-\sigma)^{d-1}}{(\rho-\sigma)^{d-1}} \frac{1}{s-\rho} \, .
 \end{align*}
is a meromorphic function on $\CC$, which has a simple pole with residue $n_\rho$ at each $\rho$.

Note that $P_{f,\sigma} =-Q_{f,\sigma}'$ is a polynomial of degree $\leq g-1$. We call it
the \emph{discrepancy polynomial\/} of $f$. We have 
$$
P_{f,\sigma} =G-\frac{f'}{f}\, . 
$$

The main result of this section is the following Poisson-Newton formula for
a general meromorphic function $f$ of finite order and with divisor contained in a
left half plane, as above. Denote by $\cL$ the Laplace transform
and by $\cL^{-1}$ the inverse operator.

\begin{theorem} \label{thm:Poisson-Newton}
 Let $f$ be a meromorphic function of finite order with convergence exponent $d$ and
its divisor contained in a left half plane. Fix $\sigma\in \CC$. Let $W_\sigma(f)$
be its Newton-Cramer distribution and 
$P_{f,\sigma}(s)=c_0(\sigma)+c_1(\sigma) s+\ldots +c_{g-1}(\sigma) s^{g-1}$ 
be the discrepancy polynomial.
We have (as distributions on $\RR$),
$$
W_\sigma(f)=\sum_{j=0}^{g-1} c_j(\sigma) \delta_0^{(j)} + \cL^{-1} (f'/f) .
$$
\end{theorem}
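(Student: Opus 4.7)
The plan is to take Laplace transforms of both sides of the claimed identity and invoke injectivity of $\cL$ on Laplace-transformable distributions with a common abscissa of convergence. Using (\ref{eqn:psi-psi2}) together with $P_{f,\sigma}=-Q'_{f,\sigma}$, we have $f'/f = G - P_{f,\sigma}$. Since $\cL(\delta_0^{(j)})(s) = s^j$, the right-hand side of the claimed formula has Laplace transform
\[
P_{f,\sigma}(s) + \frac{f'(s)}{f(s)} = G(s),\qquad \Re s > \sigma_1.
\]
So the theorem reduces to the single identity $\cL(W_\sigma(f))(s) = G(s)$ on the half-plane $\Re s>\sigma_1$.

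To verify this identity I would proceed directly from the definition (\ref{eqn:N-C}). Using $\cL(e^{\sigma t}T)(s) = \cL(T)(s-\sigma)$, and the fact that $\cL((D/Dt)T)(s) = s\,\cL(T)(s)$ for any Laplace-transformable $T$ supported in $\RR_+$ (no boundary term appears, since $\cL$ is evaluated via a cutoff $\eta\equiv 1$ near the support), one obtains
\[
\cL(W_\sigma(f))(s) = (s-\sigma)^d\,\cL(F_d)(s-\sigma).
\]
For the finite truncations $F_{d,A}$ of Lemma \ref{lem:1}, a term-by-term computation gives
\[
\cL\!\left(\frac{n_\rho}{(\rho-\sigma)^d}\bigl(e^{(\rho-\sigma)t}-1\bigr)\unit_{\RR_+}\right)\!(s) = \frac{n_\rho}{(\rho-\sigma)^{d-1}\,s\,(s-(\rho-\sigma))},
\]
together with the separate contribution $\cL(n_\sigma t^d\unit_{\RR_+}/d!)(s) = n_\sigma/s^{d+1}$ for the $\rho=\sigma$ part of $K_{d,\sigma}$. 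Since Lemma \ref{lem:1} gives $F_{d,A}\to F_d$ as Laplace-transformable distributions with uniform abscissa $c=\alpha+\epsilon$, continuity of $\cL$ lets one pass to the limit. Multiplying the resulting series by $(s-\sigma)^d$ and shifting $s\mapsto s-\sigma$ then collapses each contribution into $n_\rho(s-\sigma)^{d-1}/((\rho-\sigma)^{d-1}(s-\rho))$ for $\rho\ne\sigma$ and into $n_\sigma/(s-\sigma)$ for $\rho=\sigma$, which is exactly the expression for $G(s)$ displayed before (\ref{eqn:psi-psi2}).

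The main obstacle is the coexistence of a globally convergent series with individually non-absolutely convergent fragments: one cannot write $s^{d-1}/((\rho-\sigma)^{d-1}(s-\rho))$ as a simple pole plus a polynomial in $s$ term by term, because the partial sums $\sum_\rho n_\rho/(\rho-\sigma)^l$ diverge for $l<d$. The Hadamard regularization already built into $K_{d,\sigma}$, the subtraction of $K_{d,\sigma}(0)$, and the subsequent application of $D^d$ are precisely what bypass this forbidden splitting and produce $G$ directly at the Laplace level. The discrepancy polynomial $P_{f,\sigma}$ then emerges on the right-hand side as the Laplace counterpart of the Hadamard exponential factor $e^{Q_{f,\sigma}(s)}$, serving exactly as the bridge between $G$ and $f'/f$.
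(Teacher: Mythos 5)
Your proposal is correct and follows essentially the same route as the paper: compute $\cL(W_\sigma(f))$ starting from the defining formula (\ref{eqn:N-C}), verify it equals $G(s)$, hence $f'/f + P_{f,\sigma}$, and conclude by injectivity of the Laplace transform. The paper carries out the integration by parts explicitly inside the pairing while you invoke the operational rules $\cL(e^{\sigma t}T)(s)=\cL(T)(s-\sigma)$ and $\cL(DT)(s)=s\,\cL(T)(s)$ and then compute $\cL(F_d)$ termwise via the truncations $F_{d,A}$; the term-by-term Laplace transforms you record and the final collapse to $G$ match the paper's computation exactly (modulo a harmless transposition of ``shift then multiply by $(s-\sigma)^d$'' in your last sentence of the middle paragraph).
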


\begin{proof}
We prove the theorem by taking the right-sided Laplace transform of $W_\sigma(f)$.
Here we have to choose $\Re s>\sigma_1$.
 \begin{align*}
  \cL (W_\sigma(f)) &= \la W_\sigma(f) , e^{-st} \ra_{\RR_+}
  = \left \la \frac{D^{d}}{Dt^d} F_{d}(t) , e^{(\sigma-s)t} \right \ra_{\RR_+} \\
 &=  \int_0^\infty (-1)^d (K_d(t)-K_d(0)) \frac{d^d}{dt^d} e^{(\sigma-s)t} dt  \\
  &= n_{\sigma} (-1)^d \frac{(\sigma -s)^d}{d!}\int_0^{+\infty} t^d e^{(\sigma-s)t} \, dt + \\
&  \quad +\sum_{\rho} \frac{n_\rho}{(\rho-\sigma)^d} (-1)^d (\sigma-s)^d 
  \left( \int_0^{+\infty} e^{(\rho-\sigma)t}e^{(\sigma-s)t} dt  
   - \int_0^{+\infty} e^{(\sigma-s)t} dt  \right) \\
  &= \frac{n_{\sigma}}{s-\sigma}-\sum_{\rho} n_\rho \frac{ (s- \sigma)^d}{(\rho-\sigma)^d} \left( \frac{1}{\rho-s} - \frac{1}{\sigma-s} \right) \\
   &= \frac{n_{\sigma}}{s-\sigma}-\sum_{\rho} n_\rho \frac{ (s- \sigma)^{d-1}}{(\rho-\sigma)^{d-1}}  \frac{1}{\rho-s} 
  =  G(s) 
 = \frac{f'(s)}{f(s)} + P_{f,\sigma}(s)\, .
  \end{align*}

By uniqueness of the Laplace transform for Laplace transformable distributions
(see \cite[Theorem 8.3-1]{Z}), we have
  $$
  W_\sigma(f)= \cL^{-1}(f'/f) + \cL^{-1}(P_{f,\sigma}),
  $$
where $\cL^{-1}(P_{f,\sigma})$ is the inverse Laplace 
transform of the polynomial $P_{f,\sigma}$. This is a
distribution supported at $\{0\}$. If $P_{f,\sigma}(s)=c_0  +c_1 s +\ldots +c_{g-1} s^{g-1}$, then
 $$
\cL^{-1} (P_{f,\sigma})=c_0 \delta_0 +c_1 \delta'_0 +\ldots +c_{g-1} \delta_0^{(g-1)}  \ ,
 $$
where $c_j=c_j(\sigma)$.
\end{proof}

Note in particular that 
  $$
  W(f)=W_\sigma(f)|_{\RR_+^*}= \cL^{-1}(f'/f)|_{\RR_+^*} \, .
  $$

The inverse Laplace transform $\cL^{-1}(F)$ is a well defined distribution of finite order 
when $F$ has polynomial growth on a half plane \cite[Theorem 8.4-1]{Z}. For $f$ a
meromorphic function of finite order whose divisor is contained on a left half plane,
$F=f'/f$ has polynomial growth on a half plane (see \cite{MPM-genus}), hence
$\cL^{-1}(F)$ is well-defined (although this is also clear from
theorem \ref{thm:Poisson-Newton}).

Let us recall how to compute $\cL^{-1}(F)$ from \cite[p.\ 236]{Z}.
Take $m_0$ which is two units more than the order of growth of $F$, that is
$F(s)|s|^{-m_0} \leq C/|s|^2$ for $\Re s\geq  \sigma_2 >0$. Define
 $$
 L(t)=\int_{-\infty}^{+\infty} \frac{F(c+iu)}{(c+iu)^{m_0}} e^{(c+iu)t} \, \frac{du}{2\pi} \, .
  $$
This is a continuous function, which vanishes on $\RR_-$. It is independent of the choice of $c$
(subject to $c>\max\{\sigma_1,\sigma_2\}$ and for $F$ holomorphic).
Then
$$
\cL^{-1}(F) (t) = \frac{D^{m_0}}{Dt^{m_0}} L(t),
$$
which is a distribution of order at most $m_0-2$.

More explicitly, for an appropriate test function $\varphi$, letting $\psi (t)=\varphi (t) e^{ct}$, we have
\begin{equation}\label{eqn:Laplace}
\begin{aligned}
\langle \cL^{-1}(F) , \varphi \rangle & = \langle L(t), (-1)^{m_0} \varphi^{(m_0)}(t)\rangle \\
 &= \int_\RR 
\int_{-\infty}^{+\infty}\frac{F(c+iu)}{(c+iu)^{m_0}} (-1)^{m_0} \varphi^{(m_0)}(t) e^{(c+iu)t} \, \frac{du}{2\pi} dt \\
&= \int_{-\infty}^{+\infty} (-1)^{m_0} \frac{F(c+iu)}{(c+iu)^{m_0}} 
 \left( \int_\RR  \varphi^{(m_0)}(t) e^{(c+iu)t}  dt \right) \, \frac{du}{2\pi}  \\
 &= \int_{-\infty}^{+\infty} \frac{F(c+iu)}{(c+iu)^{m_0}} 
 \left( \int_\RR  (c+iu)^{m_0} \varphi(t) e^{(c+iu)t}  dt \right) \, \frac{du}{2\pi}  \\
 &= \int_{-\infty}^{+\infty} 
  \int_\RR F(c+iu) \varphi(t) e^{(c+iu)t}  dt \, \frac{du}{2\pi}  \\
 &=\int_{-\infty}^{+\infty} F(c+iu)
 \hat \psi(-u) \, \frac{du}{2\pi}  \, ,
\end{aligned}
\end{equation}
doing $m_0$ integrations by parts in the fourth line.

\medskip

We can give a symmetric version of the Poisson-Newton formula for
 $$
 \widehat{W}(f)=\sum n_\rho e^{\rho|t|}.
$$

 \begin{theorem}\label{thm:symmetric-for-general}
 We have, as distributions on $\RR$,
$$
\widehat{W}_\sigma(f)= 2 \sum_{l=0}^{\frac{g-1}{2}} c_{2l}(\sigma) \, \delta_0^{(2l)} + 
 \big( \cL^{-1}(f'/f)(t) + \cL^{-1}(f'/f)(-t) \big).
$$
 \end{theorem}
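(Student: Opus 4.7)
The plan is to deduce the symmetric formula directly from Theorem \ref{thm:Poisson-Newton} by applying the reflection $t \mapsto -t$ and summing. By the very definition, $\widehat{W}_\sigma(f) = W_\sigma(f)(t) + W_\sigma(f)(-t)$, so once we understand how each of the three summands appearing in Theorem \ref{thm:Poisson-Newton} transforms under reflection, the result will follow by linearity.

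First I would record the formula from Theorem \ref{thm:Poisson-Newton},
$$
W_\sigma(f) \;=\; \sum_{j=0}^{g-1} c_j(\sigma)\, \delta_0^{(j)} \;+\; \cL^{-1}(f'/f),
$$
and then form the pullback $W_\sigma(f)(-t)$ term by term. The inverse Laplace transform summand simply becomes $\cL^{-1}(f'/f)(-t)$, which is the object we want on the right-hand side of the statement (it is a bona fide distribution on $\RR$ supported on $\RR_-$, paired with test functions via $\varphi(t) \mapsto \varphi(-t)$). For the atomic piece at the origin, the key computation is the standard identity
$$
\delta_0^{(j)}(-t) \;=\; (-1)^j\, \delta_0^{(j)}(t),
$$
obtained from $\langle T(-\cdot), \varphi \rangle = \langle T, \varphi(-\cdot)\rangle$ together with $\frac{d^j}{dt^j}\varphi(-t) = (-1)^j \varphi^{(j)}(-t)$; evaluating at $0$ produces the sign $(-1)^j$.

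Adding $W_\sigma(f)(t)$ and $W_\sigma(f)(-t)$, the coefficient of $\delta_0^{(j)}$ becomes $c_j(\sigma)\bigl(1+(-1)^j\bigr)$, which vanishes for odd $j$ and equals $2\,c_j(\sigma)$ for even $j$. Rewriting $j=2l$ for the surviving indices, $l$ runs from $0$ to $\lfloor(g-1)/2\rfloor$, exactly the range indicated in the statement, and we obtain
$$
\widehat{W}_\sigma(f) \;=\; 2\sum_{l=0}^{\frac{g-1}{2}} c_{2l}(\sigma)\, \delta_0^{(2l)} \;+\; \bigl(\cL^{-1}(f'/f)(t) + \cL^{-1}(f'/f)(-t)\bigr).
$$

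I do not expect a genuine obstacle: the only subtle point is that $W_\sigma(f)(-t)$ must be interpreted in the sense of Laplace transformable distributions (now right-Laplace transformable on the left half-line), so one should briefly remark that the pullback by $t \mapsto -t$ is continuous on the space defined in Section \ref{sec:newton-cramer} and commutes with the decomposition into the singular part at $0$ and the part supported on $\RR_+^*$. Once this is noted, reflection acts term-wise on the decomposition of Theorem \ref{thm:Poisson-Newton} and the parity argument above immediately yields the claim.
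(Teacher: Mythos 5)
Your proposal is correct and matches the paper's proof: the paper likewise deduces the symmetric formula from Theorem \ref{thm:Poisson-Newton} by reflecting $t\mapsto -t$, using that the reflection sends $\delta_0^{(l)}$ to $(-1)^l\delta_0^{(l)}$, so that odd-index terms cancel and even-index terms double. Your remark on the continuity of the pullback in the space of Laplace transformable distributions is a sensible elaboration of a point the paper leaves implicit.
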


\begin{proof}
 It follows from the definition $\widehat{W}_\sigma(f)(t)={W}_\sigma(f)(t) +
{W}_\sigma(f)(-t)$ and theorem \ref{thm:Poisson-Newton}.
Note that doing the change of variables $t\mapsto -t$ on
$\sum_{l=0}^{g-1} c_{l} \,  \delta_0^{(l)}$, we get
$\sum_{l=0}^{g-1} (-1)^l c_{l} \, \delta_0^{(l)}$.
\end{proof}

Furthermore, we have a parameter version of our main theorem by doing the
change of variables
$s\mapsto \alpha s +\beta$, with $\alpha >0$ and $\beta \in \CC$. 
Take $\sigma'=\frac{\sigma-\beta}{\alpha}$. 
We denote, as a slight abuse of notation, 
$$
 e^{-\frac{\beta}{\alpha}|t|} \widehat{W}_\sigma(f)(t)=
 e^{-\frac{\beta}{\alpha} t} {W}_\sigma(f)(t) + e^{\frac{\beta}{\alpha} t} W_\sigma(f)(-t)\, .
$$
(multiplication of a distribution by a non-smooth function is not defined in 
general, so we have to give an explicit meaning to the left hand side). Note that formally,
$$
e^{-\frac{\beta}{\alpha} |t|} \widehat{W}_\sigma(f)(t/\alpha)
=\sum_\rho n_\rho e^{(\rho-\beta) |t|/\alpha}\, .
$$

 \begin{corollary}\label{cor:thm:symmetric-parameters-general}
 We have the equality of distributions on $\RR$
$$
 e^{-\frac{\beta}{\alpha} |t|} \widehat{W}_\sigma(f)(t/\alpha)= 
  2 \sum_{l=0}^{\frac{g-1}{2}} c_{2l}' \, \delta_0^{(2l)} + 
  \big( e^{-\frac{\beta}{\alpha} t} \cL^{-1}(f'/f)(t/\alpha) + 
 e^{\frac{\beta}{\alpha} t} \cL^{-1}(f'/f)(-t/\alpha) \big),
$$
 for some $c_j'$ explicitly determined in the proof below.
 \end{corollary}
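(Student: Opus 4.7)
The plan is to reduce the corollary to Theorem \ref{thm:symmetric-for-general} applied to the rescaled function $\tilde f(s)=f(\alpha s+\beta)$, and then to translate the result back in terms of $f$. Since $\alpha>0$, the function $\tilde f$ is still meromorphic on $\CC$ with the same order, the same convergence exponent $d$ and the same genus $g$; its divisor is $\{\tilde\rho=(\rho-\beta)/\alpha\}$ with multiplicities $n_\rho$, and it is contained in a left half plane. Set $\sigma'=(\sigma-\beta)/\alpha$, which plays for $\tilde f$ the role that $\sigma$ plays for $f$.

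The first key step is to identify
$$
e^{-\frac{\beta}{\alpha}|t|}\widehat W_\sigma(f)(t/\alpha)=\widehat W_{\sigma'}(\tilde f)(t)
$$
as Laplace transformable distributions on $\RR$. On $\RR_+^*$ and $\RR_-^*$ this is clear from the formal expansions $W(f)=\sum n_\rho e^{\rho t}$ and $W(\tilde f)=\sum n_\rho e^{\tilde\rho t}$; to handle the full distribution I would work directly with the explicit kernel (\ref{eqn:N-C}): a short computation shows that $K_{d,\sigma'}^{\tilde f}(t)=\alpha^d K_{d,\sigma}^{f}(t/\alpha)$, and hence the same identity holds after subtracting the value at zero. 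Applying the $d$-th distributional derivative with its scaling factor $\alpha^{-d}$, multiplying by $e^{\sigma' t}$, and doing the analogous computation after the reflection $t\mapsto -t$, one gets the claimed identification; this simultaneously fixes the precise meaning of the non-smooth multiplier $e^{-\beta|t|/\alpha}$ in the statement.

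The second step is to transport the inverse Laplace transform in $\tilde f'/\tilde f$. Since $\tilde f'(s)/\tilde f(s)=\alpha\,(f'/f)(\alpha s+\beta)$, the standard scaling rule for the Laplace transform (which extends from $\cS'$ to the Laplace transformable framework used in Theorem \ref{thm:Poisson-Newton}) gives
$$
\cL^{-1}(\tilde f'/\tilde f)(t)=e^{-\frac{\beta}{\alpha}t}\,\cL^{-1}(f'/f)(t/\alpha),
$$
and the reflected analogue for the $t\mapsto -t$ term. Applying Theorem \ref{thm:symmetric-for-general} to $\tilde f$ at the parameter $\sigma'$ then yields exactly the right hand side displayed in the corollary.

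The last step is to read off the coefficients $c_j'$. A short calculation using (\ref{eqn:psi-psi2}) and the definition of $G$ gives $G_{\tilde f}(s)=\alpha\,G_f(\alpha s+\beta)$, whence the discrepancy polynomial of $\tilde f$ at $\sigma'$ satisfies
$$
P_{\tilde f,\sigma'}(s)=\alpha\,P_{f,\sigma}(\alpha s+\beta),
$$
and expanding the right hand side in powers of $s$ gives the $c_j'=c_j'(\sigma,\alpha,\beta)$ explicitly in terms of $c_0(\sigma),\ldots,c_{g-1}(\sigma)$. I expect the only genuinely delicate point to be the rigorous identification in the first step, since it forces one to interpret the multiplication of a distribution by the non-smooth weight $e^{-\beta|t|/\alpha}$; once this is settled by falling back on the kernel formula (\ref{eqn:N-C}), the remainder of the argument is essentially the functorial behaviour of $W$, $P$ and $\cL^{-1}$ under affine change of variable.
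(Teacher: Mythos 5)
Your proposal is correct and follows essentially the same route as the paper: reduce to $\tilde f(s)=f(\alpha s+\beta)$, identify $W_{\sigma'}(\tilde f)(t)=e^{-\beta t/\alpha}W_\sigma(f)(t/\alpha)$ from the definition \eqref{eqn:N-C}, obtain the scaling of $\cL^{-1}(f'/f)$ from \eqref{eqn:Laplace}, get $P_{\tilde f,\sigma'}(s)=\alpha P_{f,\sigma}(\alpha s+\beta)$, and apply Theorem~\ref{thm:symmetric-for-general} to $\tilde f$. The only cosmetic difference is that you justify the transformation rule for the discrepancy polynomial via $G_{\tilde f}(s)=\alpha G_f(\alpha s+\beta)$ and \eqref{eqn:psi-psi2}, whereas the paper invokes $Q_{\tilde f,\sigma'}(s)=\text{const}+Q_{f,\sigma}(\alpha s+\beta)$; these are equivalent.
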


\begin{proof}
Consider the meromorphic function $g(s)=f(\alpha s+\beta)$, which
has all its zeros in a half-plane. The zeroes of $g$ are $\left( 
(\rho-\beta)/\alpha\right)$. 
By the definition (\ref{eqn:N-C}), we have
 $$
 W_{\sigma'}(g)(t)= e^{-\frac{\beta}{\alpha} t}  W_\sigma(f)(t/\alpha).
 $$
The discrepancy polynomials satisfy
$Q_{g,\sigma'}(s)= \textit{constant} + Q_{f,\sigma}(\alpha s+\beta)$. Therefore
 $$
 P_{g,\sigma'}(s)= \alpha P_{f,\sigma}(\alpha s+\beta)
 $$
Write $c_0'+c_1's+\ldots +c_{g-1}' s^{g-1}= 
\alpha(c_0+c_1 (\alpha s+\beta) +\ldots +c_{g-1} (\alpha s+\beta) ^{g-1})$. 
The penultimate line of equation (\ref{eqn:Laplace}) implies that 
 $$
 \cL^{-1}(g'/g)(t) = e^{-\frac{\beta}{\alpha} t} \cL^{-1}(f'/f)(t/\alpha).
 $$
Then theorem \ref{thm:symmetric-for-general} applied to $g$ implies the result.
\end{proof}

We end up with an application for a real analytic function. Note that
for a non-holomorphic function $h$, with polynomial decay in the right half-plane,
the Laplace transform depends on the line of integration. We denote
$ \cL^{-1}_\beta(h)$ for the integration along $\Re s=\beta$, with $\beta$ large enough.

 \begin{corollary}\label{cor:symmetric-parameters-general}
 For a real analytic function $f$ and $\beta\in \RR$ to the right of all zeroes of $f$, we have
as distributions on $\RR$,
$$
 e^{-\frac{\beta}{\alpha} |t|} \widehat{W}_\sigma(f)(t)= 
  2 \sum_{l=0}^{\frac{g-1}{2}} c_{2l}'  \, \delta_0^{(2l)} + 
  e^{-\frac{\beta}{\alpha} t} \cL^{-1}_\beta  \left( 2 \Re(f'/f)\right) (t/\alpha).
$$
 \end{corollary}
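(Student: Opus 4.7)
The plan is to reduce this to Corollary~\ref{cor:thm:symmetric-parameters-general}, whose right-hand side reads
$$
2\sum_{l=0}^{(g-1)/2} c'_{2l}\,\delta_0^{(2l)} + \Bigl( e^{-\frac{\beta}{\alpha}t}\cL^{-1}(f'/f)(t/\alpha) + e^{\frac{\beta}{\alpha}t}\cL^{-1}(f'/f)(-t/\alpha) \Bigr).
$$
With the constant term already matching, the proof reduces to the distributional identity
$$
\cL^{-1}_\beta(2\Re F)(u) \;=\; \cL^{-1}(F)(u) + e^{2\beta u}\cL^{-1}(F)(-u),
$$
obtained after the substitution $u = t/\alpha$ and cancellation of the common factor $e^{-\beta u}$, where $F = f'/f$.

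Real analyticity of $f$ gives $F(\bar s) = \overline{F(s)}$, so on the vertical line $\Re s = \beta$ the symbol decomposes as $2\Re F(\beta+iu) = F(\beta+iu) + F(\beta-iu)$. Plugging this into the regularized formula~(\ref{eqn:Laplace}) that defines $\cL^{-1}_\beta$ splits the result into two pieces. The first piece, $\cL^{-1}_\beta(F)$, agrees with $\cL^{-1}(F)$ since $F$ is meromorphic with all singularities strictly to the left of $\Re s = \beta$, so the line of integration may be shifted back without changing the distribution. For the second piece, corresponding to $F(\bar s)$, I perform the change of variable $u \mapsto -u$ in the defining pairing; tracking the exponential factor, $e^{(\beta+iu)t}$ becomes $e^{(\beta-iv)t} = e^{2\beta t}\,e^{-(\beta+iv)t}$, so the resulting integral is precisely the pairing representing $e^{2\beta t}\,\cL^{-1}(F)(-t)$. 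The product is well defined as a distribution, because $\cL^{-1}(F)(-t)$ is supported on $\RR_-$ and $e^{2\beta t}$ is smooth.

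Summing the two pieces gives the required identity and completes the reduction, the constants $c'_{2l}$ being inherited unchanged from the previous corollary. The main obstacle I anticipate is performing the substitution $u \mapsto -u$ rigorously rather than formally under the integral: one must work with the regularization $F(\beta+iu)/(\beta+iu)^{m_0}$, pair $\cL^{-1}_\beta(F(\bar s))$ against a test function $\varphi$ through the auxiliary $\psi(t) = \varphi(t)e^{\beta t}$ as in~(\ref{eqn:Laplace}), and verify that the composed effect on $\hat\psi$ of the change of variable together with the pullback $t \mapsto -t$ yields the factor $e^{2\beta t}$ once the $m_0$ integrations by parts are undone. This is a bookkeeping verification, after which the stated formula follows directly.
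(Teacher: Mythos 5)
Your proof is correct and follows essentially the same route as the paper: both reduce to the scalar identity $\cL^{-1}(F)(t)+e^{2\beta t}\cL^{-1}(F)(-t)=\cL^{-1}_\beta(2\Re F)(t)$ for $F=f'/f$, both use real analyticity to write $2\Re F(\beta+iu)=F(\beta+iu)+F(\beta-iu)$, and both establish the identity by the change of variable $u\mapsto -u$ (equivalently $t\mapsto -t$) inside the regularized pairing of equation~(\ref{eqn:Laplace}). The paper organizes the computation by pairing each summand on the left with a test function and adding, whereas you split the symbol $2\Re F$ on the right first; these are the same calculation read in opposite directions.
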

 
\begin{proof}
We only have to prove that, for a real analytic function $F$, 
and $\gamma$ to the right of the zeroes, we have
$$
  e^{-\gamma t} \cL^{-1}(F)(t) + 
 e^{\gamma t} \cL^{-1}(F)(-t) = 
  e^{-\gamma t} \cL^{-1}_\gamma  \left( 2 \Re F\right) (t).
  $$

By (\ref{eqn:Laplace}), we have
 $$
 \la e^{-c t} \cL^{-1}(F)(t), \varphi(t) \ra= \int_{-\infty}^{+\infty} 
F(c+i u) \hat\varphi (-u) \frac{du}{2\pi}
 $$
Analogously,
\begin{align*}  
  \la e^{c t} \cL^{-1}(\overline F)(-t),\varphi(t)\ra &= 
  \la e^{-c t} \cL^{-1}(\overline F)(t),\varphi(-t)\ra 
  =  \int_{-\infty}^{+\infty} \overline{F}(c+i u) \overline{\hat\varphi(-u)}  \frac{du}{2\pi} \\
  &=  \int_{-\infty}^{+\infty} \overline{F(c-i u)} \hat\varphi (u) \frac{du}{2\pi}
  =  \int_{-\infty}^{+\infty} \overline{F(c+i v)} \hat\varphi (-v) \frac{dv}{2\pi} \, .
 \end{align*}
 Adding both, we get
  $$
 \int_{-\infty}^{+\infty} 2 \left( \Re F(c+iv ) \right) \hat\varphi (-v) \frac{dv}{2\pi}  =
 \la e^{-ct} \cL_c^{-1}\left( 2 \Re F \right) ,\varphi(t)\ra,
  $$
 as required.
  \end{proof}

For later use, we also need to 
determine the relation between $Q_{f,\sigma}$ and $Q_{f,0}=Q_f$. In particular,  
the relation between the coefficients $c_0(\sigma)$ and $c_0(0)=c_0$. 
Let $f$ be of finite order and consider the Hadamard factorization of $f$
$$
f(s)=s^{n_0} e^{Q_f(s)} \prod_{\rho \not=0 } E_m (s/\rho )^{n_\rho} \ ,
$$
and the corresponding Hadamard factorization centered at $\sigma \in\CC$,
$$
f(s)=(s-\sigma)^{n_\sigma} e^{Q_{f,\sigma}(s)} \prod_{\rho \not=\sigma } E_m \left(\frac{s-\sigma}{\rho-\sigma} 
\right)^{n_\rho} \, .
$$

\begin{lemma} For $d=2$ we have
 \begin{align} \label{eqn:c4}
  c_0(\sigma)=c_0+\frac{n_0}{\sigma} + \frac{n_\sigma}{\sigma} + \sum_{\rho\neq 0,\sigma} n_\rho 
\frac{-\sigma}{\rho(\rho-\sigma)}
 \end{align}
\end{lemma}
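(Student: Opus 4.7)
The strategy is to derive the identity by comparing the two Hadamard factorizations of $f$, the one centered at $0$ and the one centered at $\sigma$, via their logarithmic derivatives. Since $d=2$ gives primary factor exponent $m=1$, a direct computation yields
\[
\frac{d}{ds}\log E_1\!\bigl((s-a)/(\rho-a)\bigr) = \frac{1}{s-\rho} + \frac{1}{\rho-a},
\]
and under the hypothesis $d=2$ both resulting series for $f'/f$ converge absolutely.

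I would then equate the two expressions
\[
\frac{n_0}{s} + Q_f'(s) + \sum_{\rho\neq 0} n_\rho\!\left[\frac{1}{s-\rho}+\frac{1}{\rho}\right] = \frac{n_\sigma}{s-\sigma} + Q_{f,\sigma}'(s) + \sum_{\rho\neq \sigma} n_\rho\!\left[\frac{1}{s-\rho}+\frac{1}{\rho-\sigma}\right],
\]
and split each sum into its common part indexed by $\rho\notin\{0,\sigma\}$ plus the single ``boundary term'' absent from the other, namely $n_\sigma[\tfrac{1}{s-\sigma}+\tfrac{1}{\sigma}]$ missing from the first sum and $n_0[\tfrac{1}{s}-\tfrac{1}{\sigma}]$ missing from the second. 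After this splitting the polar terms $\frac{n_0}{s}$ and $\frac{n_\sigma}{s-\sigma}$ cancel against their counterparts, and the algebraic identity
\[
\frac{1}{\rho}-\frac{1}{\rho-\sigma}=\frac{-\sigma}{\rho(\rho-\sigma)}
\]
collapses the remaining shared sum into the form appearing in the statement. The genus-one case (which underlies the $d=2$ setting of the lemma) forces $Q_f'$ and $Q_{f,\sigma}'$ to be constants, and the definition $P_{f,\sigma}=-Q_{f,\sigma}'$ converts the resulting relation between $Q_{f,\sigma}'$ and $Q_f'$ into the asserted relation between $c_0(\sigma)$ and $c_0$.

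The main subtlety is the careful bookkeeping of the asymmetry between the two sums: one is indexed by $\rho\neq 0$, the other by $\rho\neq\sigma$, and the single missing term in each is exactly what generates the $n_0/\sigma$ and $n_\sigma/\sigma$ contributions in the final formula. Convergence is not an issue, since the hypothesis $d=2$ delivers absolute convergence of $\sum_\rho n_\rho/(\rho(\rho-\sigma))$, so no conditional summation or Abel-type regrouping is required; once the boundary terms are correctly identified, the derivation reduces to a purely algebraic rearrangement. A secondary point to verify is that the argument goes through uniformly whether or not $0$ and $\sigma$ actually lie in the divisor, since the convention $n_\rho=0$ for $\rho$ not a zero or pole makes the corresponding boundary contributions vanish automatically.
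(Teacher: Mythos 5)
Your proposal is essentially the paper's own proof: both equate the logarithmic derivatives of the Hadamard factorizations centered at $0$ and at $\sigma$, isolate the two boundary terms $\rho=0$ and $\rho=\sigma$, and cancel the shared singular parts; your partial-fraction form $\frac{1}{s-\rho}+\frac{1}{\rho-\sigma}$ is algebraically identical to the paper's $\frac{s-\sigma}{(\rho-\sigma)(s-\rho)}$, and the convergence remark is correct since $d=2$ gives $\sum |n_\rho|\,|\rho|^{-2}<\infty$. One caution: carrying your computation to the end yields $Q'_{f,\sigma}-Q'_f=\frac{n_0}{\sigma}+\frac{n_\sigma}{\sigma}+\sum_{\rho\neq 0,\sigma} n_\rho\frac{-\sigma}{\rho(\rho-\sigma)}$, and since $P=-Q'$ this gives $c_0-c_0(\sigma)$ (not $c_0(\sigma)-c_0$) on the left; so the sign in the displayed identity is reversed — a discrepancy that is in fact present in the paper's own statement (as one can check against the later use for $\chi_0(s)=\pi^{-s/2}\Gamma(s/2)$ in Section 7, where $c_0(\chi_0,\sigma)=\tfrac{\log\pi}{2}-\tfrac12\psi(\sigma/2)$ requires the opposite sign). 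Your phrase ``converts into the asserted relation'' glosses over this and should be made explicit.
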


\begin{proof}
We need to understand the difference between these two factorizations. We take logarithmic derivatives to get
 \begin{align*}
  & \frac{n_\sigma}{s-\sigma} + Q'_{f,\sigma} +  \sum_{\rho\neq 0,\sigma} n_\rho 
\frac{(s-\sigma)^m}{(\rho-\sigma)^m} \frac{1}{s-\rho}
 + n_0 \frac{(s-\sigma)^m}{(-\sigma)^m} \frac{1}{s} = \\
& = \frac{n_0}{s} + Q'_{f} +  \sum_{\rho\neq 0,\sigma} n_\rho 
\frac{s^m}{\rho^m} \frac{1}{s-\rho}
 + n_\sigma \frac{s^m}{\sigma^m} \frac{1}{s-\sigma}
 \end{align*}
Therefore
 \begin{align*}
  Q'_{f,\sigma}  -   Q'_{f} = & n_0\frac{(-\sigma)^m -(s-\sigma)^m}{(-\sigma)^m s} + n_\sigma
\frac{s^m -\sigma^m}{\sigma^m (s-\sigma)}  \\
&+ \sum_{\rho\neq 0,\sigma} n_\rho 
\frac{s^m(\rho-\sigma)^m- (s-\sigma)^m \rho^m}{\rho^m(\rho-\sigma)^m} \frac{1}{s-\rho}
 \end{align*}
For $m=1$ this reduces to 
 $$
Q'_{f,\sigma}  -   Q'_{f} = \frac{n_0}{\sigma} + \frac{n_\sigma}{\sigma} + \sum_{\rho\neq 0,\sigma} n_\rho 
\frac{-\sigma}{\rho(\rho-\sigma)}\ .
$$
\end{proof}

\section{Dirichlet series}\label{sec:dirichlet}

We consider a non-constant Dirichlet series
 \begin{equation}\label{eqn:1}
 f(s)=1+\sum_{n\geq 1} a_n \ e^{-\lambda_n s} \ ,
 \end{equation}
with $a_n \in \CC$ and
$$
0< \lambda_1 < \lambda_2 < \ldots
$$
with  $\lambda_n \to +\infty$ or $(\lambda_n)$ is a finite set (equivalently, take
the sequence $(a_n)$ with all but finitely many elements being zero). Suppose
that we have a half
plane of absolute convergence (see \cite{HR} for background on Dirichlet series), i.e., for some $\bar \sigma \in \RR$ we have
$$
\sum_{n\geq 1} |a_n | \ e^{-\lambda_n \bar \sigma} <+\infty \, .
$$
It is classical \cite[p.\ 8]{HR} that
$$
\bar \sigma=\limsup \frac{\log (|a_1|+|a_2|+\ldots +|a_n|)}{\lambda_n} \ .
$$

The Dirichlet series (\ref{eqn:1}) is therefore absolutely and uniformly
convergent on right half-planes $\Re s \geq \sigma$, for any $\sigma >\bar \sigma$.

We assume that $f$ has a meromorphic extension of finite order
to all the complex plane $s\in \CC$. We denote by $(\rho)$ the set of zeros and poles
of $f$, and the integer $n_\rho$ is the multiplicity of $\rho$. We have, uniformly on $\Re s$,
$$
\lim_{\Re s \to +\infty} f(s) =1 \ ,
$$
thus 
$$
\sigma_1 =\sup_\rho \Re \rho <+\infty \ ,
$$
so $f(s)$ has neither zeros nor poles on the half plane $\Re s >  \sigma_1$. 
Sometimes in the applications $\sigma_1$ is a pole of $f$ because when the coefficients $(a_n)$ are real and positive 
then $f$ contains a singularity at $\bar \sigma$ by a classical theorem of Landau (see \cite[Theorem 10]{HR}). 
The singularity is necessarily a pole by our assumptions, and in general $\sigma_1 =\bar \sigma$.

On the half plane $\Re s >  \sigma_1$, $\log f(s)$ is well
defined taking the principal branch of the logarithm. Then we can
define the coefficients $(b_{\bk})$ by 
\begin{equation} \label{eqn:bn}
-\log f(s)=-\log \left ( 1+ \sum_{n\geq 1} a_n \ e^{-\lambda_n s}\right )
=\sum_{\bk \in \Lambda} b_{\bk} \, e^{-\langle \boldsymbol{\lambda} , \bk \rangle s}
 \ ,
 \end{equation}
where $\Lambda=\{ \bk=(k_n)_{n\geq 1} \, | \, k_n \in \NN, ||\bk||=\sum | k_n |<\infty, ||\bk|| \geq 1\}$,
and
$\langle \boldsymbol{\lambda} , \bk \rangle = \lambda_1k_1+\ldots + \lambda_{l}k_{l}$, where
$k_n=0$ for $n>l$.
Note that the coefficients $(b_{\bk})$ are polynomials on the $(a_n)$. More precisely, we have
\begin{equation} \label{eqn:bs}
 b_\bk= \frac{(-1)^{||\bk||}}{||\bk||} \, \frac{||\bk|| !}{\prod_j k_j!}\, \prod_j a_j^{k_j}\, .
\end{equation}

Note that if the $\lambda_n$ are $\QQ$-dependent then there are repetitions in
the exponents of (\ref{eqn:bn}).

The main result is the following Poisson-Newton formula associated to the Dirichlet series $f$.

\begin{theorem}  \label{thm:main}
 As Laplace transormable distributions in $\RR$ we have
 $$
  W_\sigma (f)= \sum_{k=0}^{g-1} c_k(\sigma) 
 \delta_0^{(k)} + \sum_{\bk \in \Lambda} \langle \lambda , \bk
 \rangle \, b_{\bk} \ \delta_{\langle \boldsymbol{\lambda} ,\bk\rangle } \, .
 $$
\end{theorem}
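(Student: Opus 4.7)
The plan is to combine Theorem \ref{thm:Poisson-Newton} with an explicit computation of $\cL^{-1}(f'/f)$ for a Dirichlet series. Theorem \ref{thm:Poisson-Newton} already yields
\[
  W_\sigma(f) = \sum_{k=0}^{g-1} c_k(\sigma)\, \delta_0^{(k)} + \cL^{-1}(f'/f),
\]
so the task reduces to identifying $\cL^{-1}(f'/f)$ with $\sum_{\bk \in \Lambda} \la \boldsymbol{\lambda}, \bk \ra\, b_\bk\, \delta_{\la \boldsymbol{\lambda}, \bk \ra}$.

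First I would establish a Dirichlet-type expansion of the logarithmic derivative. Since $f(s) \to 1$ uniformly as $\Re s \to +\infty$, there is some $\sigma_2 > \sigma_1$ with $|f(s)-1|<1$ on $\Re s \ge \sigma_2$. On that half-plane the definition (\ref{eqn:bn}) of $b_\bk$ can be read off by substituting the Dirichlet series for $f-1$ into the Taylor series of $-\log(1+x)$ and applying the multinomial theorem, which also yields the explicit formula (\ref{eqn:bs}); the resulting series converges absolutely and uniformly on every $\Re s \ge \sigma_2 + \epsilon$. A Dirichlet series can be differentiated term by term inside its half-plane of absolute convergence, so
\[
  \frac{f'(s)}{f(s)} = \sum_{\bk \in \Lambda} \la \boldsymbol{\lambda}, \bk \ra\, b_\bk\, e^{-\la \boldsymbol{\lambda}, \bk \ra s},
\]
again absolutely and uniformly on $\Re s \ge \sigma_2 + \epsilon$.

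Next I set $T := \sum_{\bk \in \Lambda} \la \boldsymbol{\lambda}, \bk \ra b_\bk\, \delta_{\la \boldsymbol{\lambda}, \bk \ra}$. This is a well-defined distribution on $\RR$: the exponents satisfy $\la \boldsymbol{\lambda}, \bk \ra \ge \lambda_1 \|\bk\|$, so only finitely many atoms appear on any bounded interval, and $\supp T \subset [\lambda_1, +\infty)$. Truncations $T_N$ to finite subfamilies of $\Lambda$ are finite sums of Diracs whose Laplace transforms are exactly the partial sums of the Dirichlet series just displayed, so $\cL(T_N) \to f'/f$ uniformly on $\Re s \ge \sigma_2 + \epsilon$. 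Upgrading this to convergence $T_N \to T$ in the space of Laplace transformable distributions, and then applying the uniqueness of the Laplace transform (\cite[Theorem 8.3-1]{Z}, already used in the proof of Theorem \ref{thm:Poisson-Newton}), one concludes $T = \cL^{-1}(f'/f)$, and the theorem follows.

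The main obstacle I expect is precisely that last upgrade: verifying that $T$ and its truncations $T_N$ are Laplace transformable in the topology of Section \ref{sec:newton-cramer}, uniformly in $N$, i.e.\ that there is a single constant $c$ with $e^{-ct} T_N$ bounded in $\cS'$ as $N$ grows. The natural choice is $c = \sigma_2 + \epsilon$, and the required uniform estimate should follow from absolute convergence on the line $\Re s = c$ together with the elementary formula $\la \delta_\alpha,\varphi\ra = \varphi(\alpha)$. Once this uniformity is in hand, continuity of $\cL^{-1}$ in the Laplace-transformable topology converts the already established convergence of $\cL(T_N)$ into convergence of $T_N$, and the proof closes by substitution into Theorem \ref{thm:Poisson-Newton}.
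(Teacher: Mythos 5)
Your proof is correct and follows essentially the same route as the paper: reduce via Theorem \ref{thm:Poisson-Newton} to showing that $\cL^{-1}(f'/f)$ equals the atomic distribution $\sum_{\bk}\la\boldsymbol{\lambda},\bk\ra b_\bk\,\delta_{\la\boldsymbol{\lambda},\bk\ra}$, then verify this by computing the Laplace transform term by term from the expansion $-\log f = \sum b_\bk e^{-\la\boldsymbol{\lambda},\bk\ra s}$ and invoking uniqueness of the Laplace transform. The paper does exactly this (pairing $V$ with $e^{-ts}$ directly and recognizing $(\log f)'$), only without the explicit truncation/uniformity discussion you add, so if anything your version is slightly more careful about the convergence of the atomic series in the Laplace-transformable topology.
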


\begin{proof}
By theorem \ref{thm:Poisson-Newton}, we only have to prove that the (right) Laplace transform
of the distribution 
  $$
   V=\sum_{\bk} \langle \boldsymbol{\lambda} , \bk\rangle \, b_{\bk} \ \delta_{\langle \boldsymbol{\lambda} , \bk\rangle}
   $$
equals $f'/f$. We compute
 $$
   \la V, e^{-ts}\ra_{\RR_+}
   = \left \la \sum_{\bk} \langle \boldsymbol{\lambda} , \bk\rangle \, b_{\bk} \ \delta_{\langle \boldsymbol{\lambda} , \bk\rangle},
e^{-ts} \right \ra_{\RR_+} 
   =\sum_{\bk} \langle \boldsymbol{\lambda} , \bk\rangle \, b_{\bk} e^{-\langle \boldsymbol{\lambda} , \bk \rangle s} 
   = (\log f(s))' =\frac{f'(s)}{f(s)}\, ,
 $$
as required.
\end{proof}

 Recalling that $W(f)=W_\sigma (f)|_{\RR_+^*}$, we have
the Poisson-Newton formula on $\RR_+^*$,
 \begin{equation} \label{eqn:P-N-R+}
  W(f)= \sum_{\bk \in \Lambda} \langle \boldsymbol{\lambda} , \bk
\rangle \, b_{\bk} \ \delta_{\langle \boldsymbol\lambda ,\bk\rangle } \, ,
 \end{equation}
as distributions on $\RR^*_+$.

Consider a Dirichlet series $f(s)=1+\sum a_n e^{\lambda_n s}$ and let 
 $$
 \bar f (s)=\overline{f(\bar s)}=1+\sum \bar a_n e^{\lambda_n s}
 $$
be its conjugate. Then $\bar f$ is a Dirichlet series whose 
zeros are the $\{\bar \rho\}$ and $n_{\bar \rho} =n_\rho$. 
Also $b_\bk (\bar f)=\overline{b_\bk (f)}$.
The Poisson-Newton formula for $\bar f$ is 
$$
W_{\bar \sigma}(\bar f)(t)= \sum_{l=0}^{g-1} 
\bar c_{l} \,  \delta_0^{(l)} +
\sum_{\bk \in \Lambda} \langle \boldsymbol{\lambda} , {\bk}  \rangle 
\overline{b_\bk} \, \delta_{\langle \boldsymbol{\lambda} ,{\bk}\rangle }\, .
$$

\begin{corollary}\label{real-analytic:symmetric}
 For a real analytic Dirichlet series $f$, that is $\bar f (s)=f(s)$, we have that
 for $\sigma\in \RR$, the numbers $c_l$ and $b_\bk$ are real.
  The converse also holds.
\end{corollary}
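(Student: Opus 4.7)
The plan is to derive both directions from the explicit formula (\ref{eqn:bs}) expressing the $b_\bk$ as polynomials in the $a_n$, combined with the uniqueness of the Poisson-Newton decomposition contained in Theorem \ref{thm:main}.

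For the forward direction, assume $\bar f(s)=f(s)$, which is equivalent to $a_n\in\RR$ for every $n$ (since the exponents $\lambda_n$ are real and a Dirichlet series is determined by its coefficients). Formula (\ref{eqn:bs}) then writes each $b_\bk$ as a real rational multiple of a product of the $a_j$'s, so $b_\bk\in\RR$ is immediate. To obtain $c_l(\sigma)\in\RR$ for $\sigma\in\RR$, I would compare the two Poisson-Newton representations displayed immediately before the corollary: the one for $f$ at $\sigma$ and the one for $\bar f$ at $\bar\sigma=\sigma$. Their left-hand sides $W_\sigma(f)$ and $W_{\bar\sigma}(\bar f)$ coincide because $f=\bar f$, so subtracting the two identities produces
$$\sum_{l=0}^{g-1}\bigl(c_l(\sigma)-\overline{c_l(\sigma)}\bigr)\delta_0^{(l)}+\sum_{\bk\in\Lambda}\langle\boldsymbol\lambda,\bk\rangle\bigl(b_\bk-\overline{b_\bk}\bigr)\delta_{\langle\boldsymbol\lambda,\bk\rangle}=0.$$
The second sum already vanishes by the previous step, and the distributions $\delta_0,\delta_0',\ldots,\delta_0^{(g-1)}$ are linearly independent, forcing $c_l(\sigma)=\overline{c_l(\sigma)}$ for each $l$.

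For the converse, it suffices to use reality of the $b_\bk$ alone. Specializing (\ref{eqn:bs}) to $\bk=\mathbf e_n$, the multi-index with a single $1$ in position $n$, yields $b_{\mathbf e_n}=-a_n$, so $b_\bk\in\RR$ for every $\bk$ forces $a_n\in\RR$ for every $n$, whence $\bar f=f$. The argument is essentially bookkeeping, and I anticipate no genuine obstacle; the only point worth highlighting is the use of the uniqueness of the Poisson-Newton decomposition, itself a direct consequence of the uniqueness of the inverse Laplace transform already invoked in the proof of Theorem \ref{thm:main}.
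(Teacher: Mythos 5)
Your proof is correct. For the reality of the $b_\bk$ it matches the paper's argument exactly. For the reality of the $c_l(\sigma)$, the paper argues directly from the Hadamard factorization: when $f$ is real analytic and $\sigma\in\RR$, the discrepancy polynomial $P_{f,\sigma}$ has real coefficients because the factorization itself respects complex conjugation. You instead obtain the same conclusion from Theorem \ref{thm:main} by comparing the Poisson--Newton decomposition of $W_\sigma(f)$ with that of $W_{\bar\sigma}(\bar f)$ and invoking uniqueness of the decomposition (a support argument separates the atom at $0$ from the atoms on $\RR_+^*$, and linear independence of $\delta_0,\ldots,\delta_0^{(g-1)}$ then pins down each $c_l$); this is a clean alternative that sidesteps unpacking the Hadamard product, at the modest cost of invoking the main theorem. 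For the converse the paper appeals to the injectivity of $f\mapsto W(f)$, its inverse being the Laplace transform, whereas you simply read $a_n=-b_{\bk}$ off formula (\ref{eqn:bs}) for $\bk$ the multi-index with a single $1$ in position $n$; this is more elementary and in fact establishes the slightly stronger statement that reality of the $b_\bk$ alone forces $f$ to be real analytic, with the reality of the $c_l$ then following from the forward direction.
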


\begin{proof}
For a real analytic Dirichlet series, $a_n$ are real. Hence $b_\bk$ are real numbers.
The discrepancy polynomial has also real coefficients, so the $c_l$ are real.
The last point is due to the fact that the association $f\mapsto W(f)$ is one-to-one, 
as its inverse is the Laplace transform.
\end{proof}

We also have a symmetric Poisson-Newton formula for
$$
\widehat{W}_\sigma(f)(t)= W_\sigma(f)(t) +W_\sigma(f)(-t)=
\sum_\rho n_\rho e^{\rho |t|}  \ .
$$

\begin{theorem}\label{thm:symmetric}
 For a Dirichlet series $f$, we have as distributions on $\RR$,
$$
\widehat{W}_\sigma(f)(t)= 2 \sum_{l=0}^{\frac{g-1}{2}} c_{2l}(\sigma) \, \delta_0^{(2l)} + 
\sum_{\bk \in \Lambda \cup (-\Lambda )} \langle \boldsymbol{\lambda} , |\bk |
\rangle \, b_{|\bk |} \ \delta_{\langle \boldsymbol{\lambda} ,\bk\rangle } \, ,
 $$
where we denote $|\bk|=-\bk$, for $\bk \in -\Lambda$. \hfill $\Box$
\end{theorem}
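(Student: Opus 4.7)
The plan is to deduce Theorem \ref{thm:symmetric} directly from Theorem \ref{thm:main} by unfolding the definition $\widehat{W}_\sigma(f)(t) = W_\sigma(f)(t) + W_\sigma(f)(-t)$ and carrying out the change of variables $t \mapsto -t$ on each of the two summands produced by Theorem \ref{thm:main}. This is essentially the same scheme already used in the proof of Theorem \ref{thm:symmetric-for-general}, except that now the term $\cL^{-1}(f'/f)$ has been resolved into the explicit atomic part $\sum_{\bk\in\Lambda}\langle\boldsymbol{\lambda},\bk\rangle b_{\bk}\,\delta_{\langle\boldsymbol{\lambda},\bk\rangle}$.

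First I would record the two reflection identities for the relevant distributions. A direct test-function computation gives $\delta_0^{(k)}(-t) = (-1)^k\,\delta_0^{(k)}(t)$, so that on adding the distributional part supported at the origin one obtains $(1+(-1)^k)\,c_k(\sigma)\,\delta_0^{(k)}$, which is zero when $k$ is odd and equals $2\,c_{2l}(\sigma)\,\delta_0^{(2l)}$ when $k=2l$ is even. This accounts for the first sum in the statement. For the atomic part, the identity $\delta_\mu(-t) = \delta_{-\mu}(t)$ shows that each atom $\delta_{\langle\boldsymbol{\lambda},\bk\rangle}$ with $\bk\in\Lambda$ becomes, after reflection, the atom $\delta_{-\langle\boldsymbol{\lambda},\bk\rangle} = \delta_{\langle\boldsymbol{\lambda},\bk'\rangle}$ where $\bk' := -\bk \in -\Lambda$; the weight $\langle\boldsymbol{\lambda},\bk\rangle\,b_{\bk}$ is unchanged, and in the new index it reads $\langle\boldsymbol{\lambda},|\bk'|\rangle\,b_{|\bk'|}$.

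Adding the contributions indexed by $\Lambda$ (from $W_\sigma(f)(t)$) and those indexed by $-\Lambda$ (from $W_\sigma(f)(-t)$) yields
$$
\sum_{\bk\in\Lambda}\langle\boldsymbol{\lambda},\bk\rangle\,b_{\bk}\,\delta_{\langle\boldsymbol{\lambda},\bk\rangle} \;+\; \sum_{\bk\in -\Lambda}\langle\boldsymbol{\lambda},|\bk|\rangle\,b_{|\bk|}\,\delta_{\langle\boldsymbol{\lambda},\bk\rangle}\;=\;\sum_{\bk\in\Lambda\cup(-\Lambda)}\langle\boldsymbol{\lambda},|\bk|\rangle\,b_{|\bk|}\,\delta_{\langle\boldsymbol{\lambda},\bk\rangle},
$$
which is precisely the second term in the statement. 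I do not foresee a genuine obstacle: everything reduces to the parity analysis of $\delta_0^{(k)}$ under reflection and to a re-indexing of the atomic sum. The only mild care needed is at $t=0$, where the two pieces $W_\sigma(f)(t)$ and $W_\sigma(f)(-t)$ overlap; but the definition of $\widehat W_\sigma(f)$ given in Section~\ref{sec:newton-cramer} already packages this contribution into the coefficients $c_k(\sigma)$, so the addition at the origin is unambiguous.
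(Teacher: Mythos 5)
Your proof is correct and matches the paper's intended argument: the paper leaves Theorem \ref{thm:symmetric} unproved (marked $\Box$) as an immediate consequence of Theorem \ref{thm:main} and the reflection $t\mapsto -t$, exactly as you carry out, and your parity analysis of $\delta_0^{(k)}$ under reflection is the same as in the proof of Theorem \ref{thm:symmetric-for-general}. The re-indexing of the atomic sum via $\bk\mapsto -\bk$ with the conventions $|\bk|=-\bk$ on $-\Lambda$ is handled correctly and completes the argument.
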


\medskip

For completeness, we also give parameter versions of the Poisson-Newton
formulas for Dirichlet series. Observe that the space of Dirichlet series is invariant by
the change of variables
$s\mapsto \alpha s +\beta$, with $\alpha >0$ and $\beta \in \CC$.
We have the following Poisson-Newton formula for 
$$
e^{-\frac{\beta}{\alpha} |t|} \widehat{W}_\sigma(f)(t/\alpha)
=\sum_\rho n_\rho e^{(\rho-\beta) |t|/\alpha}\, .
$$

\begin{corollary}\label{cor:thm:symmetric_parameters} 
Let $\alpha >0$ and $\beta \in \RR$.
We have as distributions on $\RR$,
$$
e^{-\frac{\beta}{\alpha} |t|} \widehat{W}_{\sigma}(f)(t/\alpha)= 
 2 \sum_{l=0}^{\frac{g-1}{2}}  c_{2l}'
 \, \delta_0^{(2l)} + \sum_{\bk \in \Lambda \cup (-\Lambda )} \alpha \, \langle \boldsymbol{\lambda} , |\bk |
\rangle e^{-\langle \boldsymbol{\lambda} , |\bk |\rangle \beta}   \, b_{|\bk |} \ \delta_{\alpha \langle \boldsymbol{\lambda} ,\bk\rangle } \, ,
$$
where $c_j'$ are given by $c_0'+c_1's+\ldots +c_{g-1}' s^{g-1}= 
\alpha(c_0+c_1 (\alpha s+\beta) +\ldots +c_{g-1} (\alpha s+\beta) ^{g-1})$.
\end{corollary}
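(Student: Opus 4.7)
The plan is to combine the general-function parameter version, Corollary \ref{cor:thm:symmetric-parameters-general}, with the explicit evaluation of $\cL^{-1}(f'/f)$ on $\RR_+^*$ coming from Theorem \ref{thm:main}. By Theorem \ref{thm:main} we have
\begin{equation*}
\cL^{-1}(f'/f) = \sum_{k=0}^{g-1} c_k(\sigma)\,\delta_0^{(k)} + \sum_{\bk \in \Lambda} \langle \boldsymbol{\lambda}, \bk\rangle\, b_\bk\,\delta_{\langle \boldsymbol{\lambda}, \bk\rangle},
\end{equation*}
so the nontrivial content of $\cL^{-1}(f'/f)$ away from the origin is a Dirac comb supported on the frequencies $\langle \boldsymbol{\lambda}, \bk\rangle$. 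Substituting this expression into Corollary \ref{cor:thm:symmetric-parameters-general} reduces everything to tracking how a single Dirac atom behaves under the rescaling $t \mapsto t/\alpha$ and multiplication by $e^{-(\beta/\alpha) t}$.

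First I would record the elementary identity $\delta_a(t/\alpha) = \alpha\,\delta_{\alpha a}(t)$ as distributions in $t$ (change of variables in the defining pairing) and the obvious $\psi(t)\,\delta_b(t) = \psi(b)\,\delta_b$ for smooth $\psi$. Applied to the summand $\langle \boldsymbol{\lambda}, \bk\rangle\, b_\bk\,\delta_{\langle \boldsymbol{\lambda}, \bk\rangle}$ for $\bk \in \Lambda$, these combine to give
\begin{equation*}
e^{-\frac{\beta}{\alpha}t}\,\langle \boldsymbol{\lambda}, \bk\rangle\, b_\bk\,\delta_{\langle \boldsymbol{\lambda}, \bk\rangle}(t/\alpha) = \alpha\,\langle \boldsymbol{\lambda}, \bk\rangle\, b_\bk\, e^{-\beta\langle \boldsymbol{\lambda}, \bk\rangle}\,\delta_{\alpha\langle \boldsymbol{\lambda}, \bk\rangle}(t),
\end{equation*}
which is exactly the $\bk \in \Lambda$ part of the stated formula.

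Then I would handle the reflected piece $e^{(\beta/\alpha)t}\cL^{-1}(f'/f)(-t/\alpha)$ by the same recipe, producing atoms at $-\alpha\langle \boldsymbol{\lambda}, \bk\rangle$; indexing these by $\bk \in -\Lambda$ with the convention $|\bk| = -\bk$ recovers the remaining summands of $\bk \in \Lambda \cup (-\Lambda)$. The discrepancy polynomial contribution at the origin requires no further work: the identification of the $c_j'$ via $c_0'+c_1's+\ldots+c_{g-1}'s^{g-1} = \alpha\bigl(c_0+c_1(\alpha s+\beta)+\ldots+c_{g-1}(\alpha s+\beta)^{g-1}\bigr)$, together with the cancellation of odd-order Diracs under symmetrization, has already been carried out in the proof of Corollary \ref{cor:thm:symmetric-parameters-general}.

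There is no real analytic obstacle beyond careful bookkeeping of scaling, translation and exponential factors on Dirac masses. The only mildly subtle point is to check that the left-hand side $e^{-(\beta/\alpha)|t|}\,\widehat{W}_\sigma(f)(t/\alpha)$ is well-defined as a distribution despite the non-smoothness of $|t|$ at the origin; this is legitimate because the multiplier $e^{-(\beta/\alpha)|t|}$ is continuous and equals $1$ at $t=0$, so it acts on the $\{0\}$-supported part by preserving all jets, while away from the origin $\widehat{W}_\sigma(f)(t/\alpha)$ is a discrete sum of point masses on which the product with a continuous function is unambiguous.
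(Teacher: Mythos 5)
Your overall strategy is the same as the paper's: specialize Corollary \ref{cor:thm:symmetric-parameters-general} to the Dirichlet series case by identifying $\cL^{-1}(f'/f)$ explicitly and then carrying out the bookkeeping of scaling, translation, and exponential weights on Dirac masses. The scaling identities $\delta_a(t/\alpha) = \alpha\,\delta_{\alpha a}$ and $e^{-(\beta/\alpha)t}\delta_{\alpha a} = e^{-\beta a}\delta_{\alpha a}$ are correct, and your treatment of the reflected piece via the $\bk \in -\Lambda$ indexing is right. The remark on why the product with $e^{-(\beta/\alpha)|t|}$ makes sense is also fine.

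However, your opening identification of $\cL^{-1}(f'/f)$ is wrong, and this is not a cosmetic slip. Theorem \ref{thm:main} gives a formula for $W_\sigma(f)$, not for $\cL^{-1}(f'/f)$. Combining it with Theorem \ref{thm:Poisson-Newton}, which says $W_\sigma(f) = \sum_{j=0}^{g-1} c_j(\sigma)\,\delta_0^{(j)} + \cL^{-1}(f'/f)$, yields
\begin{equation*}
\cL^{-1}(f'/f) = \sum_{\bk \in \Lambda} \langle \boldsymbol{\lambda}, \bk\rangle\, b_\bk\,\delta_{\langle \boldsymbol{\lambda}, \bk\rangle},
\end{equation*}
with \emph{no} $c_k(\sigma)\,\delta_0^{(k)}$ terms (this is also exactly what is established in the proof of Theorem \ref{thm:main}, where the Laplace transform of $V=\sum_\bk \langle\boldsymbol{\lambda},\bk\rangle b_\bk\,\delta_{\langle\boldsymbol{\lambda},\bk\rangle}$ is shown to be $f'/f$). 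If one actually substitutes your stated formula into Corollary \ref{cor:thm:symmetric-parameters-general}, the $c_k(\sigma)\,\delta_0^{(k)}$ pieces you erroneously attached to $\cL^{-1}(f'/f)$ would produce additional rescaled derivatives of $\delta_0$ on the right-hand side, over and above the $2\sum_l c_{2l}'\,\delta_0^{(2l)}$ that Corollary \ref{cor:thm:symmetric-parameters-general} already contributes, and the resulting formula would not match the statement. Your later assertion that the origin contribution ``requires no further work'' is only true once the spurious $\delta_0^{(k)}$ terms are deleted from your starting formula; as written, the proof silently drops terms that your own first equation says should be there. Correcting the opening equation makes the rest of your argument sound and brings it in line with the paper's (very terse) proof.
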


\begin{proof}
This results by applying corollary \ref{cor:thm:symmetric-parameters-general} to the 
Dirichlet series $f(s)$.
\end{proof}

In particular, for $\alpha =1$ and $g=1$ that we use in the applications, we get
$$
\sum_\rho n_\rho e^{(\rho-\beta) |t|}= 2  c_0(\sigma) \, 
\delta_0 + \sum_{\bk \in \Lambda \cup (-\Lambda )}  \, \langle \boldsymbol{\lambda} , |\bk |
\rangle e^{-\langle \boldsymbol{\lambda} , |\bk |\rangle \beta}   
\, b_{|\bk |} \ \delta_{ \langle \boldsymbol{\lambda} ,\bk\rangle } \, .
$$
Here we determine $c_0'$ explicitly: we have $g(s)=f(s+\beta)$ and
$\sigma'= \sigma-\beta$. Then $c_0'=c_0'(g,\sigma')=c_0(f,\sigma)$.

\section{Basic applications}

\subsection{Classical Poisson formula} \label{subsec:classical}

The Poisson-Newton formula that we have proved in section \ref{sec:dirichlet} 
is a generalization of the 
well-known classical Poisson formula
\begin{equation}\label{eqn:classical}
\sum_{k\in \ZZ} e^{i\frac{2\pi}{\lambda} k t}  =\lambda \sum_{k\in \ZZ}  \delta_{\lambda k} \, .
\end{equation}
where $\lambda>0$. Actually this Poisson formula is associated to a Dirichlet series
of only one frequency.

We derive the classical Poisson formula from the symmetric Poisson-Newton formula.
It is also interesting to clarify the structure of the
Newton-Cramer distribution at $0$. It helps to understand why the 
Dirac $\delta_0$ appearing in the right side of the
classical Poisson formula is of a different nature than the other $\delta_{\lambda k}$ for $k\not= 0$, 
something that was intuitively suspected from the analogy with trace formulas
(see a comment on this in \cite[p.\ 2]{CV}).

In order to use the symmetric Poisson-Newton formula we compute the discrepancy polynomial $P_f$ for
$f(s)=1-e^{-\lambda s}$. We have that $\sigma=0$ is a zero. From the classical Hadamard factorization
$$
\sinh (\pi s)=\pi s \prod_{k\in \ZZ^*} \left (1-\frac{s}{ik}\right ) e^{\frac{s}{ik}} \ ,
$$
we get the Hadamard factorization for $f$,
$$
f(s)=2 e^{-\lambda s /2} \sinh(\lambda s/2)=s \lambda e^{-\lambda s/2} \prod_{k\in \ZZ^*} \left (1-\frac{s}{\rho_k}\right ) e^{\frac{s}{\rho_k}} \ .
$$
Note that this is equivalent to
 $$
G(s) =\frac{1}{s} - \sum_{k\in \ZZ^*} \left (\frac{1}{\rho_k-s}-\frac{1}{\rho_k} \right )  
=\frac{\lambda/2}{\tanh\left (\lambda s/2\right )} \ .
 $$

Thus $Q_f(s)=(\log \lambda +2\pi i n)-\frac{\lambda}{2} s$, with $n\in \ZZ$, and
$$
P_f(s)=-Q_f'(s)=c_0=\frac{\lambda}{2} \ .
$$

Therefore we can apply the symmetric Poisson-Newton formula (theorem \ref{thm:symmetric}) and we get
 $$
\sum_{k\in \ZZ} e^{i\frac{2\pi}{\lambda} k|t|} = 2 c_0 \delta_0 +
\lambda \sum_{k\in \ZZ^*} \delta_{\lambda k} 
= \lambda \delta_0 + \lambda \sum_{k\in \ZZ^*} \delta_{\lambda k} 
=\lambda \sum_{k\in \ZZ} \delta_{\lambda k} \ .
 $$
We finally observe that
$$
\sum_{k\in \ZZ} e^{i\frac{2\pi}{\lambda} k|t|} = 1+2\sum_{k=1}^{+\infty} \cos \left (\frac{2\pi}{\lambda} k |t|
\right ) = 1+2\sum_{k=1}^{+\infty} \cos \left (\frac{2\pi}{\lambda} k t
\right ) =\sum_{k\in \ZZ} e^{i\frac{2\pi}{\lambda}k t }\ ,
$$
because we can reorder freely a converging (in the distribution sense) infinite series of distributions.

\subsection{Newton formulas}

We show now how the Poisson-Newton formula is a generalization to Dirichlet series of Newton
formulas which express Newton sums of roots of a polynomial equation in terms of its coefficients
(or  elementary symmetric functions).

Let $P(z)=z^n+a_1 z^{n-1}+\ldots +a_n $ be a polynomial of degree $n\geq 1$ with zeros $\alpha_1, \ldots, \alpha_n$
repeated according to their multiplicity. 
For each integer $m\geq 1$, the Newton sums of the roots are the symmetric functions
$$
S_m=\sum_{j=1}^n \alpha_j^m \ .
$$
From the fundamental theorem on symmetric functions, these Newton sums can be expressed polynomially with
integer coefficients in terms of
elementary symmetric functions, i.e., in terms of the coefficients of $P$. These are the Newton formulas. For instance, if for $k\geq 1$
$$
\Sigma_k =\sum_{1\leq i_1<\ldots < i_k \leq n} \alpha_{i_1}\ldots \alpha_{i_k} = (-1)^k a_k \ ,
$$
then we have
\begin{align*}
S_1&= \Sigma_1 \\
S_2&= \Sigma_1^2 - 2 \Sigma_2 \\
S_3&= \Sigma_1^3 -3\Sigma_2 \Sigma_1 +3\Sigma_3 \\
S_4&= \Sigma_1^4 -4\Sigma_2\Sigma_1^2 +4 \Sigma_3 \Sigma_1 +2\Sigma_2^2 -4\Sigma_4 \\
&\vdots
\end{align*}

We recover
them applying the Poisson-Newton formula to the finite Dirichlet series
$$
f(s)=e^{-\lambda ns }P(e^{\lambda s}) =1+a_1 e^{-\lambda s} +\ldots + a_n e^{-\lambda n s} \ .
$$
The zeros of $f$ are the $(\rho_{j,k})$ with $j=1,\ldots , n$, $k\in \ZZ$, and
$$
e^{\rho_{j, k}}= \alpha_j^{1/\lambda} e^{\frac{2\pi i}{\lambda}k} \ .
$$
Thus, using the classical Poisson formula (with $\sigma=0$), its Newton-Cramer distribution can be computed in $\RR$ as
\begin{align*}
 \sum_\rho e^{\rho |t|} &= \sum_{j=1}^n \alpha_j^{(1/\lambda) t} \sum_{k\in \ZZ} e^{\frac{2\pi i}{\lambda}kt} 
=\sum_{j=1}^n \alpha_j^{(1/\lambda) t} \lambda \sum_{m\in \ZZ} \delta_{m\lambda}\\
&=\lambda \sum_{m\in \ZZ} \left ( \sum_{j=1}^n \alpha_j^{m} \right )\, \delta_{m\lambda} 
= \lambda \sum_{m\in \ZZ} S_m \, \delta_{m\lambda}\ .
\end{align*}

Now, using the Poisson-Newton formula in $\RR_+^*$
$$
 \sum_\rho e^{\rho t}= \sum_{\bk \in \Lambda} \langle \boldsymbol{\lambda} , \bk
\rangle \, b_{\bk} \ \delta_{\langle \boldsymbol{\lambda} ,\bk\rangle } \, ,
 $$
taking into account the repetitions in the right side, and that
$\boldsymbol{\lambda}=(\lambda_1,\ldots, \lambda_n)= (\lambda ,2\lambda ,\ldots, n\lambda )$,
we have, using the formula (\ref{eqn:bs}) for the $b_\bk$,
$$
S_m=m \sum_{ k_1+2k_2+\ldots +nk_n=m} b_\bk = m \sum_{ k_1+2k_2+\ldots +nk_n=m} \frac{(||k||-1)!}{\prod_j k_j} \prod_j \Sigma_j^{k_j} \, ,
$$
which gives the explicit Newton relations. Moreover, Newton relations are equivalent to the Poisson-Newton formula in $\RR_+^*$ in this case. 

For example, for $m=4$,
$$
S_4=4\, (b_{(4,0,0,0)} +b_{(2,1,0,0)} + b_{(1,0,1,0)} + b_{(0,2,0,0)} + b_{(0,0,0,1)} ) \ ,
$$
and from $b_{(4,0,0,0)} = \frac14 \Sigma_1^4$, $b_{(2,1,0,0)} =  - \Sigma_1^2 \Sigma_2$,
 $b_{(1,0,1,0)} = \Sigma_1 \Sigma_3$,
 $b_{(0,2,0,0)}= \frac12 \Sigma_2^2$ and $b_{(0,0,0,1)} = - \Sigma_4$,
we get 
$$
S_4= \Sigma_1^4 -4\Sigma_2\Sigma_1^2 +4 \Sigma_3 \Sigma_1 +2\Sigma_2^2 -4\Sigma_4 \ .
$$

\section{Functional equations} \label{sec:functional}

When a Dirichlet series satisfies a functional equation, we can deduce a constraint on the structure
at zero of $W_\sigma(f)$ for $\sigma$ the center of symmetry (theorem \ref{thm:functional_eq_0structure}). 
We start by giving a precise definition of the property of ``having a functional equation'', as we know of no reference
in the classical literature. We start with a simple remark.

\begin{lemma} \label{lemma:cone}
For $\theta_1 < \theta_2 $, $\theta_2-\theta_1 < \pi$, denote by $C(\theta_1, \theta_2)$ the cone
of values of $s\in \CC$ with
$\theta_1 < \Arg \ s < \theta_2$. If $\{\rho\}$ is contained in a cone $\alpha+ C(\theta_1, \theta_2)$, 
$\alpha \in \RR$, then
$$
W(f)(t)=\sum_\rho n_\rho e^{\rho t} \ ,
$$
is a holomorphic function on the variable $t$ in $C(\pi/2-\theta_1 , 3\pi/2 -\theta_2)$.
\end{lemma}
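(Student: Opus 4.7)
The plan is to show that the series $\sum_\rho n_\rho e^{\rho t}$ converges absolutely and uniformly on every compact subset of the $t$-cone, and then invoke Weierstrass's theorem, since each summand $n_\rho e^{\rho t}$ is entire in $t$.

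First I would parameterize the problem. For a zero/pole $\rho$ write $\rho-\alpha=re^{i\phi}$ with $r\geq 0$ and $\phi\in(\theta_1,\theta_2)$; for a point $t$ in the target cone write $t=\tau e^{i\psi}$ with $\tau>0$ and $\psi\in(\pi/2-\theta_1,\,3\pi/2-\theta_2)$. A direct calculation gives
$$
\Re(\rho t)=\alpha\tau\cos\psi+r\tau\cos(\phi+\psi).
$$
The decisive geometric observation is that the $t$-cone is defined precisely so that the interval $(\theta_1+\psi,\theta_2+\psi)$ is contained in $(\pi/2,3\pi/2)$, which is exactly the range on which $\cos$ is strictly negative. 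On any compact $K\subset C(\pi/2-\theta_1,3\pi/2-\theta_2)$, the argument $\psi$ ranges over a closed subinterval of $(\pi/2-\theta_1,3\pi/2-\theta_2)$ and $\tau$ is bounded below by some $\tau_{\min}>0$, so there is a uniform $\delta>0$ with $\cos(\phi+\psi)\leq -\delta$ for every admissible $\phi$ and every $t\in K$. The bounded term $\alpha\tau\cos\psi$ is absorbed into a constant $A_K<\infty$, giving the master estimate
$$
|e^{\rho t}|\leq e^{A_K}\,e^{-\delta\tau_{\min} r}\qquad\text{uniformly for }t\in K.
$$

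Next I would use the convergence exponent $d$ of $f$: by hypothesis $\sum_{\rho\neq 0}|n_\rho|\,|\rho|^{-d}<\infty$. Because $|\rho-\alpha|\geq |\rho|/2$ once $|\rho|\geq 2|\alpha|$, and exponential decay dominates any polynomial, there is a constant $C_K$ such that $e^{-\delta\tau_{\min} r}\leq C_K\,|\rho|^{-d}$ for all but finitely many $\rho$. Hence $\sum_\rho |n_\rho|\,|e^{\rho t}|$ is dominated uniformly on $K$ by a convergent tail of $\sum_\rho |n_\rho|\,|\rho|^{-d}$, producing absolute and uniform convergence. Weierstrass then yields holomorphy of the limit on the cone, and (since the uniform limit coincides with the distributional limit $W(f)$ constructed in Lemma~\ref{lem:1} on $\RR_+^*$) the distribution $W(f)$ is represented by this holomorphic function.

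The only subtle step I foresee is the geometric bookkeeping: checking that the forbidden directions $\psi=\pi/2-\theta_1$ and $\psi=3\pi/2-\theta_2$ are precisely those at which $\cos(\phi+\psi)$ may vanish for some admissible $\phi$, and therefore that strict interiority of $\psi$ in the $t$-cone is what delivers the positive gap $\delta$ uniformly on compacta. Once that is in hand, the finite-order hypothesis through the convergence exponent makes the analytic estimate routine.
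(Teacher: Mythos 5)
Your proof is correct. The paper's proof rests on the same geometric fact that you isolate, namely that for $t$ in the cone $C(\pi/2-\theta_1,3\pi/2-\theta_2)$ one has $\Re\big((\rho-\alpha)t\big)<0$; but the paper only records the resulting uniform bound $|e^{\rho t}|<e^{\alpha\Re t}$, which is bounded in $\rho$ rather than decaying, and secures convergence by routing through the $d$-th primitive series $K_d$ from the previous section, whose summands carry the extra factor $(\rho-\sigma)^{-d}$, after which the $d$-th derivative of the holomorphic $K_d$ gives $W(f)$. You instead extract the full exponential decay $|e^{\rho t}|\leq e^{A_K}e^{-\delta\tau_{\min}|\rho-\alpha|}$ uniformly on compacta (the compactness argument delivering the uniform gap $\delta>0$ is exactly the extra bookkeeping this sharper estimate requires), dominate it by $C_K|\rho|^{-d}$, and obtain absolute locally uniform convergence of the original series $\sum_\rho n_\rho e^{\rho t}$ directly. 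Both arguments rely on the convergence exponent $d$ and on Weierstrass's theorem; yours is a bit more self-contained, since it shows the exponential sum itself converges absolutely and avoids passing through the antiderivative, while the paper's fits the $K_\ell$ machinery already set up in Section~2.
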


\begin{proof}
For $t\in C(\pi/2-\theta_1 , 3\pi/2 -\theta_2)$ we have $\Re ( (\rho -\alpha)\, t) <0$, whence
$$
\left |e^{\rho t} \right | <e^{\alpha t} \ ,
$$
and the series $K_\ell(t)$ defined in (\ref{eqn:Kl}) is holomorphic in that region, so the result follows.
\end{proof}

From this we obtain the following straightforward corollary:

\begin{corollary} \label{cor:divisor}
 The divisor of any Dirichlet series cannot be contained in a cone $\alpha+ C(\theta_1, \theta_2)$ for
$\pi/2 <\theta_1 < \theta_2 <3\pi/2 $.
\end{corollary}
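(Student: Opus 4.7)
The plan is to argue by contradiction by playing Lemma~\ref{lemma:cone} against the Poisson-Newton formula on $\RR_+^*$. Suppose the divisor of a non-constant Dirichlet series $f$ were contained in a cone $\alpha + C(\theta_1,\theta_2)$ with $\pi/2 < \theta_1 < \theta_2 < 3\pi/2$. Then by Lemma~\ref{lemma:cone}, $W(f)$ is the restriction to $\RR$ of a function holomorphic on the dual sector $C(\pi/2-\theta_1,\,3\pi/2-\theta_2)$. The hypothesis on $\theta_1,\theta_2$ forces $\pi/2-\theta_1<0<3\pi/2-\theta_2$, so this dual sector is a non-empty open cone whose interior strictly contains the positive real axis. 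Consequently $W(f)|_{\RR_+^*}$ would be given by a real-analytic function.

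I would then confront this with the Poisson-Newton formula (\ref{eqn:P-N-R+}): on $\RR_+^*$,
$$
W(f) = \sum_{\bk\in\Lambda} \langle\boldsymbol{\lambda},\bk\rangle\,b_{\bk}\,\delta_{\langle\boldsymbol{\lambda},\bk\rangle},
$$
a purely atomic distribution supported on the discrete set $\{\langle\boldsymbol{\lambda},\bk\rangle:\bk\in\Lambda\}\subset\RR_+^*$. A real-analytic function on an open interval cannot coincide with such a distribution unless every Dirac coefficient (after collecting contributions with equal frequency) vanishes. The proof therefore reduces to showing that this atomic sum is not identically zero.

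For the non-vanishing I would use the non-constancy of $f$ together with formula (\ref{eqn:bs}). Let $n_0$ be the smallest index with $a_{n_0}\neq 0$, and let $\bk_0$ be the multi-index with a single $1$ in position $n_0$; then $b_{\bk_0}=-a_{n_0}\neq 0$, contributing a non-trivial Dirac at $\lambda_{n_0}>0$. Any other $\bk'\in\Lambda$ with $\langle\boldsymbol{\lambda},\bk'\rangle=\lambda_{n_0}$ must satisfy $\|\bk'\|\geq 2$; moreover, if $k'_j=0$ for every $j<n_0$ then $\langle\boldsymbol{\lambda},\bk'\rangle\geq 2\lambda_{n_0}$, a contradiction, so some $j<n_0$ has $k'_j\geq 1$, and then (\ref{eqn:bs}) gives $b_{\bk'}\propto\prod_j a_j^{k'_j}=0$ by the minimality of $n_0$. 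Hence the Dirac at $\lambda_{n_0}$ is not cancelled, producing the desired contradiction with real-analyticity.

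The main delicate point is precisely this last step: when the $\lambda_n$ are $\QQ$-linearly dependent, collisions in the frequency lattice $\{\langle\boldsymbol{\lambda},\bk\rangle\}$ can occur, and one has to make sure such collisions do not accidentally annihilate the leading Dirac. The minimality argument above handles this, after which the contradiction with holomorphy of $W(f)$ on a sector containing $\RR_+^*$ closes the proof.
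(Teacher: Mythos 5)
Your argument is correct and is essentially the paper's own proof: contrast the analyticity of $W(f)$ on a sector containing $\RR_+^*$ furnished by Lemma~\ref{lemma:cone} with the purely atomic Poisson-Newton formula (\ref{eqn:P-N-R+}) on $\RR_+^*$. The one thing you add is a careful verification, via the minimal index $n_0$ and formula (\ref{eqn:bs}), that collisions among the frequencies $\langle\boldsymbol{\lambda},\bk\rangle$ cannot cancel the leading Dirac mass; this non-triviality point is left implicit in the paper's very terse proof, so your version tightens it without changing the route.
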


\begin{proof}
From the Poisson formula (\ref{eqn:P-N-R+}), 
we get that the distribution $W(f)$ is an atomic distribution on $\RR_+^*$.
Thus the sum of exponentials associated to the zeros cannot be a convergent series for $t\in \RR_+^*$. 
But lemma \ref{lemma:cone} gives the analytic convergence of the sum if
the divisor is contained in the cone $\alpha+ C(\theta_1, \theta_2)$ .
\end{proof}

We say that the divisor $D_1$ is contained in the divisor $D_2$, and denote this by
$D_1 \subset D_2$ if any zero, resp.\ pole,  of $D_1$ is a zero,
resp.\ pole, of $D_2$, and $|n_\rho (D_1)| \leq |n_\rho (D_2)|$ for all $\rho \in \CC$.
Also if $D_1=\sum n_\rho \,\rho$ and $D_2=\sum m_\tau \, \tau$ are two divisors, then
the sum and difference are defined by 
$D_1+D_2=\sum n_\rho \,\rho + \sum m_\tau \, \tau$
and $D_1-D_2=\sum n_\rho \,\rho + \sum (-m_\tau) \, \tau$.

\begin{definition} \label{def:functional}
 The meromorphic function $f$ has a functional equation if there exists $\sigma^*\in \RR$ and a
divisor $D\subset \Div (f)$ contained in a left cone
$\alpha+ C(\theta_1, \theta_2)$, with $\pi/2 <\theta_1 < \theta_2 <3\pi/2 $, 
such that $\Div (f) -D$ is infinite and symmetric with respect to the vertical line $\Re s =\sigma^*$.
\end{definition}

\begin{proposition}
 If $f$ has a functional equation and 
the divisor of $f$ is contained in a left half plane then $\sigma^* \in \RR$ is unique.
\end{proposition}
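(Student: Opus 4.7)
The plan is to argue by contradiction. Suppose that two distinct values $\sigma_1^*, \sigma_2^* \in \RR$ both satisfy Definition~\ref{def:functional}, with associated exceptional divisors $D_1, D_2 \subset \Div(f)$, and set $c = \sigma_2^* - \sigma_1^* \neq 0$. Let $D_i' = \Div(f) - D_i$; the symmetry of $D_i'$ about the line $\Re s = \sigma_i^*$, combined with the global bound $\Div(f) \subset \{\Re \rho \leq \sigma_1\}$, confines $D_i'$ to the vertical strip $\{2\sigma_i^* - \sigma_1 \leq \Re \rho \leq \sigma_1\}$. Moreover, since $D_i$ lies in a left cone $\alpha_i + C(\theta_1^i, \theta_2^i)$ with opening angles strictly between $\pi/2$ and $3\pi/2$, an elementary estimate gives a constant $K_i > 0$ such that every $\rho \in \supp(D_i)$ satisfies $\Re \rho \leq \alpha_i - K_i |\Im \rho|$.

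The first step is to show that $D_1'$ and $D_2'$ coincide (as divisors with multiplicities) on the region $\{|\Im \rho| > T\}$ for $T$ sufficiently large. Indeed, the cone estimate forces any $\rho \in \supp(D_i)$ with $|\Im \rho| > T$ to satisfy $\Re \rho < \alpha_i - K_i T$, which for $T$ large is strictly less than both lower strip bounds $2\sigma_j^* - \sigma_1$; thus within $\{|\Im \rho| > T\}$ all cross-intersections $\supp(D_i) \cap \supp(D_j')$ are empty. Consequently any $\rho \in \Div(f)$ with $|\Im \rho| > T$ either lies in $\supp(D_1) \cap \supp(D_2)$ (both very negative) or in $\supp(D_1') \cap \supp(D_2')$ with the full $\Div(f)$-multiplicity attributed to $D_i'$, so $D_1'$ and $D_2'$ agree there. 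Since $D_1'$ is infinite and contained in a bounded-width strip, its imaginary parts are unbounded, and after possibly replacing $\Im > T$ by $\Im < -T$ we may assume that the set $E$ of points of $D_1'$ (equivalently of $D_2'$) in $\{\Im \rho > T\}$ is infinite.

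The final step is to derive the contradiction from the double symmetry of $E$. Each reflection $R_i : \rho \mapsto 2\sigma_i^* - \bar \rho$ preserves $D_i'$ and also preserves the half-plane $\{\Im \rho > T\}$ (it fixes imaginary parts), so $E$ is invariant under both $R_1$ and $R_2$. Hence $E$ is invariant under the composition $R_2 \circ R_1$, which acts as the horizontal translation $\rho \mapsto \rho + 2c$. Iterating, any fixed $\rho_0 \in E$ generates an arithmetic progression $\{\rho_0 + 2kc : k \in \ZZ\} \subset E \subset \Div(f)$; choosing $k$ with the appropriate sign and $|2kc|$ exceeding $\sigma_1 - \Re \rho_0$ produces a point of $\Div(f)$ with $\Re \rho > \sigma_1$, contradicting $\Div(f) \subset \{\Re \leq \sigma_1\}$. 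Therefore $\sigma_1^* = \sigma_2^*$.

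The main obstacle is the first step, which is precisely where the cone hypothesis (rather than a mere ``left half-plane'' hypothesis) in Definition~\ref{def:functional} is essential: it guarantees that the exceptional divisors are pushed arbitrarily far to the left as $|\Im \rho| \to \infty$, so that $D_1'$ and $D_2'$ agree outside a horizontal strip of bounded height. The remainder is then the elementary geometric observation that a non-empty set contained in a vertical strip of bounded width cannot be symmetric about two distinct vertical lines, since this would force invariance under a non-trivial horizontal translation.
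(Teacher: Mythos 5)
Your proof is correct and follows exactly the route the paper takes: two distinct axes of symmetry for the symmetric part of the divisor compose to a nontrivial horizontal translation, and an infinite subdivisor invariant under such a translation is incompatible with the left half-plane constraint. The paper states this in one sentence without justifying that $D_1'$ and $D_2'$ have a common infinite piece; your careful argument (using the cone hypothesis to show $D_1'$ and $D_2'$ agree for $|\Im\rho|$ large, then applying the two reflections) is precisely the elaboration needed to make that sentence rigorous.
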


\begin{proof}
Otherwise, if they were two distinct values $\sigma^*$, then $\Div (f)$ would have an infinite 
subdivisor invariant by a real 
translation and this contradicts the hypothesis that the divisor of $f$ is contained in a left half plane.
\end{proof}

\begin{proposition} \label{prop:6.4}
 If $f$ has a functional equation and the divisor of $f$ is contained in a 
left half plane then $\Div (f) -D$ is contained 
in a vertical strip.  The minimal strip $\{ \sigma_- <\Re s < \sigma_+ \}$,
$\sigma_+ \leq \sigma_1$, with this property is the critical strip and
$\sigma^*=\frac{\sigma_-+\sigma_+}{2}$ is its center.
\end{proposition}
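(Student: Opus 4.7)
My plan is to combine the one-sided bound coming from $\Div(f)$ being in a left half-plane with the two-sided bound induced by the reflection symmetry about $\Re s = \sigma^*$. The first step is to note that since $D \subset \Div(f)$ in the sense defined just before the statement, the support of $\Div(f) - D$ is contained in that of $\Div(f)$, hence in $\{\Re s \leq \sigma_1\}$. So every $\rho \in \Div(f) - D$ already satisfies $\Re \rho \leq \sigma_1$.

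Next I will use the hypothesis that $\Div(f) - D$ is symmetric with respect to the vertical line $\Re s = \sigma^*$: for every $\rho \in \Div(f) - D$ the reflected point $\rho^*$ with $\Re \rho^* = 2\sigma^* - \Re \rho$ also belongs to $\Div(f)-D$, and therefore must likewise satisfy $\Re \rho^* \leq \sigma_1$. Rearranging gives the matching lower bound $\Re \rho \geq 2\sigma^* - \sigma_1$, so
$$
\Div(f) - D \subset \{\, 2\sigma^* - \sigma_1 \leq \Re s \leq \sigma_1 \,\},
$$
which proves the first claim that $\Div(f) - D$ lies in a vertical strip.

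To identify the minimal strip, I will define $\sigma_+ = \sup\{\Re \rho : \rho \in \Div(f) - D\}$ and $\sigma_- = \inf\{\Re \rho : \rho \in \Div(f) - D\}$; these are finite by the previous step, and the infinitude of $\Div(f) - D$ guarantees that at least one nontrivial extremum exists. The reflection symmetry exchanges supremum and infimum, forcing $\sigma_- = 2\sigma^* - \sigma_+$, equivalently $\sigma^* = (\sigma_- + \sigma_+)/2$ as required. The bound $\sigma_+ \leq \sigma_1$ is immediate, and any vertical strip that contains $\Div(f) - D$ must contain all real parts of its elements, hence must contain the interval $[\sigma_-, \sigma_+]$; this is the sense in which the strip is minimal.

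There is essentially no substantive obstacle here: the argument is a direct sandwich estimate. The only small point to address when writing it up is the mild discrepancy between the open notation $\{\sigma_- < \Re s < \sigma_+\}$ used in the statement of the critical strip and the closed minimal enclosing strip produced by the argument; this open/closed convention is standard in the classical literature on zeta functions (where the extremal real parts $\sigma_\pm$ serve as the boundary lines of the critical strip) and does not affect the substance of the proposition.
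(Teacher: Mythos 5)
Your proof is correct and follows essentially the same route as the paper's: both use that $\Div(f)-D$ inherits the one-sided bound $\Re\rho\leq\sigma_1$ from $D\subset\Div(f)$, combine it with the reflection across $\Re s=\sigma^*$ to produce the matching lower bound, and then read off $\sigma^*=\tfrac{\sigma_-+\sigma_+}{2}$ from the symmetry of the resulting minimal strip. You merely spell out the sandwich estimate and the open/closed boundary convention more explicitly than the paper's terse proof does.
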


\begin{proof}
Since $\Div (f)$ has no zeros nor poles for $\Re s >\sigma_1$, the divisor of $\Div (f) -D$ is contained
in a vertical strip due to the symmetry. The minimal vertical strip has to be compatible with the functional equation, 
hence $\sigma^*=\frac{\sigma_-+\sigma_+}{2}$.
\end{proof}

\begin{proposition}
 If $f$ has a functional equation and the divisor of $f$ is contained in a left half plane then there is a unique 
minimal divisor $D$ (i.e., with $ |n_\rho (D) |$ minimal for all $\rho \in \CC$), and a unique decomposition 
$D=D_0+D_1$, $D_0$ and $D_1$ with disjoint supports, with $D_0$ contained in a left cone
$\sigma^*+C(\theta_1, \theta_2)$, with $\pi/2 <\theta_1 < \theta_2 <3\pi/2 $, and $D_1$ a finite divisor 
contained in the half plane $\Re s >\sigma^*$, such that $\Div ( f) -D$ is infinite and symmetric with 
respect to the vertical line $\Re s =\sigma^*$.
\end{proposition}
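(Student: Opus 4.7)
The plan is to construct $D$ explicitly as the pointwise-minimal symmetrizing divisor and then split it by real part. For each $\rho \in \CC$, let $\rho^* := 2\sigma^* - \bar\rho$ be its reflection across the vertical line $\Re s = \sigma^*$. The symmetry of $\Div(f) - D$ is equivalent to
\[
 n_\rho(D) - n_{\rho^*}(D) = n_\rho(\Div f) - n_{\rho^*}(\Div f)
\]
for every pair $\{\rho, \rho^*\}$, and the inclusion $D \subset \Div(f)$ forces $n_\rho(D)$ to have the same sign as $n_\rho(\Div f)$ with $|n_\rho(D)| \leq |n_\rho(\Div f)|$. These constraints admit a unique componentwise minimum: when $n_\rho(\Div f)$ and $n_{\rho^*}(\Div f)$ have the same sign, only the excess $\bigl||n_\rho(\Div f)| - |n_{\rho^*}(\Div f)|\bigr|$ is absorbed into the side of larger multiplicity; when they have opposite signs, the constraints force full absorption of both multiplicities into $D$; and when $\rho = \rho^*$ (i.e.\ $\Re\rho = \sigma^*$), no contribution is needed. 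This defines the unique minimal divisor $D$.

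Next I would show that $D$ is contained in a left cone. By Definition \ref{def:functional} there exists some divisor $D' \subset \Div(f)$ with $\Div(f) - D'$ symmetric across $\Re s = \sigma^*$ and $D' \subset \alpha + C(\theta_1, \theta_2)$. By componentwise minimality $D \subset D'$, hence $D \subset \alpha + C(\theta_1, \theta_2)$ as well.

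For the decomposition, set $D_0 = D|_{\{\Re s \leq \sigma^*\}}$ and $D_1 = D|_{\{\Re s > \sigma^*\}}$. These have disjoint supports and sum to $D$, and since the minimal $D$ has no support on $\Re s = \sigma^*$, this split is the only one compatible with the half-plane constraints on $D_0$ and $D_1$. Finiteness of $D_1$ follows because its support lies in the bounded region $\{\sigma^* < \Re s < \alpha\} \cap \bigl(\alpha + C(\theta_1, \theta_2)\bigr)$ and $\Div(f)$ is discrete. For $D_0$: points of $D$ in a strip $\{\sigma^* - \epsilon < \Re s < \sigma^*\}$ also lie in a bounded sub-region of the cone at $\alpha$, hence are finitely many, each with angle from $\sigma^*$ strictly in $(\pi/2, 3\pi/2)$; meanwhile, points of $D$ with $\Re s \leq \sigma^* - \epsilon$ have angles from $\sigma^*$ confined to some $(\theta_1'', \theta_2'') \subset (\pi/2, 3\pi/2)$, by an elementary planar comparison with the original cone at $\alpha$. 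Choosing $\theta_1'$ slightly below the minimum of all these angles and $\theta_2'$ slightly above the maximum yields $D_0 \subset \sigma^* + C(\theta_1', \theta_2')$ with $\pi/2 < \theta_1' < \theta_2' < 3\pi/2$, and the uniqueness of the decomposition follows from the half-plane split.

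The main obstacle will be the final geometric step: one must verify that the angles from the new vertex $\sigma^*$ of all points of $D_0$ stay strictly inside $(\pi/2, 3\pi/2)$, which relies on isolating the finitely many near-critical-line points of $D_0$ from the bulk tail (using the cone containment at $\alpha$ together with the discreteness of $\Div(f)$) and then widening the cone angles enough to absorb them. Everything else reduces to bookkeeping of minima under sign-preserving constraints.
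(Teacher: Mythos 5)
Your proof is correct and follows the same approach as the paper: take the componentwise-minimal symmetrizing divisor $D$ and split it by the half-planes at $\Re s = \sigma^*$. The paper's own proof is a terse two sentences — it simply asserts the existence of the minimal $D$, defines $D_0$ and $D_1$ by the half-plane split, and declares the cone containment of $D_0$ ``easy to see'' — whereas your proposal supplies exactly the missing details (the per-reflection-pair construction of the unique pointwise minimum, the finiteness of $D_1$ from the bounded region $\{\sigma^*<\Re s<\alpha\}\cap(\alpha+C(\theta_1,\theta_2))$, and the re-centering of the cone at $\sigma^*$), all of which check out.
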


\begin{proof}
We start with $D$ minimal as in the definition, and we define $D_0$ to be the part of $D$ to the left of  
$\Re s =\sigma^*$ and $D_1$ the remaining part. It is easy to see that $D_0$ is contained in a left cone with 
vertex at $\sigma^*$.
\end{proof}

 \begin{proposition}
 If $f$ has a functional equation and the divisor of $f$ is contained in a left half plane then there exists 
a meromorphic function $\chi$ with $\Div (\chi) =D \subset \Div (f)$, such that the function $g(s)=\chi(s) f(s)$ satisfies
the functional equation 
 $$
g(2\sigma^* - s)=g(s) \ .
 $$
Moreover, we can write $\chi =\chi_0 \cdot R$ with $\Div (\chi_0) =D_0-\tau^*D_1$ and $\Div (R) =D_1 + \tau^*D_1$,
where $\tau$ is the reflexion along $\Re s=\sigma^*$,  
and $R$ is a unique rational function up to a non-zero multiplicative constant.

The meromorphic function $\chi$ (or $\chi_0$) is uniquely determined
up to a factor $\exp h(s-\sigma^*)$ where $h$ is an even entire function. 
If $f$ has convergence exponent $d <+\infty$,  
then  we can take $\chi$  of  convergence exponent $d$, and then $\chi$ 
is uniquely determined up to a factor $\exp P(s-\sigma^*)$
where $P$ is an even polynomial of degree
less than $d$. In particular, when $f$ is of order $1$ then $\chi$ 
and $\chi_0$ are uniquely determined up to a
non zero multiplicative constant.
\end{proposition}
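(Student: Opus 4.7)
The plan is to construct $\chi$ as a product $\chi_0 \cdot R$, where $R$ captures the finite right-of-$\sigma^*$ part of the divisor $D$ and $\chi_0$ is built by Hadamard factorization from the infinite left-of-$\sigma^*$ part; then to upgrade the $\tau$-symmetry of $\Div(\chi f)$ to a genuine functional identity by absorbing an entire correction; and finally to read off all the uniqueness statements from the standard ambiguity in Hadamard factorization.

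First I would set
\[
R(s) \;=\; c \prod_{\rho \in \supp(D_1)} (s-\rho)^{n_\rho(D_1)} \,(s-(2\sigma^*-\rho))^{n_\rho(D_1)},
\]
which is rational with $\Div(R) = D_1 + \tau^* D_1$; since $s\mapsto 2\sigma^*-s$ permutes the paired factors, $R$ is $\tau$-symmetric up to a sign that can be fixed by the choice of $c$. For the transcendental factor $\chi_0$ the prescribed divisor $D_0 - \tau^* D_1$ inherits convergence exponent at most $d$ from $D_0 \subset D \subset \Div(f)$, as the finite $\tau^* D_1$ contributes nothing to the exponent. Weierstrass' theorem applied with the Hadamard canonical factors $E_{d-1}$ then produces a meromorphic $\chi_0$ of convergence exponent $d$ with the required divisor, and setting $\chi := \chi_0 R$ gives a meromorphic function with $\Div(\chi) = D_0 + D_1 = D$.

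Second, the divisor of $g := \chi f$ equals the $\tau$-symmetric divisor $\Div(f) - D$ (under the sign convention implicit in the inclusion $D\subset\Div(f)$), so $g(s)/g(2\sigma^*-s)$ is entire and nowhere zero and therefore equals $\exp(H(s))$ for some entire $H$. The identity $\bigl(g(s)/g(2\sigma^*-s)\bigr)\bigl(g(2\sigma^*-s)/g(s)\bigr)=1$ forces $H(s)+H(2\sigma^*-s)$ to take values in $2\pi i\ZZ$, hence by continuity to equal a fixed $2\pi i n$, so $H(\sigma^*) = \pi i n$ and $\widetilde H(u):= H(\sigma^*+u) - H(\sigma^*)$ is odd. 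Replacing $\chi$ by $\chi\cdot\exp\!\bigl(-\frac{1}{2}\widetilde H(s-\sigma^*)\bigr)$, and absorbing an extra $\pm 1$ when $n$ is odd, preserves $\Div(\chi)=D$ and turns $g(s)/g(2\sigma^*-s)$ into $1$, yielding $g(2\sigma^*-s)=g(s)$.

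For uniqueness, any rational $R'$ with $\Div(R')=\Div(R)$ gives $R/R'$ rational with empty divisor, hence a nonzero constant. If $\widetilde\chi$ is another meromorphic function with $\Div(\widetilde\chi)=D$ and $\widetilde\chi f$ $\tau$-symmetric, then $\widetilde\chi/\chi = \exp(\psi)$ for some entire $\psi$, and comparing the two functional equations yields $\exp\bigl(\psi(s) - \psi(2\sigma^*-s)\bigr) = 1$. The entire function $\psi(s) - \psi(2\sigma^*-s)$ therefore takes values in $2\pi i\ZZ$ and is thus constant; since it vanishes at $s=\sigma^*$, it is identically zero. Hence $\psi(\sigma^*+u)$ is even in $u$, so $\psi(s) = h(s-\sigma^*)$ with $h$ even entire. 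When both $\chi,\widetilde\chi$ are required to have convergence exponent $d$, Hadamard's theorem forces $h$ to be an even polynomial of degree less than $d$; for $d=1$ this makes $h$ constant, giving the claimed uniqueness of $\chi$ and $\chi_0$ up to a nonzero multiplicative constant. The main obstacle is the parity correction in the second step: $g(s)/g(2\sigma^*-s)$ is controlled only up to a global sign (the parity of $n$), so one must absorb a possible factor $-1$ into $\chi$ without altering its divisor; a secondary technical point is sharpening the even entire $h$ to an even polynomial of degree less than $d$, where the precise form of Hadamard's theorem (controlling the polynomial part of a canonical product by the convergence exponent) is essential.
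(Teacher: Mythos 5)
Your construction is more explicit and more complete than the paper's own proof, which only argues uniqueness: the paper translates to $\sigma^*=0$, asserts that the divisor of $\chi$ is determined by minimality, invokes the Weierstrass ambiguity $\hat\chi = e^h\chi$, derives $\exp(h(s)-h(-s))=1$ from comparing the two functional equations, and specializes at $s=0$ to conclude $h$ is even. It never builds $\chi_0$, $R$, or the entire correction, nor does it prove that some $\chi$ with divisor $D$ actually yields $g(2\sigma^*-s)=g(s)$ rather than merely a $\tau$-symmetric divisor for $g$. Your steps 1 and 2 supply exactly this missing existence argument, and your step 3 reproduces the paper's uniqueness argument (the normalization at $s=\sigma^*$ killing the integer constant is the same trick as specializing at $s=0$). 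Both treatments are equally loose on the final point about bounding $\deg P$ by $d$; implicitly one is fixing $\chi$ to be of Hadamard form of genus $d-1$, so the ratio of two such is $\exp$ of a polynomial of degree at most $d-1$.

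There is, however, a genuine problem with the escape hatch you rely on in step 2. You observe correctly that after replacing $\chi$ by $\chi\exp(-\tfrac12\widetilde H(s-\sigma^*))$ one is left with $g(s)/g(2\sigma^*-s)=(-1)^n$, and propose to ``absorb an extra $\pm1$ when $n$ is odd.'' This cannot be done: any further entire nonvanishing correction is $e^{\psi}$, and $\psi(s)-\psi(2\sigma^*-s)$ is anti-invariant under $s\mapsto 2\sigma^*-s$, hence if constant it must vanish; plugging in $s=\sigma^*$ shows the ratio $e^{\psi(s)-\psi(2\sigma^*-s)}$ can only be identically $1$, never $-1$. In fact $(-1)^n$ is exactly the parity of the multiplicity of $\sigma^*$ in $\Div(f)-D$, as one sees by expanding $g$ locally at $\sigma^*$, and when this multiplicity is odd the conclusion $g(2\sigma^*-s)=g(s)$ is simply unattainable (only the anti-symmetric $g(2\sigma^*-s)=-g(s)$ is). The proposition as stated tacitly excludes this case, just as the subsequent theorem explicitly assumes $\sigma^*$ is not part of the divisor; the paper's proof glosses over the same point. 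The honest fix is to note that $n$ even holds automatically under this standing assumption, rather than to claim the sign can be absorbed.
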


\begin{proof}
 {}From proposition \ref{prop:6.4} we know that $\sigma^*$ is uniquely determined as the
center of the critical strip
(which is defined only in terms of the divisor of $f$).
Translating everything by $\sigma^*$ we can assume
that $\sigma^*=0$. By minimality the divisor of $\chi$ is
uniquely determined. Then $\chi$ is uniquely determined up to a factor
$\exp h(s)$ where $h$ is an entire
function. If $\hat \chi (s)=(\exp h(s) )\chi (s)$ gives also a functional equation for $f$, then we have
$$
f(s)=\frac{\chi (-s)}{\chi (s)} f(-s)= \frac{\chi (-s)}{\chi (s)} \frac{\hat \chi (s)}{\hat \chi (-s)} f(s).
$$
Therefore
$$
\exp(h(s)-h(-s)) =1,
$$
so for some $k\in \ZZ$,
$$
h(s)-h(-s)=2\pi i k \, .
$$
Specializing for $s=0$ we get $k=0$ and $h$ is even.

When $f$ is of  convergence exponent $d<+\infty$, and since the divisor 
of $\chi$ is contained in the divisor of $f$, then we can take $\chi$ of convergence exponent at most $d$. 
\end{proof}

If $f$ is real analytic, then it is easy to see that $\chi$ must be real analytic up to the Weierstrass
factor. We will always choose $\chi$ to be real analytic.
Then $g=\chi f$ is real analytic, and we have a four-fold symmetry and $g$ is symmetric with respect to the
vertical line $\Re s =\sigma^*$.

\begin{example}
For the Riemann zeta function $f(s)=\zeta(s)$ we have $\sigma^*=1/2$, $\sigma_-=0$, $\sigma_+=1$, 
$D_0=-2\NN^*$, $D_1= \{1 \}$, and
\begin{align*}
\chi(s) &=\pi^{-s/2}\Gamma (s/2) s(s-1), \\ 
\chi_0(s) &=\pi^{-s/2}\Gamma(s/2), \\  
R(s) &=s(s-1). 
\end{align*}
Note that 
$$
g(s)=\chi(s)\zeta(s)=\pi^{-s/2}\Gamma (s/2) s(s-1) \zeta(s)=2 \xi (s) 
$$
(using Riemann's classical notation for $\xi$).
\end{example}

Let $f$ be a meromorphic function of finite order which has its divisor contained in a left 
half plane and which has a functional equation. In order to simplify
we assume that $\sigma^*$ is not part of the divisor. For $g(s)=\chi (s) f(s)$ we have
$$
g(2\sigma^*-s)=g(s) \ ,
$$
and when we express this symmetry in the Hadamard factorization, we get
$$
Q_{g,\sigma^*} (2\sigma^*-s)=Q_{g,\sigma^*} (s) \ ,
$$
hence if we write
$$
Q_{g,\sigma^*} (s)=\sum_k a_k (s-\sigma^*)^k \ ,
$$
the symmetry implies that all odd coefficients are zero $a_1=a_3=\ldots =0$.

\begin{theorem}\label{thm:functional_eq_0structure}
 Let $f$ be a meromorphic function of exponent of convergence $d=2$, 
which has its divisor contained 
in a left half plane and has a functional equation. We assume that $\sigma^*$ is not part of the divisor.
Write $g=\chi\, f$ as before, and $\chi=\chi_0\, R$, where $R$ is a rational function symmetric with
respect to $\sigma^*$.
Then we have
$$
c_{0}(\chi_0, \sigma^*)+c_{0}(f, \sigma^*)=0.
$$
\end{theorem}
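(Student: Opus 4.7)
The plan is to exploit the multiplicativity of the Hadamard factorization in order to rewrite $c_0(g,\sigma^*)$ as a sum of three discrepancy constants, then to evaluate each piece using either the functional equation or the symmetry of $R$. Since $g=\chi_0\cdot R\cdot f$, divisors add under products, and writing all three Hadamard factorizations at the common exponent $m=d-1=1$ centered at $\sigma^*$, the Weierstrass elementary factors combine into the factorization for the sum of divisors. Consequently $Q_{g,\sigma^*}=Q_{\chi_0,\sigma^*}+Q_{R,\sigma^*}+Q_{f,\sigma^*}$ modulo $2\pi i$. Differentiating gives $P_{g,\sigma^*}=P_{\chi_0,\sigma^*}+P_{R,\sigma^*}+P_{f,\sigma^*}$, and since $d=2$ forces the genus to be at most $1$ in each case, these polynomials are all constants, yielding
$$
c_0(g,\sigma^*)=c_0(\chi_0,\sigma^*)+c_0(R,\sigma^*)+c_0(f,\sigma^*).
$$

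Next I would show $c_0(g,\sigma^*)=0$ using the functional equation. The symmetry $g(2\sigma^*-s)=g(s)$ transfers to the polynomial $Q_{g,\sigma^*}$, which (as observed in the discussion preceding the theorem) means that in the expansion $Q_{g,\sigma^*}(s)=\sum_k a_k(s-\sigma^*)^k$ all odd coefficients vanish; in particular $a_1=0$. Because the genus is at most $1$, $Q_{g,\sigma^*}$ is affine linear in $(s-\sigma^*)$, so $Q'_{g,\sigma^*}$ is the constant $a_1=0$, and therefore $c_0(g,\sigma^*)=-Q'_{g,\sigma^*}=0$.

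Third, I would show $c_0(R,\sigma^*)=0$. Writing the Hadamard factorization of the rational function $R$ at $\sigma^*$ with exponent $m=1$, the elementary factors produce an exponential $\exp\bigl((s-\sigma^*)\sum_\rho n_\rho/(\rho-\sigma^*)\bigr)$ alongside the polynomial piece. Since $R$ is rational, this exponential must be cancelled by $e^{Q_{R,\sigma^*}}$, which forces $Q_{R,\sigma^*}(s)=-(s-\sigma^*)\sum_\rho n_\rho/(\rho-\sigma^*)+\mathrm{const}$, hence $c_0(R,\sigma^*)=\sum_\rho n_\rho/(\rho-\sigma^*)$. By the symmetry $R(2\sigma^*-s)=R(s)$ the divisor of $R$ is invariant under $\rho\mapsto 2\sigma^*-\rho$ with equal multiplicities, and pairing $\rho$ with $2\sigma^*-\rho$ gives $\frac{n_\rho}{\rho-\sigma^*}+\frac{n_\rho}{\sigma^*-\rho}=0$, so the sum vanishes. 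Combining the three steps, $0=c_0(\chi_0,\sigma^*)+0+c_0(f,\sigma^*)$, which is the claim.

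The main obstacle I expect is the first step: one has to justify the clean additivity $Q_{g,\sigma^*}=\sum Q_{\cdot,\sigma^*}$ when the three factors have different natural convergence exponents (the rational $R$ has exponent $0$, while $f$ and $\chi_0$ have exponent $2$). The point to be checked carefully is that once one fixes the common Weierstrass exponent $m=1$, the ``excess'' in the exponential parts of the Weierstrass factors gets absorbed into the (now at most degree one) $Q_{R,\sigma^*}$, so the identity for $Q$'s holds on the nose modulo $2\pi i$; after this, everything is linear and the argument is short.
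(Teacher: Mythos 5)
Your proposal is correct and follows essentially the same path as the paper's proof: additivity of the discrepancy polynomial over the factorization $g=\chi_0\, R\, f$, vanishing of $c_0(g,\sigma^*)$ from the parity of $Q_{g,\sigma^*}$ combined with the genus bound $g\le 1$ forced by $d=2$, and vanishing of the contribution of the rational symmetric factor $R$. The only difference is cosmetic (the paper splits as $g=\chi\cdot f$ and then asserts $P_\chi=P_{\chi_0}$ without elaboration, whereas you split into three factors at once and spell out why $c_0(R,\sigma^*)=\sum_\rho n_\rho/(\rho-\sigma^*)=0$ via the pairing $\rho\leftrightarrow 2\sigma^*-\rho$), so your write-up is a slightly more detailed version of the same argument.
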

 
\begin{proof}
We observe that 
$$
Q_{g,\sigma^*}=Q_{\chi,\sigma^*} +Q_{f,\sigma^*} \ ,
$$
and for the discrepancy polynomials
$$
P_{g,\sigma^*}=P_{\chi,\sigma^*} +P_{f,\sigma^*} \ .
$$
By the above considerations, $Q_{g,\sigma^*} (s)= \sum\limits_{2k \leq g} a_{2k} (s-\sigma^*)^{2k}$.
Taking derivatives, 
 $$
 P_{g,\sigma^*}(s)=c_0 + c_1s+\ldots + c_{g-1}s^{g-1}
= - \sum\limits_{2k \leq g} 2k a_{2k} (s-\sigma^*)^{2k-1}.
 $$

If the exponent of convergence is $d=2$, then $g=1$, so the discrepancy polynomial
is constant and we have $c_0(g)=0$. Also $c_0(g)=c_0(\chi)+c_0(f)$ and
$c_0(\chi)= P_\chi =P_{\chi_0}=c_0(\chi_0)$.
Therefore we obtain the result.
\end{proof}

%

Another important property which follows from the definition of having a functional equation is the following.

\begin{proposition} \textbf{(Group property)}\label{prop:group_property}
 Dirichlet series having a functional equation with a fixed axis of symmetry form a multiplicative group.
\end{proposition}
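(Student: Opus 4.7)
The plan is to verify the two group axioms (closure under multiplication and under inversion) by building the symmetrizing factor $\chi$ of a product or inverse directly from the $\chi_i$ of the factors. The identity is the constant Dirichlet series $f\equiv 1$ with $\chi\equiv 1$, which trivially lies in the set.

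\emph{Closure under multiplication.} Suppose $f_1,f_2$ are Dirichlet series with common symmetry axis $\sigma^*$, and let $\chi_1,\chi_2$ be the associated meromorphic factors, so that $g_i=\chi_i f_i$ satisfies $g_i(2\sigma^*-s)=g_i(s)$. The product $f_1f_2$ is again a Dirichlet series with leading coefficient $1$, and I would take $\chi:=\chi_1\chi_2$. Then $g:=\chi\cdot f_1f_2=g_1g_2$ is manifestly symmetric under $s\mapsto 2\sigma^*-s$. The divisor conditions split additively: $\Div(\chi)=\Div(\chi_1)+\Div(\chi_2)\subset \Div(f_1)+\Div(f_2)=\Div(f_1f_2)$, and $\Div(f_1f_2)-\Div(\chi)$ is the sum of the two symmetric (infinite) divisors $\Div(f_i)-\Div(\chi_i)$, hence itself infinite and symmetric about $\Re s=\sigma^*$. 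The only genuinely geometric step is to check that the union of two left cones of aperture $<\pi$ (with vertices on the real axis) sits inside a single left cone of aperture $<\pi$; this follows from the strict bounds $\pi/2<\theta_1^i<\theta_2^i<3\pi/2$, so $(\min_i\theta_1^i,\max_i\theta_2^i)$ is again a valid angular range, and a common apex can be placed sufficiently far to the left.

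\emph{Closure under inversion.} If $f(s)=1+\sum_n a_n e^{-\lambda_n s}$, then in a right half-plane $1/f$ expands as a convergent Dirichlet series with leading coefficient $1$ (via the geometric series in $f-1$), and extends meromorphically to $\CC$ with $\Div(1/f)=-\Div(f)$. Set $\tilde\chi:=1/\chi$; then $\Div(\tilde\chi)=-\Div(\chi)$, which occupies the same points in $\CC$ as $\Div(\chi)$, hence sits in the same left cone, and has the same absolute multiplicities, so $\Div(\tilde\chi)\subset \Div(1/f)$. The product $\tilde g:=\tilde\chi\cdot(1/f)=1/g$ satisfies $\tilde g(2\sigma^*-s)=\tilde g(s)$, and $\Div(1/f)-\Div(\tilde\chi)=-\bigl(\Div(f)-\Div(\chi)\bigr)$ is infinite and symmetric about $\Re s=\sigma^*$.

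The main conceptual obstacle is really the cone condition in the product step, since nothing else is more than additivity of divisors together with the obvious multiplicativity of the symmetry relation $g(2\sigma^*-s)=g(s)$. Once that geometric lemma is in place, the verification of closure, inverses, and the identity element is formal, completing the proof that the collection of Dirichlet series with functional equation axis $\sigma^*$ forms a multiplicative group.
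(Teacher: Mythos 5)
The paper itself offers no proof of this Proposition, so there is nothing to compare your route against directly; the symmetrizing-factor approach you take is clearly the natural one, and the inversion case ($\tilde\chi = 1/\chi$, $\Div(\tilde\chi)=-\Div(\chi)\subset -\Div(f)=\Div(1/f)$) is fine, because containment of divisors in the paper's sense is preserved under negation. Two remarks on the other steps: (i) the identity $f\equiv 1$ does not literally satisfy Definition~\ref{def:functional}, which demands $\Div(f)-D$ be \emph{infinite}, so the identity has to be adjoined by convention; (ii) the covering cone's apex should be pushed to the \emph{right} of both $\alpha_i$ (not ``to the left''), since a left-opening cone with apex further left would miss points of cone~$i$ near its own apex $\alpha_i$; the widened aperture $(\min_i\theta_1^i,\max_i\theta_2^i)$ is then still $<\pi$ and absorbs the shift.

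The genuine gap is in the closure-under-product step, at the claim $\Div(\chi_1\chi_2)=\Div(\chi_1)+\Div(\chi_2)\subset\Div(f_1)+\Div(f_2)=\Div(f_1f_2)$. The paper's containment $D_1\subset D_2$ is sign-sensitive (zeros map to zeros, poles to poles, with bounded multiplicities), and this relation is \emph{not} preserved under addition of divisors. Concretely, suppose at some left-cone point $\rho$ we have $n_\rho(\Div f_1)=3$ with $n_\rho(D_1)=1$ (so $\Div f_1-D_1$ has order $2$ at $\rho$, paired symmetrically), while $n_\rho(\Div f_2)=-2$ with $n_\rho(D_2)=-2$ (a fully ``trivial'' pole). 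Both containments $D_i\subset\Div f_i$ hold. Yet $n_\rho(D_1+D_2)=-1$ while $n_\rho(\Div(f_1f_2))=1$: opposite signs, so $D_1+D_2\not\subset\Div(f_1f_2)$. Moreover, $n_{\tau(\rho)}(\Div(f_1f_2))=2>1=n_\rho(\Div(f_1f_2))$ (with $\tau$ the reflection across $\Re s=\sigma^*$), so no choice of left-cone divisor $D'$ removes the asymmetry at this pair of points, unless the cone's apex is taken to the right of $\tau(\rho)$ — which works for finitely many such $\rho$ but cannot work if infinitely many occur. So either one needs to rule out this sign-mixing for the Dirichlet series under consideration (e.g.\ by observing that the left-cone divisors $D_0^{(i)}$ in practice have disjoint supports, or consistent signs where they overlap), or the Proposition needs a supplementary hypothesis. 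As it stands, the ``divisor conditions split additively'' line is unjustified and is exactly where the proof must be tightened.
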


Next, we determine when a finite Dirichlet series satisfies a functional equation.

\begin{proposition}  \label{prop:functional-eqn}
A finite Dirichlet series
$$
f(s)=1+\sum_{n=1}^N a_n e^{-\lambda_n s} \ ,
$$
satisfies a functional equation if and only if it is
of the form
 $$
 f(s)= e^{\mu s} \sum_{n=0}^{[(N-1)/2]}  a_i (e^{(-\lambda_n+\mu) s} +
  c\, e^{(\lambda_n-\mu) s})\, ,
 $$
where $c=1$ if $N$ is even, $c=\pm 1$ if $N$ is odd.
\end{proposition}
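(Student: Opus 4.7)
The plan is to prove both implications, with the substance in the necessity direction. For the sufficiency ($\Leftarrow$), direct substitution shows that if $f$ has the claimed symmetric form then $f(2\sigma^{*} - s) = c\, e^{\lambda_{N}(s - \sigma^{*})} f(s)$ for an appropriate $\sigma^{*}$ (read off from $\mu$ and $\lambda_{N}$), so $\mathrm{Div}(f)$ is invariant under reflection about $\Re s = \sigma^{*}$, verifying Definition \ref{def:functional} with $D = \emptyset$.

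For the converse, assume $f$ has a functional equation. I would first observe that the estimates $|f(s) - 1| < 1$ for $\Re s$ sufficiently large and $|a_{N}^{-1} e^{\lambda_{N} s} f(s) - 1| < 1$ for $\Re s$ sufficiently negative show that all zeros of $f$ lie in a bounded vertical strip. Any left cone from Definition \ref{def:functional} meets this strip in a bounded region, so the exceptional divisor $D$ is necessarily finite. Setting $\tilde f(s) := f(2\sigma^{*} - s)$, the symmetry of $\mathrm{Div}(f) - D$ about $\Re s = \sigma^{*}$ gives $\mathrm{Div}(\tilde f / f) = \tau^{*} D - D$, a finite divisor. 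Since $f$ and $\tilde f$ are entire of order $1$, Hadamard factorization yields
$$\tilde f(s) = R(s)\, e^{A + Bs} f(s)$$
for some rational function $R$ with $\mathrm{Div}(R) = \tau^{*} D - D$ and constants $A, B \in \mathbb{C}$.

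The crucial step is then to expand both sides as exponential polynomials: $\tilde f(s) = \sum_{m=0}^{N} a_{m} e^{-2\lambda_{m}\sigma^{*}} e^{\lambda_{m} s}$ and $e^{A+Bs} f(s) = e^{A} \sum_{n=0}^{N} a_{n} e^{(B - \lambda_{n}) s}$ (with the convention $a_{0} = 1$, $\lambda_{0} = 0$), and to invoke the linear independence of $\{ e^{\mu s} \}_{\mu \in \mathbb{C}}$ over the field of rational functions. Comparing term-by-term forces the exponent sets $\{ \lambda_{m} \}$ and $\{ B - \lambda_{n} \}$ to coincide, whence strict monotonicity of the $\lambda_{n}$ gives $B = \lambda_{N}$ together with the frequency symmetry $\lambda_{n} + \lambda_{N-n} = \lambda_{N}$; matching coefficients then forces $R(s)$ to equal the constant $a_{n} e^{-2\lambda_{n}\sigma^{*}} / (e^{A} a_{N-n})$ simultaneously for every $n$. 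Applying $s \mapsto 2\sigma^{*} - s$ to the identity twice yields $R^{2} e^{2A + 2B\sigma^{*}} = 1$, so $R e^{A} = \epsilon\, e^{-\lambda_{N}\sigma^{*}}$ with $\epsilon \in \{+1, -1\}$, and the coefficient relation becomes $a_{n} = \epsilon\, e^{(2\lambda_{n} - \lambda_{N})\sigma^{*}} a_{N-n}$. For $N$ even the middle index $n = N/2$ (where $\lambda_{N/2} = \lambda_{N}/2$) collapses this to $a_{N/2} = \epsilon\, a_{N/2}$, forcing $\epsilon = 1$; for $N$ odd both values $\epsilon = \pm 1$ remain admissible. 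Finally, pairing $n \leftrightarrow N - n$ in $f(s) = \sum_{n=0}^{N} a_{n} e^{-\lambda_{n} s}$ and factoring out a suitable exponential $e^{\mu s}$ converts the resulting identity into the stated symmetric form, with $c = \epsilon$.

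The main obstacle will be the simultaneous extraction, in a single application of linear independence, of the frequency symmetry, the constancy of $R$, and the coefficient relation from the identity $\tilde f = R\, e^{A + Bs} f$; once those are in hand, the remaining analysis (the involutivity bookkeeping that pins down the sign $\epsilon$ and the algebraic reshuffling into the explicit symmetric form) is entirely routine.
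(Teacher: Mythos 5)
Your proof is correct, and the overall strategy parallels the paper's: zeros are confined to a vertical strip, hence the exceptional divisor is finite, hence the ratio between $f$ and its reflection is a rational function times an exponential, and one then compares exponential-polynomial coefficients. But your execution differs in ways that make the argument cleaner and more self-contained. The paper imports the compensating factor $\chi=(Q_1/Q_2)e^{\mu s}$ from the earlier existence proposition and expands $g(s)=g(-s)$ with $g=\chi f$ term by term, deducing $Q_1(s)Q_2(-s)=c\,Q_2(s)Q_1(-s)$ with $c=\pm1$ and the frequency and coefficient symmetries by hand. You bypass $\chi$ entirely, work directly with $\tilde f(s)=f(2\sigma^*-s)$, extract the form $R\,e^{A+Bs}$ from the Hadamard factorization of $\tilde f/f$, and make a single appeal to linear independence of distinct exponentials over the field of rational functions to get the frequency symmetry $B=\lambda_N$, the constancy of $R$, and the coefficient relation all at once; the involution-squared identity $R^2e^{2A+2B\sigma^*}=1$ then pins down $\epsilon=\pm 1$ more transparently than the paper's unexplained ``it follows easily.'' One point both treatments leave implicit and that you should state: forcing $\epsilon=1$ when $N$ is even via the middle index $n=N/2$ requires $a_{N/2}\neq 0$, which holds under the natural convention that $N$ counts the frequencies actually present (all $a_n\neq 0$) but is not automatic from the way the series is written.
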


\begin{proof}
 The Dirichlet series $f(s)$ is of order $1$. Suppose that there is
 some $\chi(s)$ of order $1$ with zeros and poles in a left  cone
 such that $g(s)=\chi(s) f(s)$ is symmetric
with respect to $\Re s=\sigma^*$. By translating, we can assume $\sigma^*=0$.

The zeros of $f(s)$ lie in a strip, since $e^{-\lambda_n s} f(-s)$ is also a Dirichlet series.
Therefore $\chi(s)$ has finitely
many zeros and poles, and hence $\chi(s)=\frac{Q_1(s)}{Q_2(s)} e^{\mu s}$, for some
polynomials $Q_1(s), Q_2(s)$. The functional equation $g(s)=g(-s)$ reads
 $$
 Q_1(s)Q_2(-s) \sum_{n=0}^N a_n e^{(\mu-\lambda_n) s} =
 Q_2(s)Q_1(-s) \sum_{n=0}^N a_n e^{(\lambda_n-\mu) s} \, ,
 $$
where we have set $a_0=1$, $\lambda_0=0$.

{}From this it follows that $Q_1(s)Q_2(-s) =c\, Q_2(s)Q_1(-s)$, $c\in \CC^*$. It
follows easily that $c=\pm 1$.
Also $0,\lambda_1,\ldots, \lambda_N$ is a sequence symmetric with respect to $\mu=\lambda_N/2$.
So $\lambda_{N-i}=2\mu -\lambda_{i}$ and $a_{N-i}=a_i$.

If $N$ even, then $\lambda_{N/2}=\mu$, $c=1$,  and 
 $$
  \sum_{n=0}^N a_n e^{-\lambda_n s} = e^{\mu s} \sum_{i=0}^{N/2-1}  a_i (e^{(-\lambda_i+\mu) s} +
  e^{(\lambda_i-\mu) s}) + a_{N/2} e^{\mu s}\, .
 $$
If $N$ is odd, then 
 $$
  \sum_{n=0}^N a_n e^{-\lambda_n s} = e^{\mu s} \sum_{i=0}^{(N-1)/2}  a_i (e^{(-\lambda_i+\mu) s} +
  c \, e^{(\lambda_i-\mu) s}) \, ,
 $$
where if $c=-1$, we have $\chi(s)=s\, e^{\mu s}$.
\end{proof}

\subsection*{An example without functional equation}

Consider the elementary Dirichlet series
 \begin{equation} \label{eqn:example}
f(s)= 1 + a_1 e^{-\lambda_1 s}+a_2 e^{-\lambda_2 s}
 \end{equation}
with $0<\lambda_1<\lambda_2$ and $a_j\not=0$. It is an entire function on $\CC$ of order $1$.

If $\lambda_1,\lambda_2$ are rationally dependent, then we may write
$f(s)= 1+ a_1 \left (e^{\lambda s}\right )^{k_1} +a_2 \left (e^{\lambda s}\right )^{k_2}$, for
$\lambda_1=k_1\lambda$, $\lambda_2=k_2\lambda$, $k_1,k_2>0$ and coprime. We can compute
the zeros solving the algebraic equation $1+a_1 X^{k_1}+a_2 X^{k_2} =0$.
Therefore, the zeros of $f(s)$ lie in at most $k_2$ vertical lines, and they
form $k_2$ arithmetic sequences of the same purely imaginary step.

If $\lambda_1,\lambda_2$ are rationally independent, then we cannot compute
explicitly the zeros in general. We know that they lie in a half-plane
$\Re s<\sigma_1$. Also $a_2^{-1} e^{\lambda_2 s} f(s)$ converges
to $1$ for $\Re s\to -\infty$. So the zeros of $f(s)$ are located
in a half-plane $\Re s>\sigma_2$. Hence in a strip. By Corollary
\ref{cor:divisor}, there are infinitely many zeros in that strip.

Now, let $(\rho)$ be the set of zeros. Then $\Lambda=\{\bk=(k_{1},k_{2}) \in \NN^2 \, |
\, (k_{1},k_{2}) \neq (0,0)\}$, and
 $$
 b_\bk=\frac{(-1)^{k_1+k_2}}{k_1+k_2} \binom{k_1+k_2}{k_1} a_1^{k_1}a_2^{k_2}
 $$
and the Poisson-Newton formula on $\RR^*_+$ is
$$
 \sum_{\rho} n_\rho e^{\rho t} = \sum_{\bk}
 (\lambda_1k_1+\lambda_2k_2) b_{\bk} \delta_{\lambda_1k_1+\lambda_2k_2} \,.
$$
By Proposition \ref{prop:functional-eqn},
the Dirichlet series (\ref{eqn:example}) does not have a functional equation
unless $\lambda_2=2\lambda_1$.

\subsection*{Examples of infinite Dirichlet series with no functional equation}
Consider an infinite Dirichlet series $g$ with meromorphic extension 
to $\CC$ satisfying a functional equation 
with an infinite number of poles (taking any infinite Dirichlet series $g$ satisfying a functional equation, 
either $g$ or $g^{-1}$
has this property). For example we can take $g=1/\zeta$. Consider also the previous example 
$f(s)= 1 + a_1 e^{-\lambda_1 s}+a_2 e^{-\lambda_2 s}$, with $a_1, a_2 \not=0$, with frequencies 
$\lambda_1$ and $\lambda_2$ rationally independent with those of $g$, and $\lambda_2\not= 2\lambda_1$
so that $f$ does not have a functional equation.

Then the product $h=f\,g$ is an infinite Dirichlet series with meromorphic extension to $\CC$. 
If $h$ had a functional equation then the axes of symmetry would be the same as the one for $g$ 
(because of the symmetry of the poles), but then 
$f=h \, g^{-1}$ will have a functional equation 
from the group property \ref{prop:group_property}, which is a contradiction.  

These functions without functional equation 
do have  a Poisson-Newton formula, but in general the lack of knowledge about the location of its divisor, 
and the lack of structure of the set of frequencies makes the explicit formula of limited usefulness.

An interesting question is to determine when a classical Hurwitz zeta function has a functional equation.

\section{Explicit formulas for Riemann zeros} \label{sec:explicit-formulas}

In this section we apply our Poisson-Newton formula to the Riemann zeta function. We obtain 
a non-classical form of the Explicit Formula in analytic number theory. The classical forms can 
be derived from our distributional formula.

Explicit formulas in analytic number theory go back to the original memoir of Riemann \cite{R} on the 
analytic properties of Riemann zeta function where it is the central point of the derivation of Riemann's asymptotic 
formula for the growth of the number of primes. It relates prime numbers with non-trivial zeros of Riemann zeta function. 
Despite the mystery about the precise location of the non-trivial zeros, many of such formulas were developed at the 
end of the XIX century and the beginning of the XX century (see \cite{L}). Later, general explicit formulas
were developed by A.P. Guinand \cite{G}, J. Delsarte \cite{D}, A. Weil \cite{W} and K. Barner \cite{Ba}, 
these last ones in 
general distributional form. A classical form of this Explicit Formula is the following by K. Barner \cite{Ba}.

The Riemann zeta function is defined for $\Re s >1$ by
$$
\zeta(s)=\sum_{n\geq 1} n^{-s}=\sum_{n\geq 1} e^{-s\log n } \, ,
$$
which is a Dirichlet series with $\lambda_n =\log (n+1)$ and $\sigma_1=1$ in our notation. It
has a meromorphic extension to the complex plane $s\in \CC$ with a single simple pole at $s=1$. 
It has order $o=1$, convergence exponent $d=2$, and genus $g=1$ (see \cite{T}).

The Riemann zeta function has a single simple pole at $\rho=1$, and simple real zeros at $\rho=-2n$,
for $n=1, 2,\ldots $,
and non-real zeros in the critical strip $0<\Re s < 1$, $\rho =1/2+i\gamma$. 
The Riemann Hypothesis
conjectures that $\gamma \in \RR$, i.e., that all non-real zeros have real part $1/2$.

\begin{theorem} \label{thm:PN-zeta-Riemann}
We have
$$
\sum_{\rho} n_\rho e^{(\rho -1/2)|t|} = 2 c_0(\zeta, 1/2)\,  \delta_0 - 
\sum_{p,k\geq 1} (\log p) p^{-k/2} \left ( \delta_{k\log p} + \delta_{-k\log p} \right )  ,
$$
where
$$
c_0(\zeta , 1/2)= = -\frac{\log \pi}{2} -\frac{\pi}{4}-\frac{\gamma}{2} -\frac32 \log 2  .
$$
\end{theorem}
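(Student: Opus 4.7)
The plan is to apply the symmetric parametric Poisson--Newton formula (Corollary \ref{cor:thm:symmetric_parameters}) to $f = \zeta$ with $\alpha = 1$ and $\beta = 1/2$; since $\zeta$ has genus $g = 1$ and convergence exponent $d = 2$, the sum of derivatives of $\delta_0$ on the right collapses to a single term $2\, c_0(\zeta, 1/2)\, \delta_0$. What remains is (i) to identify the prime-power sum as the Dirichlet contribution and (ii) to evaluate the constant $c_0(\zeta, 1/2)$.

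For (i) the cleanest route is through the Euler product
\begin{equation*}
-\log \zeta(s) = \sum_p \log(1 - p^{-s}) = -\sum_{p,\, k \geq 1} \frac{1}{k}\, e^{-k(\log p)\, s},
\end{equation*}
which directly reads off the Dirichlet expansion (\ref{eqn:bn}) of $-\log \zeta$: the only exponents $L = \langle \boldsymbol{\lambda}, \bk\rangle$ carrying nonzero net coefficient are $L = k\log p$, each attained uniquely by unique factorization, with effective coefficient $-1/k$. Substituting into the parametric formula, the contribution at each $\pm k\log p$ becomes
\begin{equation*}
k(\log p)\, e^{-k(\log p)/2}\, \Bigl(-\tfrac{1}{k}\Bigr)\, \delta_{\pm k\log p} \;=\; -(\log p)\, p^{-k/2}\, \delta_{\pm k\log p},
\end{equation*}
which is precisely the double sum appearing on the right in the theorem.

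For (ii) I would invoke Theorem \ref{thm:functional_eq_0structure}. Writing $\chi = \chi_0 \cdot R$ with $\chi_0(s) = \pi^{-s/2}\Gamma(s/2)$ and $R(s) = s(s-1)$, the rational factor $R$ has genus $0$, so $c_0(R, 1/2) = 0$ and the theorem gives $c_0(\zeta, 1/2) = -c_0(\chi_0, 1/2)$. Since $1/2$ is not in the divisor of $\chi_0$ and $d = 2$, every summand of $G(s)$ in (\ref{eqn:psi-psi2}) carries a factor $(s - 1/2)$, so $G(1/2) = 0$. Because $P_{\chi_0, 1/2}$ is a constant (as $g = 1$), evaluating at $s = 1/2$ gives
\begin{equation*}
c_0(\chi_0, 1/2) \;=\; -\frac{\chi_0'(1/2)}{\chi_0(1/2)} \;=\; \frac{\log \pi}{2} - \frac{1}{2}\psi(1/4),
\end{equation*}
where $\psi = \Gamma'/\Gamma$. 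The Gauss digamma value $\psi(1/4) = -\gamma - 3\log 2 - \pi/2$ then yields the stated expression for $c_0(\zeta, 1/2)$.

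The main obstacle is the computation of $c_0(\chi_0, 1/2)$: one must check that only $\chi_0$ contributes (the rational factor $R$ being of genus $0$), that $G$ vanishes at $s = \sigma$ whenever $\sigma$ is off the divisor and $d = 2$, and finally recall the closed form of $\psi(1/4)$; each step is routine, but they must be assembled carefully for the constant to come out with the correct sign and value.
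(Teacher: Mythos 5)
Your proof is correct and follows the paper's overall structure: Euler product to read off the coefficients $b_\bk$, Corollary \ref{cor:thm:symmetric_parameters} with $\alpha=1$, $\beta=\sigma=1/2$, and Theorem \ref{thm:functional_eq_0structure} to reduce the constant to $-c_0(\chi_0,1/2)$. Where you deviate is in the final evaluation of $c_0(\chi_0,1/2)$, and your route is genuinely slicker than the paper's. The paper first computes $c_0(\chi_0,0)$ from the Hadamard product of $\Gamma$, then transports it to $\sigma=1/2$ via the change-of-basepoint identity (\ref{eqn:c4}) and resums a series into $\psi(\sigma/2)$. You instead observe that since $d=2$ and $\sigma=1/2$ is not on the divisor, every summand of $G(s)$ in (\ref{eqn:psi-psi2}) has a factor $(s-\sigma)$, so $G(\sigma)=0$, and since $g=1$ the discrepancy polynomial is the constant $c_0(\sigma)$; evaluating $P_{f,\sigma}=G-f'/f$ at $s=\sigma$ gives at once $c_0(\chi_0,1/2)=-\chi_0'(1/2)/\chi_0(1/2)=\tfrac{\log\pi}{2}-\tfrac12\psi(1/4)$. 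This avoids both the factorization-at-$0$ step and the transport lemma, and it is a reusable shortcut: whenever $d=2$, $g=1$, and $\sigma$ is off the divisor, $c_0(\sigma)=-f'(\sigma)/f(\sigma)$. One small redundancy: Theorem \ref{thm:functional_eq_0structure} is already stated in terms of $\chi_0$, so your remark that $c_0(R,1/2)=0$ for the rational factor $R$ is an unnecessary (though harmless) recapitulation of part of that theorem's proof.
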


\begin{proof}
For $\Re s >1$ we have the Euler product which gives the relation of the zeta function with prime numbers,
$$
\zeta(s)=\prod_p (1-p^{-s})^{-1} \, ,
$$
where the product is running over the prime numbers $p$. Thus
$$
-\log \zeta (s) = -\sum_{p, \, k\geq 1} \frac{p^{-ks}}{k} =-\sum_{p, \, k\geq 1} \frac1k
e^{-k(\log p)s} \, .
$$
The vector of fundamental frequencies is $\boldsymbol{\lambda}=(\log 2, \log 3 ,\log 4 , 
\ldots )$. We have
$b_\bk=-1/k$ for $\la \boldsymbol{\lambda}, \bk\ra= k\log p$, and
$b_\bk=0$ otherwise. Therefore the stated formula follows by applying 
the Poisson-Newton formula with parameters, corollary \ref{cor:thm:symmetric_parameters}, 
for $\beta=1/2$ and $\sigma=1/2$.

For computing the value of $c_0(\zeta , 1/2)$, we use that the
Riemann zeta function has a functional equation with $\sigma^*=1/2$, $\sigma_-=0$ and $\sigma_+=1$. 
We have, using the notations of section \ref{sec:functional}, 
\begin{align*}
g(s) &=g(1-s), \\
g(s)&= \chi(s) \zeta(s), \\
\chi(s) &=\pi^{-s/2}\Gamma (s/2) s(s-1), \\ 
\chi_0(s) &=\pi^{-s/2}\Gamma(s/2), \\  
R(s) &=s(s-1). 
\end{align*}
 By theorem \ref{thm:functional_eq_0structure}, $c_0(\zeta,1/2)=-c_0(\chi_0,1/2)$. 
 The value of 
 $$
 c_0(\chi_0,0)=\frac{\log \pi}{2}+\frac{\gamma}{2}
 $$
follows from the Hadamard factorization of the $\Gamma$-function
 \begin{equation}\label{psi}
\frac{1}{\Gamma(s/2)} = \frac{s}{2} e^{\frac{\gamma}{2} s} \prod_{n\geq1} E_1(s/(-2n)) ,
 \end{equation}
thus 
\begin{equation}\label{eqn:c3}
c_0 \left (1/\Gamma(s/2) , 0\right ) = -\frac{\gamma}{2} \ .
\end{equation}
 
The zeros of $\chi_0$ are the negative integers $-2n$, $n\geq 0$, and 
are simple. Hence 
the formula (\ref{eqn:c4}) reads (for $\sigma$ not a pole of $\chi_0$)
 \begin{align*}
 -c_0(\chi_0,\sigma)+c_0(\chi_0,0) &=-\frac{1}{\sigma} + \sum_{n=1}^\infty (-1) 
\frac{-\sigma}{(-2n)(-2n-\sigma)} \\
 &= -\frac{1}{\sigma}+ \sum_{n=1}^\infty \left(\frac{1}{2n} -\frac1{2n+\sigma}\right) 
 =  \frac12 \frac{\Gamma'(\sigma /2)}{\Gamma(\sigma /2)} +\frac{\gamma}{2} ,
  \end{align*}
where the last formula follows from the expression for the logarithmic derivative of the $\Gamma$-function, 
the digamma function $\psi$,
 \begin{equation}\label{psi-psi}
\psi(s)=\frac{\Gamma' (s)}{\Gamma (s)} =-\frac1s -\gamma +\sum_{n=1}^{+\infty} \left (\frac{1}{n} -\frac{1}{n+s} \right ) ,
 \end{equation}
which results from (\ref{psi}).
 
Finally we have, for $\sigma \notin -2\ZZ$
$$
c_0(\chi_0, \sigma)=\frac{\log \pi}{2} - \frac12 \psi (\sigma/2) \, .
$$
In particular, for $\sigma=1/2$, we have (see \cite{AS}, combine entries 6.3.3, 6.3.7 and 6.3.8, p.\ 258) that
$\psi(1/4)=-\frac{\pi}{2}-3\log 2-\gamma$. Hence
$$
c_0(\zeta , 1/2)=-c_0(\chi_0, 1/2)= -\frac{\log \pi}{2} -\frac{\pi}{4}-\frac{\gamma}{2} -\frac32 \log 2 \ .
$$
\end{proof}

Note that our ``explicit formula'' (theorem \ref{thm:PN-zeta-Riemann}) is more concise than the classical formulation.
Even more if we use corollary \ref{cor:thm:symmetric_parameters} with $\beta=0$, as follows:

\begin{theorem}
We have
$$
\sum_{\rho} n_\rho e^{\rho |t|} = 2 c_0(\zeta, 0)\,  \delta_0 - \sum_{p,k\geq 1} (\log p)  
\left ( \delta_{k\log p} + \delta_{-k\log p} \right ) \ ,
$$
and 
$$
c_0(\zeta , 0) = -\log (2\pi ) \ .
$$
\end{theorem}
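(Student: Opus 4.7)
The proof splits naturally into two parts. For the distributional identity, I would specialize corollary \ref{cor:thm:symmetric_parameters} to $f=\zeta$ with $\alpha=1$ and $\beta=0$. The Dirichlet coefficients $b_{\bk}$ are inherited verbatim from the proof of theorem \ref{thm:PN-zeta-Riemann}: from the Euler product one reads off $b_{\bk}=-1/k$ whenever $\la \boldsymbol{\lambda},\bk\ra = k\log p$ for a prime $p$, and $b_{\bk}=0$ otherwise. Setting $\beta=0$ makes the weight $e^{-\la \boldsymbol{\lambda},|\bk|\ra \beta}$ equal to $1$, so each arithmetic contribution collapses to $-\log p\,(\delta_{k\log p}+\delta_{-k\log p})$, with $2c_0(\zeta,0)\,\delta_0$ as the remaining term at the origin.

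The substantive part is the evaluation $c_0(\zeta,0)=-\log(2\pi)$. I would read this directly off the Hadamard factorization (\ref{eqn:hadamar}) at $\sigma=0$: since $s=0$ is neither zero nor pole of $\zeta$ and the genus is $g=1$, the polynomial $Q_{\zeta,0}(s)=a_0+a_1 s$ has degree at most one, the discrepancy polynomial is the constant $P_{\zeta,0}\equiv -a_1$, and $c_0(\zeta,0)=-a_1$. Formula (\ref{eqn:psi-psi2}) applied to $\zeta$ at $\sigma=0$ gives $\zeta'(s)/\zeta(s)=Q'_{\zeta,0}(s)+G(s)$, where the meromorphic remainder $G(s)=-\sum_{\rho\neq 0} n_\rho\, s/[\rho(\rho-s)]$ vanishes at $s=0$. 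Evaluating at the origin therefore yields $a_1=\zeta'(0)/\zeta(0)$.

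To close, I would invoke the classical special values $\zeta(0)=-1/2$ and $\zeta'(0)=-\tfrac{1}{2}\log(2\pi)$ (both consequences of the functional equation already set up in the previous theorem), which together give $a_1=\log(2\pi)$ and hence $c_0(\zeta,0)=-\log(2\pi)$. The main obstacle, or really the key modeling choice, is deciding how to evaluate $c_0(\zeta,0)$: the tempting alternative is to deduce it from the already computed $c_0(\zeta,1/2)$ via the change-of-base formula (\ref{eqn:c4}), but that route introduces a sum over non-trivial zeros with no elementary closed form. The Hadamard argument above sidesteps this by packaging all the functional-equation content into the single classical value of $\zeta'(0)$.
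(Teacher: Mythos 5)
Your proof is correct, and the route you take to $c_0(\zeta,0)=-\log(2\pi)$ is genuinely different from the paper's. The paper starts from Titchmarsh's explicit Hadamard factorization
$\zeta(s)=\frac{e^{bs}}{s(s-1)\Gamma(s/2)}\prod_\rho E_1(s/\rho)^{n_\rho}$ with the known constant $b=\log(2\pi)-1-\gamma/2$, absorbs the pole at $s=1$ via the identity $\frac{1}{s-1}=-e^s\,(E_1(s))^{-1}$, combines this with the previously computed $c_0(1/\Gamma(s/2),0)=-\gamma/2$, and sums the exponential prefactors to get $c_0(\zeta,0)=-b-1-\gamma/2=-\log(2\pi)$. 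You instead exploit the structure of (\ref{eqn:psi-psi2}) directly: since $\sigma=0$ is not in the divisor and $d=2\geq 2$, the remainder $G(s)=-\sum_{\rho\neq 0}n_\rho\,s/[\rho(\rho-s)]$ vanishes at $s=0$ (convergence being guaranteed by $\sum|n_\rho||\rho|^{-2}<\infty$), so the constant $a_1=Q'_{\zeta,0}(0)$ equals $\zeta'(0)/\zeta(0)$, and you feed in $\zeta(0)=-1/2$, $\zeta'(0)=-\tfrac12\log(2\pi)$. Your approach has the advantage of bypassing the explicit Titchmarsh constant and the bookkeeping of Weierstrass factors; it reduces the computation to a single evaluation of $f'/f$ at the base point, which is a clean general principle (valid whenever $\sigma$ is not in the divisor, $d\geq 2$, and $g=1$). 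The trade-off is that it outsources the content to the classical special values of $\zeta$ and $\zeta'$ at $0$, whereas the paper's route stays more internal to the Hadamard-factorization machinery it has already built up. Your handling of the distributional identity itself, by specializing corollary \ref{cor:thm:symmetric_parameters} to $\alpha=1$, $\beta=0$, $\sigma=0$ with the Euler-product coefficients $b_{\bk}=-1/k$ at frequencies $k\log p$, matches the paper.
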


\begin{proof}
We can compute $c_0(\zeta , 0)$ from the known Hadamard factorization of the Riemann zeta function. We have (see \cite[p.\ 31]{T})
$$
\zeta(s) = \frac{e^{bs}}{2(s-1)\Gamma(s/2+1)} \prod_\rho  E_1(s/\rho)^{n_\rho} 
= \frac{e^{bs}}{s(s-1)\Gamma(s/2)} \prod_\rho  E_1(s/\rho)^{n_\rho}\ ,
$$
where the product is over the non-trivial zeros and $b=\log (2\pi )-1-\gamma /2$. 

Now, we have
$$
\frac{1}{s-1} =- e^{s} \left (E_1(s/1)\right )^{-1}\ ,
$$
thus
$$
c_0(\zeta , 0)=-Q_\zeta =-b  -1+c_0\left (1/\Gamma(s/2) , 0\right )  .
$$
Using (\ref{eqn:c3}) we get $c_0(\zeta, 0)=-\log ( 2 \pi )$.
\end{proof}

We can compute explicitly the contribution of the real divisor to the distribution on the left handside of Theorem 
\ref{thm:PN-zeta-Riemann},
$$
W_0(t)=-e^{|t|/2}+e^{-|t|/2}\sum_{n\geq 1} e^{-2n|t|}=-e^{|t|/2} + e^{-5|t|/2}\frac{1}{1-e^{-2|t|}}
=-e^{|t|/2} + e^{-\frac{3}{2}|t|} \frac{1}{2\sinh |t|} \, .
$$
So the associated Poisson-Newton formula on $\RR$ is
\begin{align*}
\sum_\gamma e^{ i\gamma |t|} + W_0(t) &= 2 c_0(\zeta,1/2)\,  \delta_0 - 
\sum_{p,k\geq 1} (\log p) p^{-k/2} \left ( \delta_{k\log p} + \delta_{-k\log p} \right ) ,
\end{align*}
where $\rho =1/2+i\gamma$ run over the non-trivial zeros of $\zeta$, the $p$ over prime numbers.
Following the tradition we repeat these zeros according to their multiplicity,
so we may skip the multiplicities $n_\rho =1$.

Let us see now how one can recover the classical formulation from our Poisson-Newton formula.

\begin{theorem} \label{thm:a} For a test function $\varphi$ 
such that $h(t)\varphi(t) \in \cS$, where $h$ is smooth with $h(t)= e^{-(1/2+\epsilon)|t|}$ for $|t|\geq 1$,
we have
\begin{align*}
\sum_\gamma \hat \varphi (\gamma) =  & \, \hat \varphi (i/2) + \hat \varphi (-i/2)+ 
\frac{1}{2\pi} \int_\RR \Psi(t) \hat \varphi(t) \ dt \\
&-\sum_{p,k\geq 1} (\log p) p^{-k/2} \left (\varphi (k\log p) +
\varphi (-k\log p) \right )\, ,
\end{align*}
where 
$$
\Psi(t) =-\log \pi +\Re \left ( \frac{\Gamma'}{\Gamma} (1/4+it/2)\right ) .
$$
\end{theorem}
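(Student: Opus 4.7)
The plan is to pair the distributional identity of Theorem \ref{thm:PN-zeta-Riemann} with $\varphi$, split the divisor of $\zeta$ into the non-trivial zeros $\rho = 1/2 + i\gamma$, the simple pole at $s=1$, and the trivial zeros $\rho = -2n$ for $n\geq 1$, and reorganize each piece in terms of $\hat\varphi$ and $\Psi$. The prime-power sum on the right of Theorem \ref{thm:PN-zeta-Riemann} transports through the pairing unchanged, so the whole work is on the left-hand side.

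For the non-trivial zeros, the zero set is symmetric under $\gamma \to -\gamma$ by the functional equation, so splitting
$$
\int_\RR e^{i\gamma|t|}\varphi(t)\,dt = \int_0^\infty e^{i\gamma t}\bigl[\varphi(t)+\varphi(-t)\bigr]\,dt
$$
and pairing $\gamma$ with $-\gamma$, the $\sin$-components cancel and the sum becomes $\sum_\gamma \hat\varphi(\gamma)$. For the pole at $s=1$, the analogous splitting yields the algebraic identity
$$
\int_\RR e^{|t|/2}\varphi\,dt + \int_\RR e^{-|t|/2}\varphi\,dt = \hat\varphi(i/2) + \hat\varphi(-i/2),
$$
which rewrites the pole contribution $-\int e^{|t|/2}\varphi\,dt$ as $-\hat\varphi(i/2) - \hat\varphi(-i/2) + \int e^{-|t|/2}\varphi\,dt$. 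The first two terms furnish the $\hat\varphi(\pm i/2)$ appearing in the theorem, and the residual $\int e^{-|t|/2}\varphi\,dt$ merges with the trivial-zero sum into $\sum_{n\geq 0}\int e^{-(2n+1/2)|t|}\varphi\,dt$, to be interpreted distributionally.

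The heart of the proof will be identifying this distributional sum with $\tfrac{1}{2\pi}\int\Psi\hat\varphi\,du$ modulo a $2c_0(\zeta,1/2)\varphi(0)$ term that cancels against the right-hand side of Theorem \ref{thm:PN-zeta-Riemann}. For this I would apply Corollary \ref{cor:symmetric-parameters-general} to $\chi_0(s) = \pi^{-s/2}\Gamma(s/2)$, which is real analytic with convergence exponent $d=2$ and divisor (simple poles at $-2n$, $n\geq 0$) strictly to the left of $\Re s = 1/2$. A direct calculation from $\chi_0'/\chi_0(s) = -\tfrac{\log\pi}{2} + \tfrac{1}{2}\psi(s/2)$ gives $2\Re(\chi_0'/\chi_0)(1/2+it) = \Psi(t)$, so Corollary \ref{cor:symmetric-parameters-general} with $\alpha=1$ and $\beta = \sigma = 1/2$ produces the distributional identity
$$
-\sum_{n\geq 0}e^{-(2n+1/2)|t|} = 2\,c_0(\chi_0,1/2)\,\delta_0 + \frac{1}{2\pi}\int_\RR \Psi(u)\,e^{iut}\,du
$$
on $\RR$. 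Pairing with $\varphi$ (using that $\Psi$ is even) converts the second term into $\tfrac{1}{2\pi}\int\Psi\hat\varphi\,du$, and substituting $c_0(\chi_0,1/2) = -c_0(\zeta,1/2)$ from the functional equation (already used in Theorem \ref{thm:PN-zeta-Riemann}), the two $2c_0(\zeta,1/2)\varphi(0)$ terms cancel precisely, leaving the formula of the theorem.

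The main obstacle is the distributional meaning of $\sum_{n\geq 0}e^{-(2n+1/2)|t|}$ at $t=0$: the formal series has a non-integrable $1/(2|t|)$ singularity there, so naive termwise pairings with $\varphi$ diverge. This is handled automatically by the very construction of the Newton--Cramer distribution of $\chi_0$ in Section \ref{sec:newton-cramer} (as a high-order distributional derivative of a uniformly convergent series), so Corollary \ref{cor:symmetric-parameters-general} supplies the correctly regularized identity and no further bookkeeping is required.
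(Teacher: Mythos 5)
Your proof is correct and follows essentially the same route as the paper's: pair Theorem \ref{thm:PN-zeta-Riemann} with $\varphi$, use the $\gamma\mapsto-\gamma$ symmetry to produce $\sum_\gamma\hat\varphi(\gamma)$, and apply Corollary \ref{cor:symmetric-parameters-general} to $\chi_0=\pi^{-s/2}\Gamma(s/2)$ with $\alpha=1$, $\beta=\sigma=1/2$ together with $c_0(\chi_0,1/2)=-c_0(\zeta,1/2)$ from Theorem \ref{thm:functional_eq_0structure}. The only cosmetic difference is that you split the pole contribution $-e^{|t|/2}$ by a bare algebraic identity $e^{|t|/2}=(e^{|t|/2}+e^{-|t|/2})-e^{-|t|/2}$ and fold the leftover into the trivial-zero sum to form $\sum_{n\geq0}e^{-(2n+1/2)|t|}$, whereas the paper organizes exactly the same regrouping via the factorization $\chi=\chi_0\cdot R$ with $R(s)=s(s-1)$; the computations are identical.
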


\begin{proof}
We want to pair the distributional formula in
theorem \ref{thm:PN-zeta-Riemann} with a test function $\varphi$. By our construction, the
Poisson-Newton formula in theorem \ref{thm:PN-zeta-Riemann} is associated to
$\zeta(s-\frac12)$, which has $\sigma_1=\frac12$. Hence we take $\varphi$ such
that $h(t) \varphi(t) \in \cS$, with $h$ smooth with $h=e^{-(1/2+\epsilon)|t|}$ for $|t|\geq 1$.

Consider the Fourier transform
$$
\hat \varphi (x) =\int_\RR \varphi (t) e^{-ixt} \, dt \, .
$$
Observe that
$$
\hat \varphi (\gamma) =\int_\RR \varphi (t) e^{-i \gamma t} \ dt =\int_{\RR_+} \left (\varphi (t) e^{-i \gamma t} +
\varphi (-t) e^{-i (-\gamma ) t} \right )\, dt  \, .
$$
By the real analyticity of $\zeta(s)$, the set of non-trivial zeros is real symmetric,
$(\gamma )=(-\gamma )$, hence
$$
\sum_\gamma \hat \varphi (\gamma) =\int_{\RR_+} (\varphi (t)  +
\varphi (-t))  \left (\sum_\gamma e^{i \gamma  t}\right ) \, dt  \, .
$$

Thus applying now our Poisson-Newton formula to the test function $\varphi$ we get
$$
\sum_\gamma \hat \varphi (\gamma) + W_0[\varphi]= 2 c_0(\zeta,1/2) \varphi (0) 
-\sum_{p,k\geq 1} (\log p) p^{-k/2} (\varphi (k\log p) +
\varphi (-k\log p) )\, ,
$$
where $W_0[\varphi]$ is the functional
$$
W_0[\varphi]= \int_{\RR }  W_0 (t) \varphi (t) \, dt \, .
$$
We compute more precisely this functional. We have
\begin{align*}
W_0(t) &=-e^{-|t|/2} (W(\chi )(t)+W(\chi)(-t)) \\
&=-e^{-|t|/2} ( W(\chi_0)(t)+W(\chi_0)(-t)+W(R)(t)+W(\bar R)(-t)) \ .
\end{align*}

Note that by our assumptions, $\hat \varphi$ is holomorphic in a neighborhood 
of the strip $|\Im t |\leq 1/2$. Then we have by the general symmetric Poisson-Newton formula 
(or by direct computation)
\begin{align*}
\langle -e^{-|t|/2}  &(W(R)(t)+W(R)(-t)) , \varphi \rangle =-\int_\RR e^{-|t|/2} (e^{|t|} +1)\varphi(t) \, dt\\
&=-\int_\RR 2 \cosh (|t|/2) \varphi (t) \, dt 
=-\int_\RR 2 \cosh (t/2) \varphi (t) \, dt \\
&=-\int_\RR \varphi (t) e^{t/2}\, dt - \int_\RR \varphi (t) e^{-t/2}\, dt 
= -\hat \varphi (i/2) - \hat \varphi (-i/2)  .
\end{align*}
Now, again using 
corollary \ref{cor:symmetric-parameters-general} with $\alpha=1$ and $\beta=1/2$ 
applied to $\chi_0$ that is real analytic, we have
$$
 e^{-|t|/2} (W(\chi_0)(t)+W( \chi_0)(-t)) = 2 c_0(\chi_0 , 1/2) \delta_0 
+\cL^{-1}_{1/2}\left (2 \Re \left (\frac{\chi'_0}{\chi_0}\right ) \right ) .
$$
And using theorem \ref{thm:functional_eq_0structure},
$c_0(\chi_0 , 1/2)+c_0(\zeta , 1/2) =0$. The Poisson-Newton formula applied the test function $\varphi$ is
\begin{align*}
\sum_\gamma \hat \varphi (\gamma) =  &\, \hat \varphi (i/2) + \hat \varphi (-i/2) +\left \langle 
\cL^{-1}_{1/2}\left (2 \Re \left (\frac{\chi'_0}{\chi_0}\right ) \right ), \varphi \right \rangle \\
&-\sum_{p,k\geq 1} (\log p) p^{-k/2} (\varphi (k\log p) +
\varphi (-k\log p) )\, ,
\end{align*}

Now, we have
$$
\frac{\chi'_0 (s)}{\chi_0 (s)} =-\frac12 \log \pi +\frac12 \frac{\Gamma'(s/2)}{\Gamma(s/2)} \ ,
$$
so
$$
\left \langle 
\cL^{-1}_{1/2}\left (2 \Re \left (\frac{\chi'_0}{\chi_0}\right ) \right ), \varphi \right \rangle =
\frac{1}{2\pi} \int_\RR \Psi(t) \hat \varphi(t) \ dt \ ,
$$
where 
$$
\Psi(t) =-\log \pi +\Re \left ( \frac{\Gamma'}{\Gamma} (1/4+it/2)\right ) \ .
$$

Thus we recover the classical form of the Explicit formula given in the statement. 
\end{proof}
Historically this form is due 
to Barner that gave a new form of the Weil functional. Barner's derivation is based on an integral formula, Barner formula, that 
can be directly derived from our general Poisson-Newton formula.

\begin{remark}\label{rem:functional-equation}
  The functional equation for $\zeta$ only serves in theorem \ref{thm:PN-zeta-Riemann} to compute $c_0(\zeta,1/2)$.
Without it, we obtain theorem \ref{thm:a} except for the value of the constant in the function $\Psi(t)$.
\end{remark}

\begin{remark} \textbf{(General Explicit Formulas)}
The derivation given  
of the classical distributional Explicit Formula is general 
and applies to any Dirichlet series of order $1$ with the required conditions. In this sense the Poisson-Newton formula can be seen as 
the general Explicit Formula associated to a Dirichlet series. The structure at $0$ needs to be computed in general. 
But when we have a functional equation, one can apply the Poisson-Newton formula with the parameter 
well chosen so that the structure at $0$ vanishes (using theorem \ref{thm:functional_eq_0structure}).
The divisor on the left cone gives the general ``Weil functional'' and again, by application of the general 
Poisson-Newton formula with parameters 
we get a general Barner integral formula 
for the functional. Thus we get a general Explicit Formula with the same structure as for the classical one for the Riemann zeta function.
\end{remark}

\section{Selberg Trace formula} \label{sec:selberg}

It is well known that Selberg trace formula was developed 
by analogy with the Explicit Formulas in analytic number theory 
and that this was the original motivation by Selberg (see \cite{S}, \cite{CV}). 
In this section we explain this folklore analogy by showing that 
Selberg Trace Formula results from the Poisson-Newton formula applied to the Selberg zeta function. 
The approach is very similar 
to that of the previous section and we have a unified treatment of both formulas. 
The only relevant difference is that Selberg zeta function 
is of order $2$.

We consider a compact Riemannian surface $X$ of genus $h\geq 2$ with a metric of constant negative
curvature. Let $\cP$ be
the set of primitive geodesics. The Selberg zeta function is defined in the half plane $\Re s>1$ by
the Euler product
$$
\zeta_X(s)=\prod_{p\in \cP}\prod_{k\geq 0} \left ( 1-e^{\tau(p)(s+k)}\right ) ,
$$
where $\tau(p)$ is the length of the geodesic $p$.

We have 
\begin{align*}
-\log \zeta_X(s) &= \sum_p \sum_{k\geq 0} \sum_{l\geq 1} \frac1l \ e^{-\tau(p)(s+k) l}
=\sum_{p , l\geq 1} \frac1l\  e^{-\tau(p) ls} \ \frac{1}{1-e^{-\tau(p) l}}\\
&=\sum_{p , l\geq 1} \frac1l \ e^{\tau(p) l/2} \ \frac{1}{2 \sinh (\tau(p) l/2)}\  e^{-\tau(p) l s} \ .
\end{align*}
Thus we get the coefficients
$$
b_{p,l} =\frac1l \ e^{\tau(p) l/2} \ \frac{1}{2 \sinh (\tau(p) l/2)} \ ,
$$
and the frequencies
$$
\langle {\boldsymbol {\lambda }} , (p,l)\rangle = \lambda_{p,l} = \tau(p) l \ .
$$

One of the fundamental results of the theory is that $\zeta_X$ has a meromorphic extension 
to the complex plane of order $2$, exponent of convergence $d=3$, thus genus $g=2$ 
(see \cite{MPM-genus}), has a functional equation with $\sigma^*=1/2$, 
and its zeros are the following (see \cite[p.\ 129]{V}):

\begin{itemize}
 \item Trivial zeros at $s=-k$ with $k=0,1,2,\ldots$ with multiplicity $2(h-1)(2k+1)$.
 \item Non-trivial zeros $s=1/2\pm i \gamma_n$, $n=0,1,2,\ldots$, where $1/4+\gamma_n^2$ are the
eigenvalues of the positive Laplacian $-\Delta_X$ on $X$ counted with multiplicity. The lowest eigenvalue
$0$ yields two zeros, $s=1$ that is simple, and the trivial zero $s=0$ with multiplicity $2(h-1)$ (we 
exclude the case of  $1/4$ as eigenvalue). 
\end{itemize}

For $n<0$ write $\gamma_n=-\gamma_{-n}$. In the sense of section  \ref{sec:functional},
$\zeta_X$ satisfies a functional equation with $\sigma^*=1/2$ and $\zeta_X=\chi \, g$, where
$g(2\sigma^*-s)=g(s)$. The Newton-Cramer distribution decomposes as
$$
W(\zeta_X)=W(\chi) + W(g)\ ,
$$
where $W(\chi)$ is the contribution of the trivial zeros and $W(g)$ of the non-trivial ones. 
We compute on $\RR^*$ with $\beta =1/2$ and $\sigma=1/2$,
\begin{align*}
\widehat W(\chi ) (t) &=\sum_{n\in \ZZ}  2(h-1) (2n+1) e^{(-n-1/2)|t|} 
 \\ &=4(h-1) \sum_{n\geq 0} (n+1/2) e^{-(n+1/2) |t|} \\
&=-4(h-1)\frac{d}{d|t|} \left (\frac{1}{2\sinh (|t|/2)} \right )
=(h-1) \frac{\cosh(t/2)}{(\sinh(t/2))^2} \ , \\
\widehat W( g) (t) &=  \sum_{n\in \ZZ} e^{i\gamma_n |t|}=2 \sum_{n\geq 0}\cos(\gamma_n t) \ . 
\end{align*}

Now we apply the symmetric Poisson-Newton formula with parameter (corollary \ref{cor:thm:symmetric_parameters})
with $\beta=1/2$, and we get
$$
 \widehat W (\zeta_X)=\widehat W(\chi ) + \widehat W( g)  = 2 c_0' (\zeta_X , 1/2) \delta_0 + \sum_{p, l\in \ZZ^*} 
| \langle {\boldsymbol {\lambda }} , (p,l)\rangle |  
e^{-| \langle {\boldsymbol {\lambda }} , (p,l)\rangle |/2} \,  b_{p,|l|} \,
\delta_{\langle {\boldsymbol {\lambda }} , (p,l)\rangle} \ .
$$

To compute $c_0'(\zeta_X , 1/2)$, we use the Hadamard factorisation of $\zeta_X$. We are assuming that $s=1/2$ is not part of the divisor ($n(1/2)=0$). 
According to (\ref{eqn:psi-psi2}) with $\sigma=1/2$, we write $P_{\zeta_X,1/2}(s)=c_0+c_1s$ with
\begin{align*}
 - (\log \zeta_X)' (s)= & 
 c_0+c_1s - \sum_{n=0}^{+\infty}2(h-1)(2n+1) \frac{(s-1/2)^2}{(-n-1/2)^2} \frac{1}{s+n} \\
&-
 \sum_{n=0}^\infty \left( \frac{(s-1/2)^2}{(i\gamma_n)^2} \frac1{s-1/2-i\gamma_n} +
\frac{(s-1/2)^2}{(-i\gamma_n)^2} \frac1{s-1/2+i\gamma_n}\right).
\end{align*}
Equivalently,
\begin{align*}
 - (\log \zeta_X)' (s+1/2)= & 
 c_0+c_1(s+\frac12) +  \sum_{n=0}^{+\infty} (2-2h)(2n+1) \frac{s}{-n-1/2}\left (\frac{1}{s+n+1/2} -\frac{1}{n+1/2} \right ) \\
 &- 2s\sum_{n=0}^\infty \left( 
\frac{1}{s^2+\gamma_n^2}-\frac{1}{\gamma_n^2} \right).
\end{align*}

Now, by formulas (4.9), (4.12) and (4.17) in \cite{CV}
(formula (4.17) follows from formula (2.27) therein which can be obtained from the 
resolvent of the elliptic operator $-\Delta_X$), we have
\begin{align*}
 - (\log \zeta_X)' (s+1/2) &=(2-2h) 2s (\log \Gamma)' (s+1/2)  - 2b s- 2s\sum_{n=0}^\infty \left( 
\frac{1}{s^2+\gamma_n^2}-\frac{1}{\gamma_n^2} \right),
\end{align*}
for some $b\in\CC$.
Using (\ref{psi-psi}), we have
\begin{align*}
 (\log \Gamma)' (s+1/2)&
 =-\frac1{s+1/2} -\gamma +\sum_{n=1}^{+\infty} \left (\frac{1}{n} -\frac{1}{s+n+1/2} \right ).
\end{align*}
Multiplying by $(2-2h) 2s$ and rearranging, we get
\begin{align*}
(2-2h) 2s (\log \Gamma)' (s+1/2) =& (2-2h)4s-2s(2-2h)\gamma+
2s(2-2h)\sum_{n=1}^{+\infty} \left( \frac1n-\frac1{n+1/2}\right) \\
&+ \sum_{n=0}^{+\infty} (2-2h)(2n+1) \frac{s}{-n-1/2}\left (\frac{1}{s+n+1/2} -\frac{1}{n+1/2} \right ) .
\end{align*}
Putting everything together, this means that
\begin{align*}
c_0+c_1(s+1/2)=b's,
\end{align*}
for some $b'\in \CC$. This means that $c_1=-2c_0$ and hence $P_{\zeta_X,1/2}(s)=c_1(s-1/2)$.

Now we use Corollary  \ref{cor:thm:symmetric_parameters} applied to $\zeta_X$ with
$\alpha=1,\beta=1/2$ (in which case we have $c_1'=c_1=b'$ and $c_0'=c_0+c_1\beta=0$), obtaining
$$
 \sum_{\gamma} e^{i\gamma t} + (h-1) \frac{\cosh(t/2)}{(\sinh(t/2))^2} 
=  2 \sum_{p, l\in \ZZ^*} 
\frac{\tau(p)}{4 \sinh(\tau(p) |l|/2)} \,
\delta_{\tau(p) l} \ .
$$
This yields the classical Selberg Trace Formula as stated in \cite{CV}.

\begin{theorem} {\textbf {(Selberg Trace Formula)}} We have on $\RR$,
$$ 
\frac12 \sum_{\gamma} e^{i\gamma t} =-\frac12 (h-1) \frac{\cosh(t/2)}{(\sinh(t/2))^2} 
+ \sum_{p, l\in \ZZ^*} 
\frac{\tau(p)}{4 \sinh(\tau(p) |l|/2)} \,
\delta_{\tau(p) l} \ .
$$
\end{theorem}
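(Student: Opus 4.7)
The plan is to apply Corollary~\ref{cor:thm:symmetric_parameters} (the symmetric Poisson--Newton formula with parameters) to the Selberg zeta function $\zeta_X$, with $\alpha=1$, $\beta=1/2$ and $\sigma=1/2$, and then evaluate both sides by direct computation; the theorem will follow by dividing the resulting identity by $2$ and rearranging. Since $\zeta_X$ is meromorphic on $\CC$ of order $2$ (convergence exponent $d=3$, genus $g=2$) and is a Dirichlet series on $\Re s>1$, the corollary applies and produces a distributional identity on $\RR$ whose geodesic (right-hand) side is determined by the coefficients $b_\bk$ of $-\log \zeta_X$, and whose spectral (left-hand) side is $e^{-|t|/2}\widehat{W}_\sigma(\zeta_X)(t)$, together with a contact term $2c_0'(\zeta_X,1/2)\delta_0$ at the origin.

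For the geodesic side, I would take the logarithm of the Euler product and triple-sum over primitive geodesics $p$, over $k\ge 0$, and over $l\ge 1$, reading off the Dirichlet coefficients $b_{p,l}=\frac{1}{l}\cdot\frac{e^{\tau(p)l/2}}{2\sinh(\tau(p)l/2)}$ attached to the frequency $\tau(p)l$. Substituting these into the right-hand side of Corollary~\ref{cor:thm:symmetric_parameters} produces exactly the geodesic atoms $\frac{\tau(p)}{2\sinh(\tau(p)|l|/2)}\,\delta_{\tau(p)l}$, up to an overall factor of $2$ that will be absorbed in the final halving.

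For the spectral side, I would split $\Div(\zeta_X)=\Div(\chi)+\Div(g)$ into the trivial part (zeros at $-k$, $k\ge 0$, with multiplicities $2(h-1)(2k+1)$) and the non-trivial part (at $1/2\pm i\gamma_n$). Additivity of $\widehat W$ gives $\widehat W(g)(t)=\sum_\gamma e^{i\gamma|t|}$ for the non-trivial contribution, while the trivial contribution, after multiplication by $e^{-|t|/2}$, collapses to the geometric series $4(h-1)\sum_{n\ge 0}(n+1/2)e^{-(n+1/2)|t|}=-4(h-1)\frac{d}{d|t|}\bigl(\frac{1}{2\sinh(|t|/2)}\bigr)=(h-1)\frac{\cosh(t/2)}{\sinh^2(t/2)}$, producing the Weil--type remainder term that appears on the left of the target identity.

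The main obstacle is pinning down the contact constant $c_0'(\zeta_X,1/2)$, since Theorem~\ref{thm:functional_eq_0structure} was established only for convergence exponent $d=2$, whereas here $d=3$ and the discrepancy polynomial $P_{\zeta_X,1/2}(s)=c_0+c_1 s$ is potentially genuinely linear. To handle this, I would take the logarithmic derivative of the Hadamard factorization evaluated at $s+1/2$ and match the resulting sums over trivial and non-trivial zeros against the classical resolvent identity expressing $(\log\zeta_X)'(s+1/2)$ in terms of $(\log\Gamma)'(s+1/2)$ and the spectral sum $\sum_n(s^2+\gamma_n^2)^{-1}$ (formulas (4.9), (4.12), (4.17) of Cartier--Voros~\cite{CV}), and then use the digamma identity~(\ref{psi-psi}) to cancel the trivial-zero contributions term by term. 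What remains forces $c_0+c_1(s+1/2)$ to be a pure multiple of $s$, i.e.\ $c_1=-2c_0$, so the base-change rule $c_0'=c_0+c_1\beta$ with $\beta=1/2$ gives $c_0'=0$. Assembling the three ingredients into Corollary~\ref{cor:thm:symmetric_parameters} with no contact term and halving yields the stated Selberg trace formula.
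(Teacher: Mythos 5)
Your proposal is correct and follows essentially the same route as the paper: you apply Corollary~\ref{cor:thm:symmetric_parameters} with $\alpha=1$, $\beta=\sigma=1/2$, extract the coefficients $b_{p,l}$ from the triple-summed logarithm of the Euler product, split $\widehat W(\zeta_X)$ into the trivial part (summed to the $\cosh/\sinh^2$ kernel) and the non-trivial part (yielding $\sum_\gamma e^{i\gamma|t|}$), and pin down the contact term by matching the Hadamard logarithmic derivative at $s+1/2$ against the Cartier--Voros resolvent identity and the digamma series to force $c_1=-2c_0$, hence $c_0'=c_0+c_1/2=0$. In particular you correctly note that Theorem~\ref{thm:functional_eq_0structure} does not apply directly because $d=3$ here, which is exactly the point the paper handles by this direct computation.
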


\begin{remark}
We can now manipulate the integral expression for the ``Weil functional'' \`a la Barner, using the 
general Poisson-Newton formula as we have done in the previous section. These computations will be done elsewhere. 
\end{remark}

\begin{remark} \textbf{(Gutzwiller Trace Formula)}
The Selberg trace formula is just a particular case of the Gutzwiller Trace formula in Quantum Chaos (see \cite{GU}). 
The Gutzwiller Trace Formula, that is the central formula in quantum chaos, 
results from the application of the Poisson-Newton formula to the dynamical zeta function of the Dynamical System
when this zeta function has an analytic extension to the whole complex plane. 
Thus non-trivial zeros are related to the quantum energy levels and the frequencies to the classical periodic orbits. 
This will be treated elsewhere.
\end{remark}

\noindent \textbf{Acknowledgements.} We thank warmly J. Lagarias for a long list of suggestions to
a previous version of this paper. We are grateful to D. Barsky, P. Cartier and to the referee for useful comments.
First author partially supported through Spanish MICINN grant MTM2010-17389.

\end{document}